\newcommand{\be}{\begin{equation}}
\newcommand{\ee}{\end{equation}}
\newcommand{\ba}{\begin{array}}
\newcommand{\ea}{\end{array}}
\newcommand{\bea}{\begin{eqnarray*}}
\newcommand{\eea}{\end{eqnarray*}}
\newcommand{\bean}{\begin{eqnarray}}
\newcommand{\eean}{\end{eqnarray}}
\newtheorem{theorem}{Theorem}[section]
\newtheorem{lemma}{Lemma}[section]
\newtheorem{remark}{Remark}[section]
\newtheorem{proposition}{Proposition}[section]
\newtheorem{definition}{Definition}[section]
\theoremstyle{remark}
\newcommand{\lmref}[1]{Lemma \ref{#1}}
\newcommand{\defref}[1]{Definition \ref{#1}}
\renewcommand{\raggedright}{\leftskip=0pt \rightskip=0pt plus 0cm}
\theoremstyle{definition}
\newcounter{proofc}
\renewcommand\theproofc{(\arabic{proofc})}
\DeclareRobustCommand\stepproofc{\refstepcounter{proofc}\theproofc}
\begin{document}

\title{Sharp Decay Estimates for the Vlasov-Poisson and Vlasov-Yukawa Systems with Small Data}
\author[1]{Xianglong Duan}
\affil[1]{\small Laboratoire de Math\'ematiques d'Orsay, Univ Paris-Sud 11, CNRS, Universit\'e Paris-Saclay, F-91405, Orsay, France (\href{mailto:xianglong.duan@math.u-psud.fr}{xianglong.duan@math.u-psud.fr})}
\date{}
\maketitle

\begin{abstract}
In this paper, we present sharp decay estimates for small data solutions of the following two systems: the Vlasov-Poisson (V-P) system in dimension 3 or higher and the Vlasov-Yukawa (V-Y) system in dimension 2 or higher. We rely on a modification of the vector field method for transport equation as developed by Smulevici in 2016 for the Vlasov-Poisson system. Using the Green's function in particular to estimate bilinear terms, we improve Smulevici's result by requiring only $L^1_{x,v}$ bounds for the initial data and its derivatives. We also extend the result to the Vlasov-Yukawa system. 
\\
\\
{\bf{Keywords:}} Small data solution, Vlasov-Poisson system, Vlasov-Yukawa system, vector field method, decay estimate.
\end{abstract}

\tableofcontents


\section{Introduction}


In this paper, we mainly study the following Vlasov-Poisson (V-P) system for $n\geq 3$,
\begin{gather}
\partial_t f + v\cdot\nabla_x f + \mu \nabla_x \phi \cdot\nabla_v f=0,\quad x,v\in\mathbb{R}^n,\label{eq.vlasov}\\
\triangle \phi=\rho (f), \label{eq.poisson}\\  
f(t=0)=f_0(x,v).
\end{gather}
where $\mu=\pm 1$ corresponding to an attractive or repulsive force and $\rho(f)$ is given by
$$\rho(f)(t,x)=\int_{\mathbb{R}^n} f(t,x,v){\rm d}v.$$
The function $f$ represents the density of particles and is therefore non-negative initially\footnote{In the small data regime considered in this paper, the sign of $f$ will be irrelevant. }. 
With a slight modification of the proof, our method also applies to the Vlasov-Yukawa\footnote{See \cite{Yukawa} for the original derivation of the Yukawa interaction.} (V-Y) system in dimension $n\geq2$
\begin{gather}
\partial_t f + v\cdot\nabla_x f + \mu \nabla_x \phi \cdot\nabla_v f=0,\quad x,v\in\mathbb{R}^n,\\
\triangle \phi -m^2\phi=\rho (f),\label{eq.screened.poisson}\\  
f(t=0)=f_0(x,v).
\end{gather}
where the Poisson equation \eqref{eq.poisson} is replaced by the screened Poisson equation \eqref{eq.screened.poisson} as a short-range correction.	Here $m>0$ represents the mass of particles which is assumed to be a positive constant. Without loss of generality, we will assume $m=1$ throughout  this paper. In both of the two systems, V-P and V-Y, the function $\phi(t,x)$ which solves either the Poisson equation \eqref{eq.poisson} or the screened Poisson equation \eqref{eq.screened.poisson} can be expressed explicitly in the form
$$\phi(t,x) = G_m*\rho(f)=\int_{\mathbb{R}^n}G_m(x-y)\rho(f)(t,y){\rm d}y,$$
where $G_m$ is the Green's function with respect to the operator $\triangle$ ($m=0$) or $\triangle -\mathrm{Id}$ ($m=1$). More precisely, for the V-P system in dimension $n\geq 3$, $G_0$ has the form
$$G_0(x)=\frac{c_n}{|x|^{n-2}},$$ 
where $c_n$ is a constant depending only on the dimension $n$. For the V-Y system, $G_1(x)$ has the form
$$G_1(x)=\frac{c'_n}{|x|^{\frac{n}{2}-1}}K_{\frac{n}{2}-1}(|x|),$$
where $c'_n$ is another constant and $K_{\nu}(r)$ is the modified Bessel function of the second kind, which has an integral expression of the form, when $r>0$ (see p. 181, \cite{Bessel}),
$$K_{\nu}(r)=\int_0^{\infty}e^{-r\cosh \lambda} \cosh (\nu \lambda){\rm d}\lambda.$$ 


The aim of this paper is to derive sharp asymptotics on the solutions under a small data assumption but optimal decay on the initial data. More precisely, our assumptions on the initial data implies that $\int_x \int_v f( t=0, x, v){\rm d}x {\rm d}v$ is finite, but we do not need more decay in $(x,v)$ initially. Similar assumptions are made for derivatives of $f$. 

The main analytic novelty of the paper relies on using the explicit expression of $\phi(t,x)$ thanks to the Green's function formula. Using this explicit representation, we can prove bilinear estimates for quantities of the form $|| \nabla_x Z^\gamma(\phi)Z^\beta(f)||_{L^1_{x,v}}$, where $Z^\gamma$ and $Z^\beta$ are differential operators of order $|\gamma|$ and $|\beta|$. Compared to for instance \cite{Smulevici.V-P}, we then avoid the use of the Calder\'on-Zygmund inequality. In particular, this allows us to avoid the use of $v$-weighted $L^p$ norms, which in \cite{Smulevici.V-P} appeared due to the failure of the Calder\'on-Zygmund inequality in $L^1$. A second important difference compared to \cite{Smulevici.V-P} is a different treatment of the higher order estimates in the low dimensional case. Due to the use of modified vector field, one typically encounters a loss of derivative in the estimates. In \cite{Smulevici.V-P}, this loss was recovered using the elliptic nature of the Poisson equation and the higher order estimates relied again on the Calder\'on-Zygmund inequality. Here, we write and exploit the higher order commutation formula differently to avoid the loss of derivatives, see Lemma \ref{lm.mv.YY}. 


\subsection{The Main Results}
Our main theorems can then be stated as follows.

\begin{theorem}[High Dimensional Case]\label{thm.high}
	Let $n \geq 4$ in the V-P case and $n \ge 3$ in the V-Y case. Let $N \geq 2n$. Consider initial data $f_0: \mathbb{R}^n_x \times \mathbb{R}^n_v$ satisfying 
	\begin{equation}
		\mathcal{E}_N[f_0]:= \sum_{|\alpha|\leq N, Z^{\alpha}\in\gamma^{|\alpha|}} \|Z^{\alpha}f_0\|_{L^1(\mathbb{R}^n_x\times\mathbb{R}^n_v)} \leq \epsilon,
	\end{equation}
	where $Z^\alpha$ is a combination of $|\alpha|$ vector fields in $\gamma^{|\alpha|}$ as defined in \defref{def.vector.fields}. Then, there exists $\epsilon_0>0$, such that if $\epsilon \le \epsilon_0$, then the solution $(f,\phi)$ arising from $f_0$ is globally defined and satisfies the uniform bounds
	
	\begin{equation}\label{hd}
	\mathcal{E}_N[f(t)]\leq 2\epsilon.
	\end{equation}
	
\end{theorem}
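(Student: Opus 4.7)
The plan is to run a standard bootstrap (continuity) argument on the energy $\mathcal{E}_N[f(t)]$. One assumes that on a maximal time interval $[0,T^\ast)$ the bound $\mathcal{E}_N[f(t)]\le 2\epsilon$ holds, and seeks to improve it to $\mathcal{E}_N[f(t)]\le \tfrac{3}{2}\epsilon$; combined with local well-posedness for smooth small data, this forces $T^\ast=+\infty$ and yields \eqref{hd}.

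To control $\|Z^\alpha f(t)\|_{L^1_{x,v}}$ for each $|\alpha|\le N$, I would first commute vector fields through the transport equation. Because the nonlinear structure mixes $\nabla_x\phi$ and $\nabla_v f$, a naive commutation loses a $v$-derivative, which is exactly the issue addressed by Lemma \ref{lm.mv.YY}: using that higher-order commutation formula, one writes
\[
(\partial_t + v\cdot\nabla_x + \mu\nabla_x\phi\cdot\nabla_v)\,Z^\alpha f \;=\; \sum_{|\beta|+|\gamma|\le|\alpha|} C_{\alpha,\beta,\gamma}\,\nabla_x Z^\gamma\phi\cdot\nabla_v Z^\beta f,
\]
with no loss of derivative on either factor. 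Integrating in $(x,v)$ and using that the left-hand side is a divergence-free transport in phase space gives the fundamental energy inequality
\[
\frac{d}{dt}\|Z^\alpha f\|_{L^1_{x,v}} \;\lesssim\; \sum_{|\beta|+|\gamma|\le|\alpha|} \bigl\|\nabla_x Z^\gamma\phi\cdot\nabla_v Z^\beta f\bigr\|_{L^1_{x,v}}.
\]

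Next I would estimate each bilinear term by inserting the Green's function representation $\phi=G_m\ast\rho(f)$, which is the key analytic novelty advertised in the introduction. The sum is split into two regimes. When $|\gamma|\le N/2$, so that $Z^\gamma\phi$ is of low order, one derives pointwise decay for $\nabla_x Z^\gamma\phi$ directly from the convolution with $\nabla G_m$ (a Riesz-type kernel $|x|^{-(n-1)}$ for V-P, or a Bessel kernel with exponential tail for V-Y), using the bootstrap bound on $\mathcal{E}_{\lfloor N/2\rfloor+k}[f(t)]$ and Klainerman–Sobolev-type inequalities for transport adapted to the vector fields $\gamma^{|\alpha|}$; this is paired with $\|\nabla_v Z^\beta f\|_{L^1_{x,v}}$ from the bootstrap. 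When $|\beta|\le N/2$, pointwise decay is applied instead to $\nabla_v Z^\beta f$, and the high-order factor $\nabla_x Z^\gamma\phi$ is absorbed by Fubini directly against $Z^\beta f$ through the convolution representation, bypassing any Calderón–Zygmund step.

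In every subcase the outcome is a bound of the form $C\,\mathcal{E}_N[f(t)]^2\,(1+t)^{-\delta_n}$ with $\delta_n>1$ integrable in $t$; in V-P this integrability requires $n\ge 4$ (the critical exponent from the kernel and the sharp $t$-decay of $\rho$), while in V-Y the exponential decay of $K_{n/2-1}$ already closes the argument for $n\ge 3$. Integrating in time yields $\mathcal{E}_N[f(t)]\le \epsilon + C\epsilon^2$, which is smaller than $\tfrac{3}{2}\epsilon$ for $\epsilon\le \epsilon_0$ small enough, closing the bootstrap. The main obstacle I anticipate is the bilinear estimate in the \emph{top-order} borderline case $|\gamma|=|\alpha|$ (or $|\beta|=|\alpha|$), where no derivative is available to extract pointwise decay from the high-order factor; here the Green's function representation must be used to trade a spatial derivative on $\phi$ for the known decay of $\rho(f)$, and the condition $N\ge 2n$ is exactly what ensures the low-order factor has enough commuted vector fields to produce a time-integrable pointwise decay.
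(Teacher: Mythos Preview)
Your bootstrap skeleton and the idea of splitting the bilinear term according to which factor has low order are correct and match the paper. However, two concrete steps are mishandled. First, the reference to Lemma~\ref{lm.mv.YY} is misplaced: that lemma concerns the \emph{modified} vector fields $Y^\alpha$ in the low-dimensional case and plays no role in Theorem~\ref{thm.high}. The commutator you need is simply Lemma~\ref{lm.commu.EqT}, which gives $[T_\phi,Z^\alpha]f=\sum C\,\partial_{x^i}Z^\gamma\phi\,\partial_{v^j}Z^\beta f$ with $|\beta|\le|\alpha|-1$, $|\gamma|+|\beta|\le|\alpha|$. There is no magic that removes the $\partial_{v^j}$; instead the paper writes $\partial_{v^j}=(t\partial_{x^j}+\partial_{v^j})-t\partial_{x^j}$, converting the $v$-derivative into vector fields of $\gamma$ at the cost of a factor $(1+t)$. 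Only after this conversion does the energy $\mathcal{E}_N$ control the relevant term. Your sentence ``paired with $\|\nabla_v Z^\beta f\|_{L^1_{x,v}}$ from the bootstrap'' is therefore not right as written: the bootstrap does not bound $\nabla_v Z^\beta f$, it bounds $(1+t)^{-1}\|\nabla_v Z^\beta f\|_{L^1}$ after this substitution.

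Second, in the regime $|\beta|\le N/2$ you write that ``pointwise decay is applied to $\nabla_v Z^\beta f$''. There is no pointwise decay available for the distribution function itself; the Klainerman--Sobolev inequalities in this setting give decay only for the \emph{velocity averages} $\rho(|g|)(t,x)$. What the paper actually does (Lemma~\ref{lm.vp.phiZ}) is, after the $(1+t)$ conversion above, integrate in $v$ to obtain $\int_x|\nabla_x Z^\gamma\phi|\,\rho(|Z^\beta f|)\,dx$, insert the Green's function representation $\nabla_x Z^\gamma\phi\sim \nabla G\ast\rho(Z^{\gamma'}f)$, and then apply Klainerman--Sobolev to whichever velocity average $\rho(|Z^{\gamma'}f|)$ or $\rho(|Z^\beta f|)$ has order $\le N-n$ (this is where $N\ge 2n$ enters). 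The convolution integral is then controlled by the elementary Lemma~\ref{lm.decay}. The net decay is $(1+t)^{-(n-1)}$ in V-P (resp.\ $(1+t)^{-n}$ in V-Y), so after the $(1+t)$ loss one gets $\|T_\phi Z^\alpha f\|_{L^1}\lesssim \epsilon^2(1+t)^{-(n-2)}$ (resp.\ $(1+t)^{-(n-1)}$), integrable precisely when $n\ge4$ (resp.\ $n\ge3$). Once you insert these two missing steps, your outline becomes the paper's proof.
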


\begin{theorem}[Low dimensional Case]\label{thm.low}
	Let $n=3$ in the V-P case and $n=2$ in the V-Y case. 
	Let $N \geq 2n+3$. Consider initial data $f_0: \mathbb{R}^n_x \times \mathbb{R}^n_v$ satisfying 
	\begin{equation}
	\mathcal{E}_N[f_0]:= \sum_{|\alpha|\leq N, Y^{\alpha}\in\gamma_m^{|\alpha|}} \|Y^{\alpha}f_0\|_{L^1(\mathbb{R}^n_x\times\mathbb{R}^n_v)} \leq \epsilon,
	\end{equation}
	where $Y^\alpha$ is a combination of $|\alpha|$ modified vector fields in $\gamma_m^{|\alpha|}$  as defined in \defref{def.modified}. Then, there exists $\epsilon_0>0$, such that if $\epsilon \leq \epsilon_0$, then the solution $(f,\phi)$ arising from $f_0$ is globally defined and satisfies the uniform bounds
	\begin{equation}\label{lm.d}
	\mathcal{E}_N [f(t)] \leq 2\epsilon. 
	\end{equation}
\end{theorem}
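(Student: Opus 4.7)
The plan is to close a bootstrap argument on the energy $\mathcal{E}_N[f(t)]$. Granted local existence and the continuity of $t \mapsto \mathcal{E}_N[f(t)]$, I would work on the maximal interval $[0,T^\ast)$ on which $\mathcal{E}_N[f(t)] \leq 2\epsilon$ and improve this to $\mathcal{E}_N[f(t)] \leq \tfrac{3}{2}\epsilon$, which by a standard continuation argument forces $T^\ast = \infty$.

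The first step is to commute the Vlasov equation with each modified vector field $Y^\alpha \in \gamma_m^{|\alpha|}$ for $|\alpha| \leq N$, obtaining a transport equation of the form
\begin{equation*}
\left( \partial_t + v\cdot\nabla_x + \mu\,\nabla_x\phi\cdot\nabla_v \right) Y^\alpha f \;=\; \sum_{|\beta|+|\gamma| \leq |\alpha|} C_{\beta,\gamma}\, Z^\gamma(\nabla_x\phi) \cdot Y^\beta f,
\end{equation*}
whose right-hand side collects the commutator contributions. Because the fields in $\gamma_m$ carry $\phi$-dependent corrections, iterating the first-order commutation at top order would cost one derivative of $\phi$; this loss is precisely what \lmref{lm.mv.YY} removes, by rearranging the higher-order formula so that the highest-order $\phi$-factor is never paired with the highest-order $f$-factor.

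Multiplying by $\mathrm{sgn}(Y^\alpha f)$ and integrating in $(x,v)$ then reduces the energy inequality to controlling
\begin{equation*}
\int_0^t \bigl\| Z^\gamma(\nabla_x\phi)\cdot Y^\beta f \bigr\|_{L^1_{x,v}}\, \mathrm{d}s \;\leq\; \int_0^t \|Z^\gamma(\nabla_x\phi)(s,\cdot)\|_{L^\infty_x}\, \|Y^\beta f(s)\|_{L^1_{x,v}}\, \mathrm{d}s
\end{equation*}
for each admissible pair $(\beta,\gamma)$. For the $L^\infty_x$ factor I would replace $\nabla_x\phi$ by its Green's-function representation $\nabla_x G_m \ast \rho(f)$, using the explicit decay of $G_m$ (scale-invariant for V-P, exponentially decaying via $K_\nu$ for V-Y) to obtain a bound in terms of suitable norms of derivatives of $\rho$ with no Calder\'on-Zygmund loss. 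This is the device that bypasses the $v$-weighted $L^p$ norms used in \cite{Smulevici.V-P}.

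The pointwise decay of $Z^\gamma(\nabla_x\phi)$ fed into the pairing above is obtained from a Klainerman-Sobolev inequality adapted to the $\gamma_m$-vector fields: $L^1_{x,v}$ control of $Y^\delta f$ for $|\delta| \leq n$ gives $\rho(t,\cdot) = O(t^{-n})$, and the Green's-function convolution propagates a comparable rate to the lower-order derivatives of $\nabla_x\phi$ that actually appear. The hypothesis $N \geq 2n+3$ then simultaneously buys the $n$ derivatives required for this Sobolev step and the extra margin demanded by the modified vector-field hierarchy. The main obstacle I anticipate is the top-order estimate in the critical dimensions $n=3$ (V-P) and $n=2$ (V-Y), where the naive time integral is only borderline: the combined use of \lmref{lm.mv.YY} (no derivative loss) and the Green's-function bilinear bound (trading a borderline $\|\nabla_x\phi\|_{L^\infty}$ for an integrable kernel against $\rho$) should reduce the top-order estimate to a Gr\"onwall-type inequality with integrable-in-time right-hand side, which then yields the desired improvement from $2\epsilon$ to $\tfrac{3}{2}\epsilon$ and closes the bootstrap.
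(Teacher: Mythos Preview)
Your outline misses a structural feature of the modified vector fields that the paper's argument is built around, and without it the bootstrap cannot close. The commutator $[T_\phi,Y^\alpha]$ is \emph{not} of the form $\sum_{|\beta|+|\gamma|\le|\alpha|} C_{\beta,\gamma}\,Z^\gamma(\nabla_x\phi)\cdot Y^\beta f$ with constant coefficients. Because each $Y^i=Z^i-\sum_k\varphi_k^i\partial_{x^k}$ carries the $\phi$-dependent coefficients $\varphi_k^i$ (themselves solving $T_\phi(\varphi_k^i)=\mu t\,\partial_{x^k}(Z^i\phi+c_i\phi)$), the higher-order commutation formula reads
\[
[T_\phi,Y^\alpha]=\sum_{d}\sum_{i}\sum_{|\gamma|,|\beta|\le|\alpha|} P^{\alpha,i}_{d\gamma\beta}(\varphi)\,\partial_{x^i}Z^\gamma(\phi)\,Y^\beta,
\]
where the $P^{\alpha,i}_{d\gamma\beta}(\varphi)$ are multilinear forms in $Y$-derivatives of the $\varphi$'s (see \eqref{eq.mv.tphi}). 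Consequently a bootstrap on $\mathcal{E}_N[f]$ alone is insufficient: the paper runs a four-component bootstrap, adding pointwise controls $|Y^\alpha\varphi|\lesssim\epsilon^{1/2}(1+\log(1+t))$ for $|\alpha|\le N-n-2$, $|Y^\alpha\nabla_x\varphi|\lesssim\epsilon^{1/2}$ for $|\alpha|\le N-n-3$, and $|\nabla_xZ^\alpha\phi|\lesssim\epsilon^{1/2}(1+t)^{-2}$ for $|\alpha|\le N-n-1$ (assumptions \eqref{eq.bootstrap.1}--\eqref{eq.bootstrap.4}), and improves all four.

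This also changes what \lmref{lm.mv.YY} is for. It is not a rearrangement of the commutator; it is an a priori $L^1$ estimate on the \emph{products} $Y^\alpha(\varphi)\,Y^\beta(f)$ and $Y^\alpha(\partial_x\varphi)\,Y^\beta(f)$ in the regime where $|\alpha|$ exceeds $N-n-2$ so that the pointwise bootstrap on $\varphi$ is unavailable. The lemma is proved by applying the approximate conservation law directly to the product $Y^\alpha(\varphi)Y^\beta(f)$, computing $T_\phi$ of that product, and closing a coupled Gr\"onwall system for the quantities $\mathcal F(t)=\sum\|Y^\alpha(\varphi)Y^\beta(f)\|_{L^1}$ and $\mathcal G(t)=\sum\|Y^\alpha(\partial_x\varphi)Y^\beta(f)\|_{L^1}$; the Green's-function bilinear bound (\lmref{lm.mv.grad.term}) enters here to handle the source $Y^\alpha T_\phi(\varphi)\,Y^\beta(f)$. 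Only after this product estimate is available can one return to $\mathcal{E}_N[f]$ and obtain the integrable-in-time error $\epsilon^{3/2}(1+t)^{-1-\sigma'}$. Your simple $L^\infty_x\times L^1_{x,v}$ splitting also fails when $|\gamma|>N-n-1$, which is exactly where the bilinear estimate and the product lemma do the work.
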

\begin{remark}
	In the work \cite{Smulevici.V-P}, a smilar result was obtained but the norms considered had additional $v$-weighted $L^p$-norms. More precisely, in \cite{Smulevici.V-P}, the high dimensional energy, denoted by $\mathcal{E}_{N,\delta}[f]$ for any $\delta>0$, is defined as
		$$\mathcal{E}_{N,\delta}[f]:= \sum_{|\alpha|\leq N,  Z^{\alpha}\in\gamma^{|\alpha|}} \|Z^{\alpha}f\|_{L^1(\mathbb{R}^n_x\times\mathbb{R}^n_v)} + \sum_{|\alpha|\leq N,  Z^{\alpha}\in\gamma^{|\alpha|}}\|(1+|v|^2)^{\frac{\delta(\delta+n)}{2(1+\delta)}}Z^{\alpha}f\|_{L^{1+\delta}(\mathbb{R}^n_x\times\mathbb{R}^n_v)}.$$
		It not only needs the boundedness of the $L^1$ norms of the commuted fields $Z^{\alpha} f$, but also requires an additional integrability in some weighed $L^p$ norms of the commuted fields $Z^{\alpha} f$. Similar norms were used for the low dimensional case, using modified vector fields. 
		
			In our work, we do not need the extra $v$-weighted $L^p$-norms. This is due to our improved estimates, see in particular \lmref{lm.vp.phiZ} and \lmref{lm.mv.grad.term}. Our assumptions are optimal in $v$, in the sense that $f$ integrable in $v$ is required to make sense of the RHS of \eqref{eq.poisson} and \eqref{eq.screened.poisson} classically. 
\end{remark}
\begin{remark}
	From the energy bounds \eqref{hd} and \eqref{lm.d} and the Klainerman-Sobolev inequalities \eqref{ks:id}, \eqref{ks:ab} and \eqref{ks:mvf}, we automatically obtain sharp decay estimates.  For instance, one has for $|\alpha| \le N-n$, 
	$$
		|\rho(\partial_{x}^{\alpha}f)|(t,x)\lesssim \frac{\epsilon}{(1+t+|x|)^{n+|\alpha|}}.
	$$
\end{remark}
\subsection{Previous Work}
The Vlasov-Poisson system is a classical system from plasma physics and we refer to \cite{Glassey.book} for an introduction to its analysis and physical background. In particular, global existence in $3d$ is known for large data thanks to the work of Pfaffelmoser \cite{Pfaffelmoser} and Lions-Perthame \cite{Lions.Perthame}. For small data, the first result was obtained by Bardos-Degond in \cite{Bardos.Degond}. This result was then strengthened first in \cite{Hwang.Rendall.Velazquez} and then \cite{Smulevici.V-P} where sharp decay estimates for derivatives was obtained. This does not follow trivially from \cite{Bardos.Degond}, since commuting the Vlasov equation \eqref{eq.vlasov} with any derivative a priori generates error terms containing $v$ derivatives of $f$ and those typically grow in $t$. An analysis similar to \cite{Hwang.Rendall.Velazquez} was carried out for the Vlasov-Yukawa system by Choi-Ha-Lee \cite{Choi.Ha.Lee.Yukawa}. 

The analysis by Smulevici in \cite{Smulevici.V-P} relied on a generalization of the vector field method of Klainerman \cite{Klainerman} to the case of kinetic transport equations. There has been recently a lot of activities concerning the study of small data Vlasov systems using such methods, see for instance \cite{Fajman.Joudioux.Smulevici.vector-field,Fajman.Joudioux.Smulevici.vlasov-nordstrom,Fajman.Joudioux.Smulevici.Einstein-Vlasov,Bigorgne.maxwell.high,Bigorgne.maxwell.maxwell.3d,Bigorgne.maxwell.massless,Bigorgne.maxwell.asy,Lindblad.Taylor}. See also \cite{Wang} where Fourier techniques and vector field type techniques are used. 

\subsection{Outline of the Paper}

The outline of the paper is as follows: In section 2, we will briefly introduce the commuting vector fields that we use and derive the main properties needed later in the paper; In section 3, we will present our results and the proofs for the V-P system in dimension $n\geq4$ and the V-Y system in dimension $n\geq3$ using the vector field method together with our improvement using the Green's function, see in particular \lmref{lm.vp.phiZ} and \lmref{lm.mv.grad.term}; In section 4, the results and proofs in dimension $n=3$ for V-P and $n=2$ for V-Y are presented using the modified vector fields and our approach to the higher order estimates.  



\section{Preliminaries}


\subsection{Notations}\label{subsec.notation}

Throughout the paper, we denote by $T$ the free transport operator i.e.
$$T(f):=\partial_t f +\sum_{i=1}^{n}v^i\partial_{x^i}f.$$
For any sufficiently regular function $\phi$, let $T_{\phi}$ denote the perturbed transport operator i.e.
$$T_{\phi}(f):=T(f) + \mu\sum_{i=1}^{n}\partial_{x^i}\phi\partial_{v^i}f.$$

We use the Lie bracket $[A,B]:=AB-BA$ to denote the commutation of two operators.

We use the notation $A\lesssim B$ to express that there exist a global constant $C$ such that $A\leq CB$. Here, the global constant $C$ will only depends on the dimension $n$ and the maximum number of commutations.
 
We use the notation $\displaystyle{A\stackrel{*}{\approx}\sum_{i=1}^{d} B_i}$ to express that there exist some global bounded constants $C_i$, such that $\displaystyle{A=\sum_{i=1}^{d}C_i B_i}$, where constant $C_i$ will only depends on the dimension $n$ and the maximum number of commutations.

\subsection{Commutation Vector Fields}

For linear wave equations $\Box\psi=0$, where $\displaystyle{\Box=-\partial_t^2+\sum_{i=1}^n\partial_{x^i}^2}$, the well-know commutation vector fields are consisted of
\begin{enumerate}
	\item Translations in space and time $\partial_t$, $\partial_{x^i}$,
	\item  Rotations $\Omega_{ij}^{x}:=x^i\partial_{x^j}-x^j\partial_{x^i}$,
	\item  Hyperbolic rotations $\Omega_{0i}^{x}:=t\partial_{x^i} + x^i\partial_t$,
	\item  Scaling vector field $\displaystyle{t\partial_t +\sum_{i=1}^n x^i\partial_{x^i}}$.
\end{enumerate}
For the free transport operator $T$, there also exist vector fields that commute with the operator $T$. The simplest examples are the translations $\partial_t$, $\partial_{x^i}$.  In this paper, we will consider the following vector fields (see \cite{Smulevici.V-P})
\begin{enumerate}
	\item Translations in space $\partial_{x^i}$,
	\item  Uniform motions in one direction in microscopic form $t\partial_{x^i} +\partial_{v^i}$,
	\item  Rotations in microscopic form $x^i\partial_{x^j}-x^j\partial_{x^i} + v^i\partial_{v^j} -v^j\partial_{v^i}$,
	\item  Scaling in space in microscopic form $\displaystyle{\sum_{i=1}^n x^i\partial_{x^i} + \sum_{i=1}^nv^i\partial_{v^i}}$.
\end{enumerate}

\begin{remark}
	We do not use all the commutation vector fields in our theorem, only the vector fields that do not contain time derivatives are taken into account. In fact, the scaling in space and time $\displaystyle{t\partial_t +\sum_{i=1}^n x^i\partial_{x^i}}$ also commutes with $T$ in the sense that 
	$$\left[T, \displaystyle{t\partial_t +\sum_{i=1}^n x^i\partial_{x^i}}\right]=T.$$ 
	In \cite{Smulevici.V-P}, a larger family of vector fields including $\partial_t$ and $\displaystyle{t\partial_t +\sum_{i=1}^n x^i\partial_{x^i}}$ are discussed to obtain additional decay on time derivatives of $\nabla\phi$. Similar result can also be obtain with the method used in this paper. To make the paper more concise and simple, we will only study the decay of non-time derivatives.
\end{remark}

\begin{definition}\label{def.vector.fields}
	We denote by $\gamma$ the set of all the above $2n + n(n-1)/2 +1$ vector fields i.e.
	$$\gamma:=\Big\{ \partial_{x^i}, t\partial_{x^i} +\partial_{v^i}, \Omega_{ij}^x + \Omega_{ij}^v, S^x+ S^v,1\leq i<j\leq n\Big\},$$
	where $S^x=\displaystyle{\sum_{i=1}^nx^i\partial_{x^i}}$, $S^v=\displaystyle{\sum_{i=1}^nv^i\partial_{v^i}}$, $\Omega_{ij}^x=x^i\partial_{x^j}-x^j\partial_{x^i}$, $\Omega_{ij}^v=v^i\partial_{v^j}-v^j\partial_{v^i}$.
\end{definition}

\begin{remark}
	Throughout the paper, we make the convention that $\Omega_{ii}^{x}=0$ and $\Omega_{ij}^{x}=-\Omega_{ji}^{x}$ when $i>j$. The same is also for $v-$derivatives.
\end{remark}

For the sake of simplicity, let $Z^i$, $i=1,..., 2n+ n(n-1)/2 +1$, be an ordering of $\gamma$ such that $Z^i=t\partial_{x^i} +\partial_{v^i}$, $i=1,...,n$. We denote by $Z$ a generic commuting vector field in $\gamma$. For any multi-index $\alpha=(\alpha^1,...,\alpha^k)$ with $k=|\alpha|$, the operator $Z^\alpha\in\gamma^{|\alpha|}$ is defined by $Z^{\alpha}=Z^{\alpha^1}Z^{\alpha^2}...Z^{\alpha^k}$. 

\begin{remark}
	In \cite{Smulevici.V-P}, there are also macroscopic vector fields (the set is denoted by $\Gamma$) corresponding to each element in $\gamma$ i.e.
	$$\Gamma:=\Big\{ \partial_{x^i}, t\partial_{x^i}, \Omega_{ij}^x, S^x,1\leq i<j\leq n\Big\}.$$
	For functions that only depend on $(t,x)$, such as $\phi$ and $\rho(f)$, $Z(\phi)$  can be understood as the action of the corresponding macroscopic vector field on $\phi$.
\end{remark}

Before we present our result, let us first look at some important properties of the vector fields $\gamma$.

\begin{lemma}[Commutation with $T$, $\triangle$ and $\rho$]\label{lm.commu.TZ}
	For any $Z^{\alpha}\in\gamma^{|\alpha|}$, we have
	\begin{enumerate}
		\item $[T, Z^{\alpha}]=0$.
		\item $\displaystyle{[Z^{\alpha},\triangle]=\sum_{|\beta|\leq |\alpha|  -1} c_{\alpha\beta} Z^{\beta}\triangle}$.
		\item $\displaystyle{Z^{\alpha}\rho(f)=\rho(Z^{\alpha}f) +\sum_{|\beta|\leq |\alpha|-1}c_{\alpha\beta}' \rho(Z^{\beta} f)}$.
	\end{enumerate}
where $c_{\alpha\beta},c'_{\alpha\beta}$ are global bounded constants that only depend on $n$ and $|\alpha|$.
\end{lemma}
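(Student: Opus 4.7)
The plan is to prove all three statements by induction on $|\alpha|$, with the base case $|\alpha|=1$ reducing to a finite table of commutator computations over the four families of generators in \defref{def.vector.fields}. The inductive step will then use the Leibniz rule $[AB,C]=A[B,C]+[A,C]B$ in (1) and (2), and direct iteration in (3).

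For (1), I would verify $[Z,T]=0$ for each $Z\in\gamma$: the case $Z=\partial_{x^i}$ is trivial; for $Z=t\partial_{x^i}+\partial_{v^i}$ the term $[t\partial_{x^i},\partial_t]=-\partial_{x^i}$ cancels against $[\partial_{v^i},v^j\partial_{x^j}]=\partial_{x^i}$; for $Z=\Omega_{ij}^x+\Omega_{ij}^v$ the contributions $[\Omega_{ij}^x,v^k\partial_{x^k}]$ and $[\Omega_{ij}^v,v^k\partial_{x^k}]$ are negatives of each other; for $Z=S^x+S^v$ similarly $[S^x,v^k\partial_{x^k}]=-v^k\partial_{x^k}$ cancels $[S^v,v^k\partial_{x^k}]=v^k\partial_{x^k}$. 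The induction is then immediate: writing $Z^\alpha=Z\,Z^{\alpha'}$ one has $[Z^\alpha,T]=Z[Z^{\alpha'},T]+[Z,T]Z^{\alpha'}=0$.

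For (2), note that $\triangle$ involves only $x$-derivatives, so every $v$-derivative component of a generator commutes with it. A direct check shows $[Z,\triangle]=0$ for all $Z\in\gamma$ except the scaling, where $[S^x,\triangle]=-2\triangle$ (the standard computation for the dilation); hence for every single $Z\in\gamma$ one has $[Z,\triangle]=c_Z\triangle$ with $c_Z\in\{0,-2\}$. The induction step writes
\begin{equation*}
[Z^\alpha,\triangle]=Z[Z^{\alpha'},\triangle]+[Z,\triangle]Z^{\alpha'}=Z\sum_{|\beta|\le|\alpha'|-1}c_{\alpha'\beta}Z^\beta\triangle+c_Z\triangle Z^{\alpha'},
\end{equation*}
and then uses $\triangle Z^{\alpha'}=Z^{\alpha'}\triangle-[Z^{\alpha'},\triangle]$ together with the induction hypothesis again to push all $\triangle$'s to the right. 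Collecting yields a sum of the required form with $|\beta|\le|\alpha|-1$.

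For (3), the base case is the main calculation: for each $Z\in\gamma$ one computes $Z\rho(f)$, interpreting the $v$-derivative part of $Z$ as $0$ when acting on $\rho(f)$ (this is the macroscopic counterpart recalled in the remark following \defref{def.vector.fields}), and compares with $\rho(Zf)$ via integration by parts in $v$. The boundary terms vanish for the Schwartz-class-like solutions considered. For $Z\in\{\partial_{x^i},\,t\partial_{x^i}+\partial_{v^i},\,\Omega_{ij}^x+\Omega_{ij}^v\}$ one obtains exactly $Z\rho(f)=\rho(Zf)$, while for the scaling $Z=S^x+S^v$ the identity $\int v^i\partial_{v^i}f\,\mathrm{d}v=-n\rho(f)$ produces the extra term $Z\rho(f)=\rho(Zf)+n\rho(f)$. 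Hence in all cases $Z\rho(f)=\rho(Zf)+c_Z'\rho(f)$ for a constant depending on $Z$. Writing $Z^\alpha=ZZ^{\alpha'}$, applying $Z$ to the induction hypothesis and expanding each $Z\rho(\cdot)$ via the base case gives the claim, the only top-order term being $\rho(Z^\alpha f)$.

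I do not expect any substantive obstacle: the proof is an induction driven by a short finite list of elementary commutator identities, and the only point requiring care is remembering that $v$-derivatives act trivially on $(t,x)$-functions such as $\rho(f)$ and the arguments of $\triangle$, so that all "mixed" generators like $\Omega_{ij}^x+\Omega_{ij}^v$ or $S^x+S^v$ reduce to their macroscopic parts in (2) and in the analysis of $Z\rho(f)$ in (3).
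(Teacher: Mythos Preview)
Your proposal is correct and follows essentially the same approach as the paper: verify the base case $|\alpha|=1$ by a finite check over the generators of $\gamma$ (with only $S^x+S^v$ producing a nontrivial term in (2) and (3)), and then induct using the Leibniz rule for commutators. Your write-up is in fact more detailed than the paper's, which simply states the base-case identities and says ``by induction'' for the general case.
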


\begin{proof}
The first one is simple, because $\gamma$ is composed of commutation vector field, i.e. $\forall Z\in\gamma$, $[T, Z]=0$.
For the second one, we can check that $[Z,\triangle]=0$ if $Z\neq S^x+S^v$. For $S^x+S^v$, we have $[S^x,\triangle]=-2\triangle$. Similarly, for the last one, we can check that $Z\rho(f)=\rho(Zf)$ for any $Z\neq S^x+S^v$. For $S^x+S^v$, we have
$$S^x\rho(f)=\rho((S^x+S^v)f) + n \rho(f).$$
By induction, we can show the general formula for multi-index operators.
\end{proof}

\begin{lemma}[Commutation within $\gamma$]\label{lm.commu.ZZ}
	For any $Z^{\alpha}\in\gamma^{|\alpha|}$, $Z^{\alpha'}\in\gamma^{|\alpha'|}$, we have,
	$$[Z^{\alpha},Z^{\alpha'}]=\sum_{|\beta|\leq |\alpha| + |\alpha'| -1} c_{\beta}^{\alpha\alpha'}Z^{\beta}.$$
	Moreover, if $Z^{\alpha'}=\partial_{x^i}$, we have,
	$$[Z^{\alpha},\partial_{x^i}]=\sum_{j=1}^{n}\sum_{|\beta|\leq|\alpha|-1}c^{\alpha,i}_{\beta,j}\partial_{x^j}Z^{\beta}.$$
	Here all the constants are global bounded constants that only depend on $n$ and $\max\{|\alpha|,|\alpha'|\}$.
\end{lemma}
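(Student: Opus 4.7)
My plan is to prove both commutation identities by induction on the total order of the multi-indices, with the base case reduced to checking that the linear span of $\gamma$ is closed under the Lie bracket. The first step is to verify this closure at the level of single generators: for any $Z, Z' \in \gamma$, $[Z, Z']$ is a linear combination of elements of $\gamma$ with structure constants depending only on $n$. This is a finite set of direct computations across the four families in \defref{def.vector.fields}. For instance, one finds $[\partial_{x^i}, \Omega^x_{jk} + \Omega^v_{jk}] = \delta_{ij}\partial_{x^k} - \delta_{ik}\partial_{x^j}$, $[\partial_{x^i}, S^x + S^v] = \partial_{x^i}$, $[t\partial_{x^i} + \partial_{v^i}, \Omega_{jk}^x + \Omega_{jk}^v] = \delta_{ij}(t\partial_{x^k}+\partial_{v^k}) - \delta_{ik}(t\partial_{x^j}+\partial_{v^j})$, and $[t\partial_{x^i} + \partial_{v^i}, S^x + S^v] = t\partial_{x^i} + \partial_{v^i}$; the remaining brackets among translations, uniform motions, and rotations produce standard expressions that land back in $\gamma$.

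For the first identity, I would then induct on $|\alpha| + |\alpha'|$. The cases $|\alpha| = 0$ or $|\alpha'| = 0$ are trivial. For the inductive step, assume $|\alpha| \ge 1$ and write $Z^{\alpha} = Z\, Z^{\tilde\alpha}$ with $Z \in \gamma$ and $|\tilde\alpha| = |\alpha| - 1$. Applying the Leibniz-type identity
$$[Z^{\alpha}, Z^{\alpha'}] = Z\, [Z^{\tilde\alpha}, Z^{\alpha'}] + [Z, Z^{\alpha'}]\, Z^{\tilde\alpha}$$
reduces each of the two commutators on the right-hand side to total order strictly less than $|\alpha|+|\alpha'|$; by the inductive hypothesis, each becomes a combination of $Z^\sigma$ with $|\sigma| \le |\alpha|+|\alpha'|-2$, and multiplication by the remaining single generator or by $Z^{\tilde\alpha}$ then yields a combination of $Z^\beta$ with $|\beta| \le |\alpha|+|\alpha'|-1$.

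For the second identity, the crucial additional observation is a strengthening of the base case: for every $Z \in \gamma$, $[Z, \partial_{x^i}]$ lies not merely in the span of $\gamma$ but in the smaller set $\{\partial_{x^j}\}_{j=1}^n$ of first-order $x$-derivatives. This is immediate because only rotations and the scaling produce a nonzero bracket with $\partial_{x^i}$, and in both cases their $v$-components commute with $\partial_{x^i}$; explicitly, $[\Omega_{jk}^x + \Omega_{jk}^v, \partial_{x^i}] = -\delta_{ij}\partial_{x^k} + \delta_{ik}\partial_{x^j}$ and $[S^x + S^v, \partial_{x^i}] = -\partial_{x^i}$. Then, inducting on $|\alpha|$ and again writing $Z^{\alpha} = Z\, Z^{\tilde\alpha}$, I would use
$$[Z^{\alpha}, \partial_{x^i}] = Z\, [Z^{\tilde\alpha}, \partial_{x^i}] + [Z, \partial_{x^i}]\, Z^{\tilde\alpha}.$$
The second term is already of the required shape $\sum_j c_j\, \partial_{x^j} Z^{\tilde\alpha}$. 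The first, by the inductive hypothesis, expands into $Z \sum_{j,\beta} c\, \partial_{x^j} Z^{\beta}$; commuting $Z$ past $\partial_{x^j}$ via $Z \partial_{x^j} = \partial_{x^j} Z + [Z, \partial_{x^j}]$ and reapplying the strengthened base case to the residual bracket yields only terms of the form $\partial_{x^j} Z^{\beta'}$ with $|\beta'| \le |\alpha|-1$.

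The main obstacle is purely combinatorial: keeping track of multi-index orders through iterated application of the Leibniz rule and ensuring that the strengthening to bare $\partial_{x^j}$-derivatives survives each commutation step in the proof of the second identity. There is no analytic difficulty whatsoever beyond the finite Lie bracket table at the level of generators.
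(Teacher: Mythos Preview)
Your proposal is correct and follows essentially the same approach as the paper: verify that the span of $\gamma$ is a Lie algebra at the level of single generators (with the refinement that $[Z,\partial_{x^i}]$ lands in $\mathrm{span}\{\partial_{x^j}\}$), then induct via the Leibniz rule for commutators. The paper's own proof is much terser---it simply states the base case and says ``by induction''---so your write-up is a fully spelled-out version of the same argument; the only minor imprecision is that in the first induction the bracket $[Z,Z^{\alpha'}]$ does not have strictly smaller total order when $|\alpha|=1$, but this is harmless since one can peel from the $Z^{\alpha'}$ side instead.
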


\begin{proof}
	For any $Z, Z'\in\gamma$, we can check that $\displaystyle{[Z,Z']\stackrel{*}{\approx} \sum_{Z''\in\gamma}Z''}$, especially, when $Z'=\partial_{x^i}$,  we have $\displaystyle{[Z,\partial_{x^i}]\stackrel{*}{\approx} \sum_{j=1}^n\partial_{x^j}}$. (Here the symbol $\stackrel{*}{\approx}$ means that both sides are equal up to multiplying bounded constants before every terms in the summation, c.f. subsection \ref{subsec.notation}.) We then can obtain the formula by induction on $\alpha,\alpha'$.
\end{proof}

\begin{lemma}[Commutation with $T_{\phi}$]\label{lm.commu.EqT}
	For any $Z^{\alpha}\in\gamma^{|\alpha|}$, we have,
	\begin{equation}
	[T_{\phi}, Z^{\alpha}] f = \sum_{i,j=1}^n\sum_{|\beta|\leq |\alpha|-1}\sum_{|\gamma|+|\beta|\leq |\alpha| }C_{\beta\gamma}^{\alpha,ij} \partial_{x^i} Z^{\gamma} \phi \partial_{v^j} Z^{\beta} f,
	\end{equation}
	where $C_{\beta\gamma}^{\alpha,ij}$ are constants bounded by $n$ and $|\alpha|$.
\end{lemma}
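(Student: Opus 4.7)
The plan is to exploit the decomposition $T_\phi = T + \mu\sum_i \partial_{x^i}\phi\,\partial_{v^i}$. By the first item of \lmref{lm.commu.TZ}, $[T,Z^\alpha]=0$, so the problem reduces to analysing $[\mu\,\partial_{x^i}\phi\,\partial_{v^i},Z^\alpha]f$, which I would handle by induction on $|\alpha|$.

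For the base case $|\alpha|=1$, I would compute $[\partial_{x^i}\phi\,\partial_{v^i},Z]f$ directly for each of the four families of vector fields in $\gamma$. The key observations are that $\phi$ is independent of $v$ (so $\partial_{v^i}$ commutes with multiplication by any function of $(t,x)$) and that $Z\phi$ coincides with the action of the macroscopic part of $Z$ on $\phi$. Expanding via the Leibniz rule, a short case-by-case calculation yields $[\partial_{x^i}\phi\,\partial_{v^i},Z]f = -\partial_{x^i}(Z\phi)\,\partial_{v^i}f + R_Z$, where $R_Z$ vanishes for the translations $\partial_{x^k}$ and the microscopic uniform motions $t\partial_{x^k}+\partial_{v^k}$, and is a constant-coefficient sum of terms $\partial_{x^k}\phi\,\partial_{v^l}f$ for rotations and for the scaling field. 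All summands fit the desired form with $|\gamma|\le 1$ and $|\beta|=0$.

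For the inductive step, I would write $Z^\alpha = Z Z^{\alpha'}$ with $|\alpha'|=|\alpha|-1$ and use
\begin{equation*}
[T_\phi, ZZ^{\alpha'}]f = [T_\phi, Z]\,Z^{\alpha'}f + Z\bigl([T_\phi,Z^{\alpha'}]f\bigr).
\end{equation*}
The first summand is the base case with $f$ replaced by $Z^{\alpha'}f$, contributing terms $\partial_{x^i}Z^\gamma\phi\cdot\partial_{v^j}Z^{\alpha'}f$ with $|\gamma|\le 1$, $|\beta|=|\alpha'|=|\alpha|-1$, and hence $|\gamma|+|\beta|\le|\alpha|$. For the second summand, the induction hypothesis writes $[T_\phi,Z^{\alpha'}]f$ as a sum of $\partial_{x^i}Z^\gamma\phi\cdot\partial_{v^j}Z^\beta f$ with $|\beta|\le|\alpha|-2$ and $|\gamma|+|\beta|\le|\alpha|-1$; applying $Z$ via Leibniz and commuting $Z$ past $\partial_{x^i}$, $Z^\gamma$, $\partial_{v^j}$ and $Z^\beta$ produces terms of the same bilinear shape with one of $|\gamma|,|\beta|$ increased by at most one, preserving $|\gamma|+|\beta|\le|\alpha|$ and $|\beta|\le|\alpha|-1$.

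The only subtle point, and the item I would verify most carefully, is that commuting $Z$ past the outer $\partial_{x^i}$ and $\partial_{v^j}$ leaves remainders that still factor as a single $x$-derivative of a $Z^{\gamma'}\phi$ times a single $v$-derivative of a $Z^{\beta'}f$, rather than becoming generic mixed derivatives of $\phi$ and $f$. On the $x$-side this is exactly the second statement of \lmref{lm.commu.ZZ}, which says $[Z,\partial_{x^i}]$ is a constant-coefficient combination of $\partial_{x^j}$'s; the analogous property for $v$-derivatives, namely that $[Z,\partial_{v^i}]=0$ for translations and uniform motions and is a constant-coefficient combination of $\partial_{v^k}$'s for rotations and the scaling field, is an immediate case-by-case check. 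All absolute constants are then absorbed into the $C^{\alpha,ij}_{\beta\gamma}$, which depend only on $n$ and $|\alpha|$ as required.
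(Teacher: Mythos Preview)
Your proof is correct and follows essentially the same route as the paper: both reduce to the force term via $[T,Z^\alpha]=0$, apply the Leibniz rule, and then use that $[Z,\partial_{x^i}]$ (respectively $[Z,\partial_{v^i}]$) is a constant-coefficient combination of $\partial_{x^j}$'s (respectively $\partial_{v^j}$'s). The only difference is organizational: the paper expands $Z^\alpha(\partial_{x^i}\phi\,\partial_{v^i}f)$ directly via the full Leibniz formula and then commutes $\partial_{x^i}$ and $\partial_{v^i}$ through each factor, whereas you induct on $|\alpha|$; neither gains anything over the other.
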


\begin{proof}
	The proof is based on the previous two lemmas. Since $Z^\alpha$ commute with $T$, we only need to look at the term
	$$[\partial_{x^i}\phi\partial_{v^i} , Z^{\alpha}]f=\partial_{x^i}\phi\partial_{v^i}Z^{\alpha}f- Z^{\alpha}(\partial_{x^i}\phi\partial_{v^i}f).$$
	Since $\displaystyle{Z^{\alpha}(\partial_{x^i}\phi\partial_{v^i}f)=\partial_{x^i}\phi Z^{\alpha}(\partial_{v^i}f)  + \sum_{|\gamma|\geq 1,|\beta|+|\gamma|=|\alpha|}C_{\beta\gamma}^{\alpha}Z^{\gamma}(\partial_{x^i}\phi)Z^{\beta}(\partial_{v^i}f)}$, we have,
	$$[\partial_{x^i}\phi\partial_{v^i}, Z^{\alpha}]f=\partial_{x^i}\phi[\partial_{v^i},Z^{\alpha}]f - \sum_{|\gamma|\geq 1,|\beta|+|\gamma|=|\alpha|}C_{\beta\gamma}^{\alpha}Z^{\gamma}(\partial_{x^i}\phi)Z^{\beta}(\partial_{v^i}f).$$
	The only thing left is to show that, for any $Z^{\eta}\in\gamma^{|\eta|}$, $$[\partial_{v^i},Z^{\eta}]\stackrel{*}{\approx}\sum_{|\eta'|\leq|\eta|-1}\partial_{v^j}Z^{\eta'}.$$
	The proof of this is exactly the same as $\partial_{x^i}$ in \lmref{lm.commu.ZZ}.
\end{proof}

\subsection{Klainerman-Sobolev Inequalities for Velocity Averages}

Typically, there are two ways of getting decay estimates for the V-P system or the V-Y system. A standard way is by using the method of characteristics, for example the decay estimate of Bardos-Degond on the V-P system \cite{Bardos.Degond}. Another way is to get decay estimate from the Klainerman-Sobolev inequalities by the using the commutation vector fields, for example Smulevici's result on V-P  system \cite{Smulevici.V-P}. In this paper, we will use the following $L^1$ Klainerman-Sobolev inequality,

\begin{lemma}[$L^1$ Klainerman-Sobolev inequality]\label{lm.klainerman}
	For any sufficiently regular function $\psi(x)$, we have
	\begin{equation}\label{eq.klainerman}
	|\psi|(x)\lesssim\frac{1}{(1+t+|x|)^n}\sum_{|\alpha|\leq n, Z^{\alpha}\in\Gamma^{|\alpha|}}||Z^{\alpha}(\psi)||_{L^1(\mathbb{R}^n_x)}.
	\end{equation}
\end{lemma}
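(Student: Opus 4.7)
The plan is to reduce the weighted pointwise bound to the flat $L^1$-Sobolev embedding
$$\|u\|_{L^\infty(\mathbb{R}^n)}\lesssim\sum_{|\alpha|\le n}\|\partial^\alpha u\|_{L^1(\mathbb{R}^n)},$$
by a change of variables whose Jacobian is precisely the factor $(1+t+|x|)^n$ that one wants to pull out. I would fix a point $x_0$, set $R:=1+t+|x_0|$, and split into three regimes according to whether $R\sim 1$, $R\sim 1+t$, or $R\sim|x_0|$.

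The trivial case $t+|x_0|\le 1$ follows directly from the flat embedding above since $\partial_{x^i}\in\Gamma$. In the interior case $|x_0|\le (1+t)/2$ (so $R\sim 1+t$), I would introduce $\tilde\psi(y):=\chi(y)\,\psi((1+t)y)$ for a fixed smooth cutoff $\chi$ equal to $1$ near $y=x_0/(1+t)$ and supported in a bounded ball, apply the flat embedding to $\tilde\psi$ at the point $y=x_0/(1+t)$, and then change variables back to $x$. Each $\partial^\alpha_y\tilde\psi$ produces terms of the form $(1+t)^{|\beta|}\partial^\beta_x\psi$ with $|\beta|\le|\alpha|\le n$, and the Jacobian contributes $(1+t)^{-n}$, so one lands on a bound of the desired shape. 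The factors $(1+t)^{|\beta|}\partial^\beta_x$ are then absorbed into $Z^\beta\in\Gamma^{|\beta|}$ by using $t\partial_{x^i}\in\Gamma$ (splitting into $t\le 1$ and $t\ge 1$ to trade powers of $1+t$ for powers of $t$).

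In the exterior case $|x_0|\ge(1+t)/2$ (so $R\sim|x_0|$), the analogous rescaling $y=x/|x_0|$ again reduces matters to the flat embedding on a ball, at the cost of the Jacobian $|x_0|^{-n}$. What now has to be handled is that each $\partial_{y^i}$ corresponds to $|x_0|\,\partial_{x^i}$, which is not by itself an element of $\Gamma$ when $|x_0|>t$. The key observation I would exploit is that on the annulus $\tfrac12|x_0|\le|x|\le 2|x_0|$ cut out by $\chi$, one can express $|x|\,\partial_{x^i}$ as a linear combination with bounded coefficients of $S^x$ and the rotations $\Omega^x_{ij}$ (this is the familiar identity $|x|^2\partial_{x^i}=x^i\,S^x+\sum_j x^j\Omega^x_{ji}$). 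Since $|x|\sim|x_0|$ on that annulus, the rescaled derivatives are, up to bounded coefficients, vector fields in $\Gamma$, and iterating this identity controls all $|\alpha|\le n$ derivatives.

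The main obstacle I expect is precisely this bookkeeping step in the exterior regime: one has to iterate the commutator identity for $|x|\partial_{x^i}$ without losing regularity and without accumulating weights that destroy the gain $|x_0|^{-n}=R^{-n}$. Once that combinatorial point is settled (using Lemma \ref{lm.commu.ZZ} to control the commutators generated at each step), one assembles the three cases via a smooth partition subordinate to the dichotomy $\{|x_0|\le(1+t)/2\}$ vs $\{|x_0|\ge(1+t)/2\}$, and the claimed inequality \eqref{eq.klainerman} follows.
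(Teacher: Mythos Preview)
Your approach is correct, but it takes a slightly different route from the paper. Instead of splitting into interior and exterior regimes with separate rescalings by $(1+t)$ and by $|x_0|$, the paper performs a single affine change of variables $z=x+(t+|x|)y$, $y\in B(0,1/2)$, centered at the evaluation point; on that half-ball one has $t+|z|\sim t+|x|$, so no partition of unity is needed. The conversion of weighted derivatives into $\Gamma$-vector fields is then handled in one stroke by Lemma~\ref{lm.xdx.n}, which records exactly the iterated version of the identity you plan to use in the exterior case (your formula $|x|^2\partial_{x^i}=x^iS^x+\sum_j x^j\Omega^x_{ji}$ is Lemma~2.4, equation~\eqref{eq.xdx}). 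What you gain by the case split is that each regime uses only a subset of $\Gamma$ (translations and $t\partial_x$ in the interior, $S^x$ and $\Omega^x_{ij}$ in the exterior), which makes the role of each vector field transparent; what the paper's unified rescaling buys is that the ``bookkeeping step'' you flag as the main obstacle is already packaged into Lemma~\ref{lm.xdx.n} and never has to be redone, and the cutoff $\chi$ and the $t\le1$/$t\ge1$ dichotomy disappear entirely.
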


The proof of this inequality is quite standard, we refer to \cite{Sogge.nonlinear.wave} and \cite{Smulevici.V-P} for detailed proof. Here we mention some important equalities about the vector fields.

\begin{lemma}
	For any $1\leq i\leq n$ and $x\neq0$, we have
	\begin{equation}\label{eq.xdx}
	|x|\partial_{x^i}=\sum_{j=1}^n\frac{x^j}{|x|}\Omega_{ji}^{x} +\frac{x^i}{|x|}S^x.
	\end{equation}
\end{lemma}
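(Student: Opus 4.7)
The plan is simply to verify the identity by a direct algebraic expansion of the right-hand side, using the definitions $\Omega_{ji}^x = x^j\partial_{x^i} - x^i\partial_{x^j}$ and $S^x = \sum_{k=1}^n x^k \partial_{x^k}$ given in Definition \ref{def.vector.fields}.

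First I would substitute these definitions into the sum $\sum_{j=1}^n \frac{x^j}{|x|}\Omega_{ji}^x$, which yields
$$\sum_{j=1}^n \frac{x^j}{|x|}\bigl(x^j \partial_{x^i} - x^i \partial_{x^j}\bigr) = \frac{1}{|x|}\Bigl(\sum_{j=1}^n (x^j)^2\Bigr)\partial_{x^i} - \frac{x^i}{|x|}\sum_{j=1}^n x^j \partial_{x^j}.$$
Using the identity $\sum_{j=1}^n (x^j)^2 = |x|^2$ on the first term and recognising the second sum as $S^x$, this becomes $|x|\partial_{x^i} - \frac{x^i}{|x|}S^x$.

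Adding the remaining contribution $\frac{x^i}{|x|}S^x$ from the right-hand side, the two $S^x$ terms cancel, leaving exactly $|x|\partial_{x^i}$, as required. The hypothesis $x \neq 0$ is only needed to ensure division by $|x|$ is well-defined; no further analytic content is involved.

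There is essentially no obstacle here — the lemma is a pointwise linear-algebraic identity about the first-order differential operators in $\gamma$, and the computation closes in a few lines. The content of the identity is the familiar decomposition of $\partial_{x^i}$ into its radial component (captured by $S^x$) and its angular components (captured by the rotations $\Omega_{ji}^x$), with weights $x^i/|x|$ and $x^j/|x|$ respectively.
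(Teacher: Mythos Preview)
Your proof is correct; the paper in fact states this lemma without proof, treating it as an elementary identity, so your direct algebraic verification is exactly the natural (and essentially only) argument.
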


\begin{lemma}\label{lm.xdx.n}
	For any multi-index $\alpha$, we have
	\begin{equation}
	(t+|x|)^{|\alpha|}\partial_x^{\alpha}=\sum_{|\beta|\leq |\alpha|, Z^{\beta}\in\Gamma^{|\beta|}}C_\beta^\alpha(x) Z^{\beta},
	\end{equation}
	where the coefficients $C_{\beta}(x)$ are homogeneous of degree 0 and uniformly bounded by a constant that depends only on $n$ and $|\alpha|$.
\end{lemma}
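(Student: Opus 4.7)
The plan is to proceed by induction on $|\alpha|$. The base case $|\alpha|=0$ is trivial. For $|\alpha|=1$, I simply add $t\partial_{x^i}\in\Gamma$ to both sides of the previous identity \eqref{eq.xdx} to obtain
\[
(t+|x|)\partial_{x^i}=(t\partial_{x^i})+\sum_{j=1}^n \frac{x^j}{|x|}\,\Omega_{ji}^{x}+\frac{x^i}{|x|}\,S^x,
\]
which is a combination of vector fields in $\Gamma$ with coefficients $1$ and $x^k/|x|$, each homogeneous of degree $0$ in $x$ and bounded.

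For the inductive step, I would pick an index $i$, write $\partial_x^{\alpha}=\partial_{x^i}\partial_x^{\alpha'}$ with $|\alpha'|=|\alpha|-1$, and apply the Leibniz rule using $\partial_{x^i}(t+|x|)^{|\alpha|-1}=(|\alpha|-1)(t+|x|)^{|\alpha|-2}(x^i/|x|)$, which gives the operator identity
\[
(t+|x|)^{|\alpha|}\partial_x^{\alpha}=\bigl[(t+|x|)\partial_{x^i}\bigr]\cdot\bigl[(t+|x|)^{|\alpha|-1}\partial_x^{\alpha'}\bigr]-(|\alpha|-1)\frac{x^i}{|x|}\,(t+|x|)^{|\alpha|-1}\partial_x^{\alpha'}.
\]
The second term is handled immediately by the inductive hypothesis applied to $(t+|x|)^{|\alpha|-1}\partial_x^{\alpha'}$, multiplied by the degree-$0$ bounded factor $(|\alpha|-1)(x^i/|x|)$. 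For the first term, I substitute the inductive expansion $(t+|x|)^{|\alpha|-1}\partial_x^{\alpha'}=\sum_{\beta}C_{\beta}^{\alpha'}(x)Z^{\beta}$ together with the base-case expansion of $(t+|x|)\partial_{x^i}$, then distribute the outer derivative via Leibniz. Most resulting terms are already of the desired form, namely products of degree-$0$ bounded coefficients times elements of $\Gamma^{|\beta|+1}$ with $|\beta|+1\le|\alpha|$.

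The delicate terms are those in which the outer $\partial_{x^i}$ falls on the coefficient $C_{\beta}^{\alpha'}(x)$. Here the crucial observation is that $C_{\beta}^{\alpha'}$ is homogeneous of degree $0$, so $S^{x}(C_{\beta}^{\alpha'})=0$, and together with the radial--tangential decomposition coming from \eqref{eq.xdx} we get
\[
\partial_{x^i}(C_{\beta}^{\alpha'})=\frac{1}{|x|}\sum_{j=1}^n \frac{x^j}{|x|}\,\Omega_{ji}^{x}(C_{\beta}^{\alpha'}),
\]
so that $|x|\partial_{x^i}(C_{\beta}^{\alpha'})$ is itself degree $0$ and bounded. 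This cleanly handles the $|x|\partial_{x^i}$ part of $(t+|x|)\partial_{x^i}$ acting on $C_{\beta}^{\alpha'}$.

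The main obstacle is the complementary $t\partial_{x^i}$ part, which a priori produces a coefficient of the form $(t/|x|)\times(\text{degree }0)$, lying outside the permitted class. The key trick to close the argument is the pair of absorption identities
\[
\frac{t}{|x|}\,\Omega_{mn}^{x}=\frac{x^m}{|x|}(t\partial_{x^n})-\frac{x^n}{|x|}(t\partial_{x^m}),\qquad \frac{t}{|x|}\,S^x=\sum_{k=1}^n \frac{x^k}{|x|}(t\partial_{x^k}),
\]
which trade a factor $t/|x|$ multiplying a rotation or scaling vector field for a degree-$0$ bounded combination of $t\partial_{x^k}\in\Gamma$. Combined with \lmref{lm.commu.ZZ} to commute factors within $\Gamma$ (absorbing the resulting lower-order pieces into the induction), these identities allow every bad $t/|x|$ factor to be transferred onto a $t\partial_{x^k}$, so that the final expression remains a sum of degree-$0$ bounded coefficients times products of at most $|\alpha|$ vector fields in $\Gamma$. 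The bookkeeping required to consistently apply these absorption identities at each step is where the bulk of the technical work lies.
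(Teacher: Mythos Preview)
Your inductive scheme hits a genuine obstruction that the absorption identities do not resolve. Already after the $|\alpha|=2$ step, the expansion of $(t+|x|)^{2}\partial_{x^{i_1}}\partial_{x^{i_2}}$ contains terms of the form $c(x)\,(t\partial_{x^j})$ with $c$ a \emph{non-constant} degree-$0$ function; for instance your correction term $-(x^{i_1}/|x|)\,(t+|x|)\partial_{x^{i_2}}$ contributes $-(x^{i_1}/|x|)(t\partial_{x^{i_2}})$. In the next inductive step, when the outer $t\partial_{x^{i_0}}$ differentiates this coefficient, you produce a term of the shape $(t/|x|)\,g(x)\,(t\partial_{x^{j}})$ with $g$ degree-$0$. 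Your absorption identities only apply when the adjacent field is a rotation or $S^x$; here the adjacent field is a single $t\partial_{x^j}$, with nothing to commute it past. Rewriting $(t/|x|)(t\partial_{x^j})=(t^2/|x|^2)\,|x|\partial_{x^j}$ and re-expanding via \eqref{eq.xdx} just regenerates the same obstruction, so the procedure does not close. These contributions do cancel in aggregate (the lemma is true), but your outline neither exhibits nor explains the cancellation, and labeling it ``bookkeeping'' understates what is actually missing.

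The paper avoids the whole difficulty by first expanding $(t+|x|)^{|\alpha|}$ binomially. Since $t$ commutes with every $\partial_{x^k}$, each summand factors as
\[
t^{k}\,|x|^{|\alpha|-k}\,\partial_x^{\alpha}
=\bigl[\,|x|^{|\alpha_1|}\partial_x^{\alpha_1}\,\bigr]\cdot\bigl[(t\partial_{x^{j_1}})\cdots(t\partial_{x^{j_k}})\bigr],
\]
with the right-hand factor already a product of $\Gamma$-fields with constant coefficient. One is therefore reduced to showing $|x|^{|\alpha_1|}\partial_x^{\alpha_1}=\sum_{\beta}\widetilde{C}_\beta(x)Z^{\beta}$ with $\widetilde{C}_\beta$ degree-$0$; this is a clean $t$-free induction using only \eqref{eq.xdx} and \lmref{lm.commu.ZZ}, since the fields $\Omega_{ij}^{x}$ and $S^x$ appearing there map degree-$0$ functions to degree-$0$ functions (with $S^x$ annihilating them), so no $t/|x|$ factor ever arises.
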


\begin{proof}
    Without loss of generality, we assume $x\neq0$. Since $t\partial_x\in\Gamma$, we only need to proof
    $$|x|^{|\alpha'|}\partial_x^{\alpha'}=\sum_{|\beta'|\leq |\alpha'|, Z^{\beta'}\in\Gamma^{|\beta'|}}\widetilde{C}_{\beta'}^{\alpha'}(x) Z^{\beta'}.$$
    Using the equality \eqref{eq.xdx} and \lmref{lm.commu.ZZ}, we can get the desired result by induction on $\alpha'$.
\end{proof}

\begin{proof}[Sketch of the proof of \lmref{lm.klainerman}]
	For fixed $(t,x)$, let $\widetilde{\psi}(y)$ be the function such that
	$$\widetilde{\psi}(y)=\psi(x+(t+|x|)y),\quad y\in B_n(0,1/2).$$
	By Sobolev inequality, we have
	$$|\psi(x)|=|\widetilde{\psi}(0)|\lesssim \sum_{|\alpha|\leq n}\|\partial_y^{\alpha}\widetilde{\psi}\|_{L^1(B_n(0,1/2))}.$$
	Since $|y|\leq 1/2$, we have
	$$\frac{1}{2}(t+|x|)\leq t+|x+(t+|x|)y|\leq \frac{3}{2}(t+|x|).$$
	With the above control and \lmref{lm.xdx.n}, we have that, for every $y\in B_n(0,1/2)$,
	\begin{align*}
	|\partial_y^{\alpha}\widetilde{\psi}(y)| &=(t+|x|)^{|\alpha|}|\partial_x^{\alpha}\psi(x+(t+|x|)y)|\\
	& \lesssim (t+|x+(t+|x|)y|)^{|\alpha|}|\partial_x^{\alpha}\psi(x+(t+|x|)y)|\\
	&\lesssim \sum_{|\beta|\leq|\alpha|,Z^{\beta}\in\Gamma^{|\beta|}}|Z^{\beta}\psi(x+(t+|x|)y)|
	\end{align*}
	By doing the change of variables $z=x+(t+|x|)y$, we will get the decay power in \eqref{eq.klainerman}.
\end{proof}

Now, if we apply the $L^1$ Klainerman-Sobolev inequality to the velocity average $\rho(f)$, together with \lmref{lm.commu.TZ}, we get,

\begin{proposition}
	For any sufficiently regular function $f(x,v)$, we have,
	\begin{equation}
	|\rho(f)|(x)\lesssim \frac{1}{(1+t+|x|)^n}\sum_{|\alpha|\leq n, Z^{\alpha}\in\gamma^{|\alpha|}}\|Z^{\alpha}f\|_{L^1(\mathbb{R}^n_x\times\mathbb{R}^n_v)}.
	\end{equation}
\end{proposition}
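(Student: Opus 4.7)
The plan is to chain together the $L^1$ Klainerman--Sobolev inequality (\lmref{lm.klainerman}) with the commutation formula for $\rho$ (part (3) of \lmref{lm.commu.TZ}), viewing $\rho(f)$ as a function of $(t,x)$ only.

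First, I would apply \lmref{lm.klainerman} directly to the function $\psi(x) = \rho(f)(t,x)$, which depends only on $(t,x)$. This yields
$$
|\rho(f)|(t,x) \lesssim \frac{1}{(1+t+|x|)^n}\sum_{|\alpha|\leq n,\ Z^{\alpha}\in\Gamma^{|\alpha|}}\|Z^{\alpha}\rho(f)\|_{L^1(\mathbb{R}^n_x)},
$$
where the $Z^\alpha$ appearing here are the \emph{macroscopic} vector fields in $\Gamma$.

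Next, I would convert the macroscopic fields acting on $\rho(f)$ into microscopic fields acting on $f$. Since $\rho(f)$ is independent of $v$, the $v$-components of the microscopic fields in $\gamma$ annihilate $\rho(f)$, so the macroscopic action of $Z^\alpha \in \Gamma^{|\alpha|}$ on $\rho(f)$ coincides with the action of the corresponding microscopic operator in $\gamma^{|\alpha|}$ (see the remark following \defref{def.vector.fields}). Hence part (3) of \lmref{lm.commu.TZ} applies and gives
$$
Z^{\alpha}\rho(f) = \rho(Z^{\alpha}f) + \sum_{|\beta|\leq |\alpha|-1} c'_{\alpha\beta}\, \rho(Z^{\beta}f),
$$
for the microscopic $Z^\alpha, Z^\beta \in \gamma^{\cdot}$ on the right-hand side.

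Finally, for each term, Fubini's theorem gives
$$
\|\rho(Z^{\alpha}f)\|_{L^1(\mathbb{R}^n_x)} \leq \int_{\mathbb{R}^n_x}\int_{\mathbb{R}^n_v}|Z^{\alpha}f|(t,x,v)\, {\rm d}v\, {\rm d}x = \|Z^{\alpha}f\|_{L^1(\mathbb{R}^n_x\times\mathbb{R}^n_v)}.
$$
Substituting these bounds into the Klainerman--Sobolev estimate and absorbing the lower-order terms into the full sum over $|\alpha|\leq n$ produces the claimed inequality. There is no real obstacle: the argument is essentially a bookkeeping exercise that combines the two lemmas already established, the only subtle point being the identification of macroscopic versus microscopic actions on the $v$-independent function $\rho(f)$, which is resolved by the commutation formula above.
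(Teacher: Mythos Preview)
Your proposal is correct and follows exactly the approach the paper indicates: apply the $L^1$ Klainerman--Sobolev inequality (\lmref{lm.klainerman}) to $\psi=\rho(f)$, then use part (3) of \lmref{lm.commu.TZ} together with Fubini to pass from $\|Z^\alpha\rho(f)\|_{L^1_x}$ to $\|Z^\alpha f\|_{L^1_{x,v}}$. The paper gives no further detail beyond naming these two lemmas, so your write-up is in fact more explicit than the original.
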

 
 Moreover, with the help of  \lmref{lm.xdx.n}, we can get better decay for the derivatives
 
 \begin{proposition}
 	For any sufficiently regular function $f(x,v)$ and multi-index $\alpha$, we have,
 	\begin{equation} \label{ks:id}
 	|\rho(\partial_x^{\alpha}f)|(x)\lesssim \frac{1}{(1+t+|x|)^{n+|\alpha|}}\sum_{|\beta|\leq n +|\alpha|, Z^{\beta}\in\gamma^{|\beta|}}\|Z^{\beta}f\|_{L^1(\mathbb{R}^n_{x}\times\mathbb{R}^n_v)}.
 	\end{equation}
 \end{proposition}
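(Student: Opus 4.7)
The plan is to combine the $L^1$ Klainerman--Sobolev estimate of the previous proposition with the pointwise weighted identity from \lmref{lm.xdx.n} in order to extract $|\alpha|$ extra powers of $(1+t+|x|)^{-1}$. The starting observation is that $\rho(\partial_x^\alpha f)(t,x) = \partial_x^\alpha \rho(f)(t,x)$, since $\partial_x^\alpha$ does not act on $v$; hence it suffices to estimate $\partial_x^\alpha \psi$ pointwise for $\psi = \rho(f)$, which is a function of $(t,x)$ alone.

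I would split into the regions $t+|x| \ge 1$ and $t+|x| \le 1$. In the far region, \lmref{lm.xdx.n} applied at the point $(t,x)$ yields
$$(t+|x|)^{|\alpha|} |\partial_x^\alpha \psi(t,x)| \lesssim \sum_{|\beta| \le |\alpha|, Z^\beta \in \Gamma^{|\beta|}} |Z^\beta \psi(t,x)|.$$
To each term on the right I would apply \lmref{lm.klainerman}, producing $L^1_x$ norms of $Z^\gamma Z^\beta \psi$ with $|\gamma| \le n$; collapsing $Z^\gamma Z^\beta$ into a single $Z^{\gamma'}$ of order $|\gamma'| \le n+|\alpha|$ via \lmref{lm.commu.ZZ}, and using $t+|x| \sim 1+t+|x|$ here, gives the desired $(1+t+|x|)^{-n-|\alpha|}$ weight. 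In the near region, the target weight $(1+t+|x|)^{-n-|\alpha|}$ is comparable to $(1+t+|x|)^{-n}$, so \lmref{lm.klainerman} applied directly to $\partial_x^\alpha \psi$, combined with \lmref{lm.commu.ZZ} to move $\partial_x^\alpha$ through $Z^\beta$ (which is clean since $\partial_{x^i} \in \Gamma$), is already sufficient.

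The last step is to convert $L^1_x$ bounds for $Z^{\gamma'} \rho(f)$ into $L^1_{x,v}$ bounds for the microscopic $Z^{\gamma'} f$. This follows from part 3 of \lmref{lm.commu.TZ}, which writes the macroscopic $Z^{\gamma'}$ acting on $\rho(f)$ as a finite sum of $\rho(Z^{\gamma''} f)$ for the corresponding microscopic operators with $|\gamma''| \le |\gamma'|$, together with Fubini and the pointwise inequality $|\rho(g)| \le \rho(|g|)$.

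The only real obstacle is careful bookkeeping of multi-index orders between the macroscopic vector fields in $\Gamma$ (natural for \lmref{lm.xdx.n} and \lmref{lm.klainerman}) and the microscopic ones in $\gamma$ (in which the final bound is stated); each reduction costs at most the expected number of derivatives, so the total stays bounded by $n+|\alpha|$.
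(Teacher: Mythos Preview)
Your proposal is correct and follows exactly the route the paper indicates: the paper does not spell out a proof here, merely stating that the estimate follows from the previous proposition together with \lmref{lm.xdx.n}, and your argument makes this precise, including the near/far splitting needed because \lmref{lm.xdx.n} yields the factor $(t+|x|)^{|\alpha|}$ rather than $(1+t+|x|)^{|\alpha|}$. The conversion from macroscopic to microscopic norms via part 3 of \lmref{lm.commu.TZ} is also the intended mechanism.
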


We also recall the following decay estimates from \cite[Proposition 3.3]{Smulevici.V-P}. 
\begin{proposition}
	For any sufficiently regular function $f(x,v)$, we have,
	\begin{equation}\label{ks:ab}
	\rho(|f|)(x)\lesssim \frac{1}{(1+t+|x|)^n}\sum_{|\alpha|\leq n, Z^{\alpha}\in\gamma^{|\alpha|}}\|Z^{\alpha}f\|_{L^1(\mathbb{R}^n_{x}\times\mathbb{R}^n_v)}.
	\end{equation}
\end{proposition}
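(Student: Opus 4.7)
The plan is to apply the $L^1$ Klainerman-Sobolev inequality of \lmref{lm.klainerman} to the function $\psi(t,x):=\rho(|f|)(t,x)$, viewed as a function of $x$ for fixed $t$. This reduces the problem to proving the intermediate estimate
$$
\|Z^\alpha\rho(|f|)\|_{L^1_x}\;\lesssim \sum_{|\beta|\leq|\alpha|,\,Y^\beta\in\gamma^{|\beta|}}\|Y^\beta f\|_{L^1_{x,v}}\qquad \forall\,Z^\alpha\in\Gamma^{|\alpha|},\ |\alpha|\leq n,
$$
where $\Gamma$ is the family of macroscopic vector fields; \lmref{lm.klainerman} applied to $\rho(|f|)$ then delivers \eqref{ks:ab} at once.

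I would prove the intermediate estimate by duality. For any smooth $\varphi=\varphi(x)$ with $\|\varphi\|_{L^\infty_x}\leq 1$,
$$
\int (Z^\alpha\psi)(x)\varphi(x)\,dx\;=\;\int\int |f|(x,v)\,(Z^\alpha)^*\varphi(x)\,dv\,dx.
$$
The key point is that $\varphi$ is $v$-independent, so each macroscopic $Z\in\Gamma$ acts on $\varphi$ identically to its microscopic counterpart $Y\in\gamma$ (they differ only by $v$-derivatives, which annihilate $\varphi$). Thus $(Z^\alpha)^*\varphi$ coincides with $(Y^\alpha)^*\varphi$ as a function on $\mathbb{R}^{2n}_{x,v}$. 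Each $Y\in\gamma$ has constant divergence on $\mathbb{R}^{2n}_{x,v}$ ($0$ for every element except $S^x+S^v$, whose divergence is $2n$), so $Y^*=-Y+c_Y$ for some bounded constant $c_Y$. Iterated integration by parts in $(x,v)$ then rewrites the right-hand side as a bounded linear combination of $\int\int (Y^\beta|f|)\varphi\,dv\,dx$ for $|\beta|\leq|\alpha|$, and the pointwise bound $|Y|f||\leq |Yf|$ a.e.\ controls each first-order term by $\|Y^\beta f\|_{L^1_{x,v}}$.

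The main obstacle is that $|f|$ is only Lipschitz, so $Y^\beta|f|$ for $|\beta|\geq 2$ is not a function but a distribution that may involve surface measures on $\{f=0\}$. To handle this, I would regularize $|f|$ by $g_\epsilon:=\chi_\epsilon(f)$, with $\chi_\epsilon$ a smooth convex approximation of $|\cdot|$ satisfying $|\chi_\epsilon'|\leq 1$, run the duality computation above with $g_\epsilon$ in place of $|f|$, and pass to the limit $\epsilon\to 0$. The delicate point is that the higher-order chain-rule errors of the form $\chi_\epsilon''(f)\,Y_jf\,Y_kf$ lack any uniform pointwise bound in $\epsilon$; however, since $\chi_\epsilon''$ is supported on $\{|f|\leq\epsilon\}$ and $\nabla_{x,v}f=0$ almost everywhere on $\{f=0\}$ by the Stampacchia property of Sobolev functions, these terms vanish in the $\epsilon\to 0$ limit when paired with the bounded test function $\varphi$, by dominated convergence. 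This passage to the limit is the technical crux of the argument.
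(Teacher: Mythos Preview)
Your proposed intermediate estimate $\|Z^\alpha\rho(|f|)\|_{L^1_x}\lesssim \sum_{|\beta|\le|\alpha|}\|Z^\beta f\|_{L^1_{x,v}}$ is false, and the regularization you sketch does not save it. Take $f(x,v)=x^1\chi(x)\eta(v)$ with $\chi,\eta$ smooth nonnegative bumps and $\chi(0)=1$: then $\rho(|f|)(x)=\|\eta\|_{L^1_v}\,|x^1|\chi(x)$ near $\{x^1=0\}$, so $\partial_{x^1}^2\rho(|f|)$ carries a singular measure on $\{x^1=0\}$ and is not in $L^1_x$, while every $\|Z^\beta f\|_{L^1_{x,v}}$ is finite. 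Your claim that the $\chi_\epsilon''$ terms vanish is wrong already in the model one-variable computation: for $g(s)=s$ one has $\int\chi_\epsilon''(g)(g')^2\varphi\,ds=\int\chi_\epsilon''(s)\varphi(s)\,ds\to 2\varphi(0)$, which is exactly the Dirac contribution to $(|s|)''$. The Stampacchia property you invoke is vacuous here: for smooth $f$ with $\nabla_{x,v}f\ne 0$ on $\{f=0\}$, that level set has Lebesgue measure zero, and an a.e.\ statement on a null set imposes nothing. The factor $\chi_\epsilon''\sim\epsilon^{-1}$ is precisely what defeats the dominated convergence you claim.

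The paper itself does not prove this proposition but cites \cite[Proposition~3.3]{Smulevici.V-P}. The argument there bypasses the obstruction by never differentiating an absolute value more than once: instead of a single $n$-dimensional Sobolev embedding applied to $\rho(|f|)$, one iterates the one-dimensional bound $|h(0)|\lesssim\int(|h|+|h'|)$ coordinate by coordinate, applied at each stage to a function of the form $\rho(|g|)$. Since $|\partial_{x^i}\rho(|g|)|\le\rho(|\partial_{x^i}g|)$ pointwise, after $k$ steps the integrand is a sum of terms $\rho(|\partial_x^{\beta}f|)$ with $|\beta|\le k$; the next derivative then lands on $|\partial_x^\beta f|$, not on $|f|$, and is again absorbed by the same first-order inequality. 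One finishes by converting $(t+|x|)^{|\beta|}\partial_x^\beta$ into combinations of $Z^\gamma$ via \lmref{lm.xdx.n}. Your duality scheme dumps all the derivatives onto $|f|$ at once, which is exactly the move that fails.
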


\subsection{Equations for $Z^{\alpha}\phi$}

For the V-P system, we have

\begin{lemma}\label{lm.poisson.eq}
	Suppose $f$ is sufficiently regular, $\phi$ solves the Poisson equation \eqref{eq.poisson}, then for  any multi-index $\alpha$ and $Z^{\alpha}\in\Gamma^{|\alpha|}$, we have
	\begin{equation}
	\triangle Z^{\alpha}\phi = \sum_{|\beta|\leq|\alpha|}c_{\beta}^{\alpha}\rho(Z^{\beta}f),
	\end{equation}
	where $c_{\beta}^{\alpha}$ are global bounded constants.
\end{lemma}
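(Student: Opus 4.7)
The plan is to apply $Z^{\alpha}$ directly to the Poisson equation $\triangle\phi=\rho(f)$ and then redistribute the vector fields using the three commutation identities collected in \lmref{lm.commu.TZ}. First I would apply $Z^{\alpha}$ to both sides to obtain $Z^{\alpha}\triangle\phi=Z^{\alpha}\rho(f)$. On the right-hand side, the third item of \lmref{lm.commu.TZ} already puts things in the required shape,
$$Z^{\alpha}\rho(f)=\rho(Z^{\alpha}f)+\sum_{|\beta|\leq|\alpha|-1}c'_{\alpha\beta}\rho(Z^{\beta}f),$$
with global constants, so this side needs no further work.

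On the left-hand side, the second item of \lmref{lm.commu.TZ} moves $\triangle$ past $Z^{\alpha}$ at the cost of lower-order remainders,
$$Z^{\alpha}\triangle\phi=\triangle Z^{\alpha}\phi+\sum_{|\beta|\leq|\alpha|-1}c_{\alpha\beta}Z^{\beta}\triangle\phi.$$
The key observation is that every residual $Z^{\beta}\triangle\phi$ is simply $Z^{\beta}\rho(f)$ by \eqref{eq.poisson}, to which the same $\rho$-commutation formula applies, producing further linear combinations of $\rho(Z^{\gamma}f)$ with $|\gamma|\leq|\beta|\leq|\alpha|-1$. Isolating $\triangle Z^{\alpha}\phi$ and collecting all the terms then expresses it as a global-constant linear combination of $\rho(Z^{\beta}f)$'s with $|\beta|\leq|\alpha|$, which is exactly the claimed formula. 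For cleanliness, this rearrangement can be organized as an induction on $|\alpha|$, the base case $|\alpha|=0$ being the Poisson equation itself.

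One minor preliminary point to check is that the identification made in the remark after \defref{def.vector.fields} is in force: since $\phi=\phi(t,x)$ has no $v$-dependence, the microscopic pieces $\partial_{v^i}$, $\Omega_{ij}^{v}$, $S^v$ annihilate $\phi$, so applying $Z^{\alpha}\in\Gamma^{|\alpha|}$ to $\phi$ agrees with applying the corresponding element of $\gamma^{|\alpha|}$, and \lmref{lm.commu.TZ} (stated for $\gamma$) may be invoked. With this in hand there is no real obstacle: the lemma is a direct bookkeeping corollary of the commutation identities for $[Z^{\alpha},\triangle]$ and for $Z^{\alpha}\rho(\cdot)$ already established in \lmref{lm.commu.TZ}.
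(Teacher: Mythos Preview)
Your proposal is correct and follows exactly the approach the paper indicates: the proof is a direct consequence of the commutation identities in \lmref{lm.commu.TZ} together with induction on $|\alpha|$. In fact you have written out more detail than the paper, which simply refers to \lmref{lm.commu.TZ} and induction and defers the explicit computation to the screened Poisson case in \lmref{lm.screen.eq}.
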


\begin{proof}
	The proof follows directly from \lmref{lm.commu.TZ} and by the method of induction. We give the proof in the case of the screened Poisson equation just below.
\end{proof}

Similarly, for the V-Y system, we have

\begin{lemma}\label{lm.screen.eq}
	Suppose $f$ is sufficiently regular, $\phi$ solves the screened Poisson equation \eqref{eq.screened.poisson}, then for  any multi-index $\alpha$ and $Z^{\alpha}\in\Gamma^{|\alpha|}$, we have
	\begin{equation}\label{eq.screened.eq}
	\triangle Z^{\alpha}\phi - Z^{\alpha}\phi= \sum_{k=0}^{|\alpha|}\sum_{|\beta|\leq |\alpha|-k} c_{k,\beta}^{\alpha} G_1*^{(k)}\rho (Z^{\beta}f),
	\end{equation}
	where $c_{k,\beta}^{\alpha}$ are global bounded constants, $G_1*^{(k)}$ represent the $k$ times convolution with $G_1$.
\end{lemma}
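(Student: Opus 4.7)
The plan is induction on $|\alpha|$. The base case $|\alpha|=0$ is exactly the screened Poisson equation $\triangle\phi-\phi=\rho(f)$, which matches the claimed form with $c^{0}_{0,0}=1$ and the convention that $G_1*^{(0)}$ is the identity. For the inductive step, I would write $Z^{\alpha}=Z Z^{\alpha'}$ with $Z\in\Gamma$ and $|\alpha'|=|\alpha|-1$, apply $Z$ to the inductive identity for $Z^{\alpha'}$, and then commute $Z$ past $\triangle-1$ on the left and past $G_1*^{(k)}$ and $\rho$ on the right.

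On the left, $[Z,\triangle]=0$ when $Z\in\{\partial_{x^i},\,t\partial_{x^i},\,\Omega_{ij}^x\}$, so $Z(\triangle-1)Z^{\alpha'}\phi=(\triangle-1)Z^{\alpha}\phi$ with no correction. When $Z=S^x$, Lemma \ref{lm.commu.TZ} gives $[S^x,\triangle]=-2\triangle=-2(\triangle-1)-2\,\mathrm{Id}$, producing two extra contributions. The $-2(\triangle-1)Z^{\alpha'}\phi$ piece is already of the desired form by induction, while $-2Z^{\alpha'}\phi$ is rewritten as $-2\,G_1*(\triangle-1)Z^{\alpha'}\phi$, i.e.\ the inductive sum with every convolution count $k$ shifted to $k+1$; the bound $|\beta|+k\le|\alpha|$ is preserved since the derivative order on $f$ does not change.

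On the right-hand side, I need to analyze $Z(G_1*^{(k)}\rho(Z^{\beta}f))$ for each $Z\in\Gamma$. The translations $\partial_{x^i}$ and $t\partial_{x^i}$ pass through $G_1*^{(k)}$ by translation invariance and through $\rho$ via integration by parts in $v$, turning them into the corresponding microscopic vector fields on $f$. The rotations $\Omega_{ij}^x$ commute with $G_1*^{(k)}$ because $G_1$ is radial. The delicate case is $Z=S^x$, which commutes with neither operation. Here the key auxiliary identity I would establish is $(\triangle-1)(SG_1)=2G_1+(2-n)\delta_0$, following from $[S,\triangle]=-2\triangle$ and the distributional computation $S\delta_0=-n\delta_0$. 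Convolving with $G_1$ yields $SG_1=2\,G_1*G_1+(2-n)G_1$ and hence $S^x(G_1*h)=G_1*S^xh+2\,G_1*h+2\,G_1*^{(2)}h$. Iterating in $k$ expresses $S^x(G_1*^{(k)}h)$ as $G_1*^{(k)}S^xh$ plus a linear combination of $G_1*^{(j)}h$ for $1\le j\le k+1$; applying this to $h=\rho(Z^{\beta}f)$ and combining with $S^x\rho(Z^{\beta}f)=\rho((S^x+S^v)Z^{\beta}f)+n\rho(Z^{\beta}f)$ from Lemma \ref{lm.commu.TZ} brings the output into the required form, with the counts $|\beta'|+j\le|\alpha|$ verified by direct inspection.

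The main obstacle is the scaling field $S^x$: it fails to commute with $\triangle-1$ and with convolution by $G_1$, and the extra convolutions by $G_1$ appearing in the statement of the lemma are precisely what is needed to absorb the correction terms produced each time $S^x$ is moved past $(\triangle-1)^{-1}$. The identity $SG_1=2\,G_1*G_1+(2-n)G_1$ is the single technical ingredient that closes the induction; given this, the remaining vector fields are classical and the bookkeeping for the multi-indices $(\beta,k)$ is routine.
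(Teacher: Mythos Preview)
Your argument is correct, but it proceeds along a different route from the paper. The paper never commutes a vector field through the convolution operator $G_1*$. Instead, at level $|\alpha|$ it writes
\[
(\triangle-1)Z^{\alpha}\phi=[\triangle,Z^{\alpha}]\phi+Z^{\alpha}\rho(f),
\]
uses \lmref{lm.commu.TZ} to expand $[\triangle,Z^{\alpha}]\phi=\sum_{|\beta|\le |\alpha|-1}c_{\beta}Z^{\beta}\triangle\phi=\sum c_{\beta}Z^{\beta}(\phi+\rho(f))$, handles $Z^{\beta}\rho(f)$ and $Z^{\alpha}\rho(f)$ via part~(3) of \lmref{lm.commu.TZ}, and then inserts the inductive expression $Z^{\beta}\phi=\sum G_1*^{(k+1)}\rho(Z^{\cdot}f)$ (which is just one more convolution applied to the level-$|\beta|$ identity). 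The extra $G_1$-convolutions thus arise solely from solving $(\triangle-1)$ on lower-order $Z^{\beta}\phi$, not from moving $S^x$ past $G_1*$.

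Your route is more hands-on: you apply a single $Z$ to the inductive identity and commute it through everything on both sides, which for $Z=S^x$ forces you to establish the distributional identity $(\triangle-1)(SG_1)=2G_1+(2-n)\delta_0$ and its consequence $S^x(G_1*h)=G_1*S^xh+2G_1*h+2G_1*^{(2)}h$. This is correct (and the index bookkeeping $|\beta|+j\le|\alpha|$ does close, since the extra terms satisfy $j\le k+1$ while the inductive constraint gives $k+1\le|\alpha|-|\beta|$). The paper's approach buys you a cleaner induction that needs no special identity for $G_1$; yours buys an explicit commutation formula for $S^x$ with the resolvent, which is of independent interest but not required here.
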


\begin{proof}
	We prove this by induction. For $|\gamma|=1$, we have,
	\begin{align*}
	\triangle Z \phi - Z \phi & = [\triangle, Z] \phi + Z (\triangle \phi -\phi)\\
	& =c_Z\triangle \phi + Z\rho (f)\\
	& =c_Z(\phi +\rho(f)) + Z\rho (f)\\
	&= (c_Z+1)Z\rho(f)+ c_Z\phi
	\end{align*}
	Here $c_Z$ is a constant such that $[\triangle,Z]=c_Z\triangle$. Since $\phi$ solves \eqref{eq.screened.poisson}, we have
	$$\phi=G_1*\rho(f)$$
	By \lmref{lm.commu.TZ}, we have the desired result for $|\alpha|=1$. Suppose that, for $|\alpha|\leq l$,  we have
	$$	\triangle Z^{\alpha}\phi - Z^{\alpha}\phi= \sum_{k=0}^{|\alpha|}\sum_{|\beta|\leq |\alpha|-k} c_{k,\beta}^{\alpha} G_1*^{(k)}\rho (Z^{\beta}f).$$
	Therefore, we have,
	\begin{align}
	Z^{\alpha}\phi & = G_1*\left( \sum_{k=0}^{|\alpha|}\sum_{|\beta|\leq |\alpha|-k} c_{k,\beta}^{\alpha} G_1*^{(k)}\rho (Z^{\beta}f)\right)\\
	& = \sum_{k=1}^{|\alpha|+1}\sum_{|\beta|\leq |\alpha|-k+1} c_{k,\beta}^{\alpha} G_1*^{(k)}\rho (Z^{\beta}f) \label{eq.screen.solution}
	\end{align}
	Then for $|\alpha|=l+1$, we have
	\begin{align*}
	\triangle Z^{\alpha}\phi -  Z^{\alpha}\phi & =[\triangle,Z^{\alpha}]\phi + Z^{\alpha}(\triangle\phi -\phi)\\
	& = \sum_{|\beta|\leq l}c_{\beta}Z^{\beta}\triangle\phi + Z^{\alpha}\rho(f)\\
	& = \sum_{|\beta|\leq l}c_{\beta}Z^{\beta}(\rho(f) +\phi) + Z^{\alpha}\rho(f)
	\end{align*}
	By \eqref{eq.screen.solution} and \lmref{lm.commu.TZ}, we can get \eqref{eq.screened.eq} for $|\alpha|=l+1$, which ends the proof.
	
\end{proof}


\section{The Higher Dimensional Cases}


In this section, we will proof our main theorem (Theorem \ref{thm.high}) in dimension $n\geq 4$ for the V-P system and $n\geq 3$ for the V-Y system. For any $N$, we consider the following energy
\begin{equation}\label{eq.energy}
\mathcal{E}_N[f]:= \sum_{|\alpha|\leq N, Z^{\alpha}\in\gamma^{|\alpha|}} \|Z^{\alpha}f\|_{L^1(\mathbb{R}^n_x\times\mathbb{R}^n_v)},
\end{equation}
The initial data $f_0$ is assumed to be sufficiently smooth and the energy $\mathcal{E}_N[f_0]\leq \epsilon$ where $\epsilon>0$ is small enough.

\subsection{Bootstrap Assumption}
Let $(f, \phi)$ be the classical solution arising from the initial data $f_0$ and let $T\geq0$ be the largest time such that
\begin{equation} \label{ba:hd}
\mathcal{E}_N[f(t)]\leq 2\epsilon,\quad \forall t\in[0,T].
\end{equation}

\subsection{Improving the Bounds on $\|Z^{\alpha}f\|_{L^1(\mathbb{R}^n_x\times\mathbb{R}^n_v)}$}

To get better bounds on $\|Z^{\alpha}f\|_{L^1(\mathbb{R}^n_x\times\mathbb{R}^n_v)}$, we will use the approximate following conservation law,

\begin{lemma}\label{lm.con.law}
	Suppose $f$ is sufficiently regular function of $(t,x,v)$, then for all $t\in[0,T]$, we have
	\begin{equation}
	\|f(t)\|_{L^1(\mathbb{R}^n_x\times\mathbb{R}^n_v)} \leq \|f(0)\|_{L^1(\mathbb{R}^n_x\times\mathbb{R}^n_v)} +\int_0^t\|T_{\phi}(f)(s)\|_{L^1(\mathbb{R}^n_x\times\mathbb{R}^n_v)}{\rm d}s.
	\end{equation}
\end{lemma}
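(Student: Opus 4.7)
The strategy is to exploit the fact that $T_\phi$ is transport along a vector field that is divergence-free on the phase space $\mathbb{R}^n_x\times\mathbb{R}^n_v$. Indeed, $\nabla_x \cdot v = 0$ trivially, and $\nabla_v\cdot(\mu\nabla_x\phi)=0$ because $\phi$ does not depend on $v$. Hence the characteristic flow generated by $T_\phi$ preserves the Lebesgue measure on $\mathbb{R}^n_x\times\mathbb{R}^n_v$, which is exactly what is needed for an $L^1$ conservation-type identity.

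Concretely, I would argue as follows. Writing $g:=T_\phi(f)$, the chain rule applied formally to $|f|$ gives
\begin{equation*}
T_\phi(|f|)=\partial_t|f|+v\cdot\nabla_x|f|+\mu\nabla_x\phi\cdot\nabla_v|f|=\mathrm{sgn}(f)\,T_\phi(f).
\end{equation*}
Integrating this identity over $\mathbb{R}^n_x\times\mathbb{R}^n_v$ and using integration by parts, the $v\cdot\nabla_x|f|$ term vanishes (since $\nabla_x\cdot v=0$ and $|f|$ decays at spatial infinity), and similarly the $\mu\nabla_x\phi\cdot\nabla_v|f|$ term vanishes (since $\nabla_v\cdot\nabla_x\phi=0$ and $|f|$ decays in $v$). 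This yields
\begin{equation*}
\frac{d}{dt}\|f(t)\|_{L^1_{x,v}}=\int_{\mathbb{R}^n_x\times\mathbb{R}^n_v}\mathrm{sgn}(f)\,T_\phi(f)\,dx\,dv\leq\|T_\phi(f)(t)\|_{L^1_{x,v}},
\end{equation*}
and integrating from $0$ to $t$ gives the claim.

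The only technical point is justifying the chain rule for $|f|$, since $u\mapsto|u|$ is not smooth at $0$. The standard fix is to replace $|f|$ by the smooth approximation $\sqrt{f^2+\eta^2}-\eta$, for which the identity $T_\phi(\sqrt{f^2+\eta^2}-\eta)=\frac{f}{\sqrt{f^2+\eta^2}}\,T_\phi(f)$ holds classically. Integration by parts is then unambiguous, and one sends $\eta\to 0^+$ using dominated convergence (the pointwise multiplier is bounded by $1$ uniformly in $\eta$). Alternatively, one may integrate along the measure-preserving characteristic flow of $T_\phi$: if $(X(s;t,x,v),V(s;t,x,v))$ are the backward characteristics ending at $(x,v)$ at time $t$, then $f(t,x,v)=f_0(X(0),V(0))+\int_0^t g(s,X(s),V(s))\,ds$, and taking absolute values and integrating in $(x,v)$ using that the flow is Lebesgue-measure-preserving gives the same inequality.

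I expect the main (and in fact only) obstacle to be this regularity issue at $\{f=0\}$; everything else is a one-line computation once the divergence-free structure of $T_\phi$ is noticed. Since the statement of the lemma already assumes $f$ is sufficiently regular and has enough decay for the boundary terms to vanish, both routes above are admissible, and either one closes the proof.
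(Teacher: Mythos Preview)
Your proposal is correct and follows essentially the same approach as the paper: recognize that $T_\phi$ is transport by a divergence-free vector field on phase space, apply the chain rule to $|f|$, and integrate by parts; the paper likewise handles the non-smoothness of $|\cdot|$ by the regularization $\sqrt{f^2+\eta^2}$. The only minor addition in the paper is that it also multiplies by a spatial cut-off $\chi_\varepsilon(x,v)$ to rigorously justify the vanishing of the boundary terms at infinity, whereas you simply invoke the ``sufficiently regular'' hypothesis for this---a cosmetic difference at most.
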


\begin{proof}
	First, we consider the case where $|f|$ is smooth and has compact support in $(x,v)$ uniformly in $[0,T]$. We have, on the support of $f$, 
	$$T_{\phi}(|f|)=T_{\phi}(f)\frac{f}{|f|}.$$
	We integrate $T_{\phi}(|f|)$ in $(t,x,v)$, 
	\begin{align*}
	\int_0^{t}\int_{x}\int_v T_{\phi}(|f|) & =\int_0^{t}\int_{x}\int_v \Big [\partial_t|f| +\nabla_x\cdot(v|f|) +\mu\nabla_v\cdot (\nabla_x\phi |f|) \Big ]\\
	& =\int_x\int_v |f(t)|-\int_x\int_v|f(0)|
	\end{align*}
	Therefore, we have
	$$	\|f(t)\|_{L^1(\mathbb{R}^n_x\times\mathbb{R}^n_v)} \leq \|f(0)\|_{L^1(\mathbb{R}^n_x\times\mathbb{R}^n_v)} +\int_0^t\|T_{\phi}(f)(s)\|_{L^1(\mathbb{R}^n_x\times\mathbb{R}^n_v)}{\rm d}s.$$
	For more general function $f$, we can approximate $|f|$ by functions $\sqrt{f^2+\eta^2}\chi_\varepsilon(x,v)$, where $\eta > 0$ and $\chi_\varepsilon(x,v)$ is a smooth cut-off function which is $1$ in $B_{x,v}(0,\varepsilon^{-1})$ and $0$ out side of $B_{x,v}(0,2\varepsilon^{-1})$.
	
\end{proof}

From the above lemma, we have that, for $|\alpha|\leq N$,

\begin{equation}
\|Z^{\alpha}f(t)\|_{L^1(\mathbb{R}^n_x\times\mathbb{R}^n_v)} \leq \|Z^{\alpha}f(0)\|_{L^1(\mathbb{R}^n_x\times\mathbb{R}^n_v)} +\int_0^t\|T_{\phi}(Z^{\alpha}f(s))\|_{L^1(\mathbb{R}^n_x\times\mathbb{R}^n_v)} {\rm d}s.
\end{equation}

The initial data is already bounded by $\epsilon$, the main work is to estimate the integral term for $T_{\phi}(Z^{\alpha}f)$. By \lmref{lm.commu.EqT}, we have
$$T_{\phi}(Z^{\alpha}f)=[T_{\phi}, Z^{\alpha}] f = \sum_{i,j=1}^n\sum_{|\beta|\leq |\alpha|-1}\sum_{|\gamma|+|\beta|\leq |\alpha| }C_{\beta\gamma}^{\alpha,ij} \partial_{x^i} Z^{\gamma} \phi \partial_{v^j} Z^{\beta} f.$$
We can always rewrite the $v-$derivatives as linear combination of the commutation vector fields
$$\partial_{v_j}Z^{\beta}f=(t\partial_{x^j} +\partial_{v^j})Z^{\beta}f - t(\partial_{x^j}Z^{\beta}f).$$
Therefore, we have
\begin{equation}\label{eq.high.main}
\|T_{\phi}(Z^{\alpha}f)\|_{L^1(\mathbb{R}^n_x\times\mathbb{R}^n_v)} \lesssim (1+t) \sum_{1\leq|\beta|\leq|\alpha|}\sum_{|\gamma|+|\beta|\leq|\alpha|+1} \|\nabla_x Z^{\gamma}(\phi) Z^{\beta}(f)\|_{L^1(\mathbb{R}^n_x\times\mathbb{R}^n_v)}.
\end{equation}
The only thing we need to estimate now is the term $\|\nabla_x Z^{\gamma}(\phi) Z^{\beta}(f)\|_{L^1(\mathbb{R}^n_x\times\mathbb{R}^n_v)}$.

\subsection{Estimates for $\|\nabla_x Z^{\gamma}(\phi) Z^{\beta}(f)\|_{L^1(\mathbb{R}^n_x\times\mathbb{R}^n_v)}$}

Before we continue the analysis on $\|\nabla_x Z^{\gamma}(\phi) Z^{\beta}(f)\|_{L^1(\mathbb{R}^n_x\times\mathbb{R}^n_v)}$, let us first look at the following lemma

\begin{lemma}\label{lm.decay}
	For any $n\geq 2$, there exist a global constant $C_n>0$ that only depends on $n$, such that for any $x\in\mathbb{R}^n$,
	\begin{equation}
	\int_{\mathbb{R}^n}\frac{1}{|y|^{n-1}(1 + |x+y|)^n}{\rm d}y\leq C_n.
	\end{equation}
\end{lemma}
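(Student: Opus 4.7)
The plan is to split the integration domain into three pieces according to the relative sizes of $|y|$ and $|x+y|$, so as to identify which of the two factors $|y|^{-(n-1)}$ and $(1+|x+y|)^{-n}$ is close to its singular value. Specifically, I would set
\[ R_1=\{|y|\leq 1\}, \qquad R_2=\{|y|>1,\ |x+y|>|y|/2\}, \qquad R_3=\{|y|>1,\ |x+y|\leq |y|/2\}, \]
and bound each contribution separately with constants depending only on $n$.

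On $R_1$ the second factor is trivially bounded by $1$, and the estimate reduces to the observation that $\int_{|y|\leq 1}|y|^{-(n-1)}\,dy<\infty$ since $n-1<n$. On $R_2$ one has $1+|x+y|\geq 1+|y|/2$, so the integrand is dominated by $|y|^{-(n-1)}(1+|y|/2)^{-n}$; integrated over all of $\mathbb{R}^n$ this is a universal constant, because at infinity it decays like $|y|^{-(2n-1)}$ and at the origin like $|y|^{-(n-1)}$, both integrable.

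The delicate region is $R_3$, where $|x+y|$ may be small and $(1+|x+y|)^{-n}$ cannot be discarded. Here the constraint $|x+y|\leq |y|/2$ combined with the triangle inequality forces
\[ \tfrac{2}{3}|x|\leq |y|\leq 2|x|, \]
so in particular $R_3$ is empty when $|x|<1/2$, and otherwise $|y|^{-(n-1)}\lesssim |x|^{-(n-1)}$ uniformly. Changing variables $z=x+y$ and enlarging the resulting domain to the full ball $\{|z|\leq |x|\}$, the contribution of $R_3$ is bounded by
\[ \int_{R_3}\frac{dy}{|y|^{n-1}(1+|x+y|)^n}\lesssim \frac{1}{|x|^{n-1}}\int_{|z|\leq |x|}\frac{dz}{(1+|z|)^n}\lesssim \frac{\log(1+|x|)}{|x|^{n-1}}, \]
where the last step is the elementary computation of the radial integral of $(1+r)^{-n}r^{n-1}$ on $[0,|x|]$. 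Since $n\geq 2$, the function $|x|\mapsto \log(1+|x|)/|x|^{n-1}$ is uniformly bounded on $\{|x|\geq 1/2\}$, which closes the estimate.

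The only real obstacle is $R_3$: one has to simultaneously exploit the forced comparability $|y|\asymp |x|$ to extract the $|x|^{-(n-1)}$ prefactor, and the fact that the residual radial integral of $(1+|z|)^{-n}$ on a ball of radius $|x|$ grows only logarithmically. This tension is also what pins the proof to the range $n\geq 2$ stated in the lemma, since for $n=1$ the prefactor $|x|^{-(n-1)}$ is trivial and the logarithmic blow-up is no longer absorbed.
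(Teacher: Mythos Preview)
Your proof is correct, but your decomposition and your treatment of the singular region differ from the paper's. The paper splits $\mathbb{R}^n$ according to the size of $|y|$ relative to $|x|$, namely $R_1=\{|y|\le\tfrac23|x|\}$, $R_2=\{\tfrac23|x|<|y|<2|x|\}$, $R_3=\{|y|\ge 2|x|\}$; on $R_1\cup R_3$ one has $|x+y|\ge|y|/2$ exactly as in your $R_2$, and the delicate annulus $R_2$ is handled by a further split $\{|x+y|\le |x|^{1/2}\}$ versus $\{|x+y|>|x|^{1/2}\}$, balancing the two pieces to produce a bound of order $|x|^{1-n/2}$. Your route is more direct: you isolate the region $\{|x+y|\le |y|/2\}$ from the start, observe that it forces $|y|\asymp|x|$, change variables $z=x+y$, and read off the bound $\log(1+|x|)/|x|^{n-1}$ from the radial integral. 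This avoids the auxiliary parameter $\delta$ and the two-scale balancing in the paper, and it yields a sharper asymptotic on the hard region (for $n\ge3$ your bound decays faster); the paper's approach, on the other hand, makes the role of the annulus $|y|\asymp|x|$ explicit from the outset and treats small $|x|$ by a separate rough estimate rather than by noting that the region is empty. Both arguments use the constraint $n\ge2$ in the same place---to absorb the slowly growing factor coming from the integral near $z=0$.
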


\begin{proof}
	Without lose of generality, we suppose $x\neq 0$. We divide $\mathbb{R}^n$ into 3 regions: $R_1=\{|y|\leq \frac{2}{3}|x|\}$, $R_2=\{\frac{2}{3}|x|<|y|<2|x|\}$ and $R_3=\{|y|\geq 2|x|\}$. In the region $\{|y|\leq \frac{2}{3}|x|\}$ and $\{|y|\geq 2|x|\}$, we have
	$$|x+y|\geq \big||x|-|y|\big|\geq \frac{|y|}{2},$$
	Therefore, we have 
	$$\int_{R_1\bigcup R_3}\frac{1}{|y|^{n-1}(1 + |x+y|)^n}{\rm d}y\leq \int_{\mathbb{R}^n}\frac{1}{|y|^{n-1}(1 + |y|/2)^n}{\rm d}y = C_0.$$
	In the region $R_2$, first we have a rough estimate,
	\begin{equation*}
	\int_{\frac{2}{3}|x|<|y|<2|x|} \frac{1}{|y|^{n-1}(1 + |x+y|)^n}{\rm d}y\leq \frac{1}{(\frac{2}{3}|x|)^{n-1}}(2|x|)^n\omega_n =2\cdot 3^{n-1} \omega_n |x|,
	\end{equation*}
	which is bounded when $|x|\leq 1$. Here $\omega_n$ denotes the volume of unit ball in $\mathbb{R}^n$. This estimate is bad if $|x|$ is large. Therefore, we need another control of the integral for large $|x|$. Our method is to divide $R_2$ into two subregions: $R_{2,1}=R_2\bigcap\{|y+x|\leq |x|^{1-\delta}\}$ and $R_{2,2}=R_2\bigcap\{|y+x|> |x|^{1-\delta}\}$. Here $\delta>0$ is a fixed number to be chosen.
	In the first subregion, we have
	\begin{equation*}
	\int_{R_{2,1}} \frac{1}{|y|^{n-1}(1 + |x+y|)^n}{\rm d}y\leq \frac{1}{(\frac{2}{3}|x|)^{n-1}}|R_{2,1}| \leq \frac{1}{(\frac{2}{3}|x|)^{n-1}}(|x|^{1-\delta})^n\omega_n =\left(\frac{3}{2}\right)^{n-1} \omega_n |x|^{1-n\delta}.
	\end{equation*}
	In the second subregion, we have
	\begin{equation*}
	\int_{R_{2,2}} \frac{1}{|y|^{n-1}(1 + |x+y|)^n}{\rm d}y\leq \frac{1}{(\frac{2}{3}|x|)^{n-1}|x|^{n(1-\delta)}}(2|x|)^n\omega_n =2\cdot 3^{n-1} \omega_n  |x|^{n\delta-n+1}.
	\end{equation*}
	Let us take $\delta=\frac{1}{2}$, then combining the above two subregions, we have
	$$\int_{R_{2}} \frac{1}{|y|^{n-1}(1 + |x+y|)^n}{\rm d}y\lesssim \omega_n|x|^{1-\frac{n}{2}}.$$
	Since $n\geq 2$, the right hand side is non-increasing when $|x|\rightarrow\infty$. Therefore, the integral is globally bounded by a constant that does not depend on $x$.
	
\end{proof}

With the help of the above lemma, we can estimate the product $\|\nabla_x Z^{\gamma}(\phi) Z^{\beta}(f)\|_{L^1(\mathbb{R}^n_x\times\mathbb{R}^n_v)}$ now.

\begin{lemma}\label{lm.vp.phiZ}
	Suppose $(f,\phi)$ are sufficiently regular solutions to the V-P system. Then, for any multi-index $\gamma,\beta$ with $|\gamma|\leq N$, $|\beta|\leq N$, $|\gamma|+|\beta|\leq N+1$, we have
	\begin{equation}\label{ho}
	\|\nabla_x Z^{\gamma}(\phi) Z^{\beta} (f)\|_{L^1(\mathbb{R}^n_x\times\mathbb{R}^n_v)}\lesssim \sum_{|\beta' |\leq N}\frac{\epsilon}{(1+t)^{n-1}} \|Z^{\beta'}f\|_{L^1(\mathbb{R}^n_x\times\mathbb{R}^n_{v})}
	\end{equation}
\end{lemma}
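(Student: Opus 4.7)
The plan is to exploit the Green's function formula $\phi = G_0 * \rho(f)$ together with the pointwise bound $|\nabla G_0(x)| \lesssim |x|^{-(n-1)}$, avoiding any Calder\'on--Zygmund type argument. Integrating Lemma \ref{lm.poisson.eq} against $G_0$ yields the representation
$$\nabla_x Z^\gamma \phi(t,x) = \sum_{|\beta'| \le |\gamma|} c_{\beta'}^\gamma \, (\nabla G_0) * \rho(Z^{\beta'} f)(t,x),$$
and since $\nabla_x Z^\gamma \phi$ is independent of $v$, Fubini gives
$$\|\nabla_x Z^\gamma \phi \cdot Z^\beta f\|_{L^1_{x,v}} = \int_{\mathbb{R}^n_x} |\nabla_x Z^\gamma \phi(t,x)| \, \rho(|Z^\beta f|)(t,x) \, dx.$$
Because $N \ge 2n$ and $|\gamma|+|\beta| \le N+1$, one always has $\min(|\gamma|,|\beta|) \le N-n$, so I can apply the Klainerman--Sobolev bound \eqref{ks:ab} together with the bootstrap \eqref{ba:hd} to whichever factor has the smaller index, leading to a natural case split.

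In the first case $|\gamma| \le N-n$, every $|\beta'| \le |\gamma|$ satisfies $n+|\beta'| \le N$, so \eqref{ks:ab} gives $\rho(|Z^{\beta'} f|)(t,y) \lesssim \epsilon (1+t+|y|)^{-n}$. Substituting this into the convolution and rescaling via $y=(1+t)z$ reduces the resulting integral to
$$\frac{1}{(1+t)^{n-1}} \int_{\mathbb{R}^n} \frac{dz}{|x/(1+t)-z|^{n-1}(1+|z|)^n},$$
which Lemma \ref{lm.decay} bounds by $(1+t)^{-(n-1)}$ uniformly in $x$. Hence $|\nabla_x Z^\gamma \phi(t,x)| \lesssim \epsilon (1+t)^{-(n-1)}$, and multiplying by $\rho(|Z^\beta f|)$ and integrating in $x$ produces the term $\frac{\epsilon}{(1+t)^{n-1}} \|Z^\beta f\|_{L^1_{x,v}}$ on the right-hand side of \eqref{ho}.

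In the complementary case $|\gamma| > N-n$, the constraint $|\gamma|+|\beta| \le N+1$ forces $|\beta| \le n \le N-n$, so \eqref{ks:ab} instead applies to $Z^\beta f$: $\rho(|Z^\beta f|)(t,x) \lesssim \epsilon (1+t+|x|)^{-n}$. Inserting the convolution representation for $\nabla_x Z^\gamma \phi$ and swapping the order of integration reduces the $x$-integral to $\int |x-y|^{-(n-1)}(1+t+|x|)^{-n} \, dx$, which by the same rescaling trick is again controlled by $(1+t)^{-(n-1)}$ via Lemma \ref{lm.decay}. The remaining $y$-integral is then exactly $\sum_{|\beta'| \le |\gamma|} \|Z^{\beta'} f\|_{L^1_{x,v}}$, matching the right-hand side of \eqref{ho}. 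The point requiring genuine care is tracking that the rescaling yields precisely $(1+t)^{-(n-1)}$ rather than $(1+t)^{-n}$; this is a direct consequence of $\nabla G_0$ having singularity $|x|^{-(n-1)}$ and is exactly the mechanism by which the Green's function method recovers, in a cleaner way, the estimate that \cite{Smulevici.V-P} obtained through Calder\'on--Zygmund together with auxiliary $v$-weighted $L^p$ norms.
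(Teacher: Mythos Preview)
Your proof is correct and follows essentially the same route as the paper: both use the Green's function representation $\nabla_x Z^\gamma\phi = \sum_{|\beta'|\le|\gamma|} c^\gamma_{\beta'}\,(\nabla G_0)*\rho(Z^{\beta'}f)$, the same case split on whether $|\gamma|\le N-n$ or $|\beta|\le N-n$, the same rescaling $y=(1+t)z$, and the same appeal to Lemma~\ref{lm.decay} for the uniform bound. The paper phrases the second case as the change of variable $z=x-y$ rather than ``swapping the order of integration,'' but this is the same computation.
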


\begin{proof}
	The idea of the proof is to use the exact formula of $\nabla_xZ^{\gamma}\phi$. For the V-P system, by \lmref{lm.poisson.eq}, we have
	$$\triangle Z^{\gamma}\phi = \sum_{|\gamma'|\leq|\gamma|}c_{\gamma'}^{\gamma}\rho(Z^{\gamma'}f).$$
	Therefore, we have
	$$Z^{\gamma}\phi(x) = \sum_{|\gamma'|\leq |\gamma|}\int_{\mathbb{R}^n}\frac{c_nc_{\gamma'}^{\gamma}}{|y|^{n-2}}\rho(Z^{\gamma'}f)(x-y){\rm d}y.$$
	Then we have
	$$|\nabla_xZ^{\gamma}\phi(x) | \lesssim \sum_{|\gamma'|\leq |\gamma|}\int_{\mathbb{R}^n}\frac{1}{|y|^{n-1}}\rho(|Z^{\gamma'}f|)(x-y){\rm d}y.$$
	As a consequence, we have
	\begin{align*}
	\|\nabla_x Z^{\gamma}(\phi) Z^{\beta} (f)\|_{L^1(\mathbb{R}^n_x\times\mathbb{R}^n_v)} & =\int_{\mathbb{R}^n}|\nabla_xZ^{\gamma}\phi(x) | \rho(|Z^{\beta}f|(x)){\rm d}x\\& \lesssim \sum_{|\gamma'|\leq |\gamma|}\int_{\mathbb{R}^n}\int_{\mathbb{R}^n}\frac{1}{|y|^{n-1}}\rho(|Z^{\gamma'}f|)(x-y)\rho(|Z^{\beta}f|(x)){\rm d}x{\rm d}y.
	\end{align*}
	
	If $|\gamma|\leq N-n$, then by the Klainerman-Sobolev inequality, we have
	$$\rho(|Z^{\gamma'}f|)(x-y)\lesssim \frac{1}{(1+t+|x-y|)^{n}}\sum_{|\beta''|\leq N}\|Z^{\beta''}f\|_{L^1(\mathbb{R}^n_x\times\mathbb{R}^n_v)}\lesssim \frac{\epsilon}{(1+t+|x-y|)^{n}}.$$
	Therefore, we have
	$$\|\nabla_x Z^{\gamma}(\phi) Z^{\beta} (f)\|_{L^1(\mathbb{R}^n_x\times\mathbb{R}^n_v)}\lesssim \int_{\mathbb{R}^n}\int_{\mathbb{R}^n}\frac{1}{|y|^{n-1}}\frac{\epsilon}{(1+t+|x-y|)^{n}}\rho(|Z^{\beta}f|(x)){\rm d}x{\rm d}y.$$
	Now we do the change of variable $y=(1+t)y'$, then the integral becomes
	$$\int_{\mathbb{R}^n}\frac{1}{|y|^{n-1}}\frac{\epsilon}{(1+t+|x-y|)^{n}}{\rm d}y=\frac{\epsilon}{(1+t)^{n-1}}\int_{\mathbb{R}^n}\frac{1}{|y'|^{n-1}(1+|y'-\frac{x}{1+t}|)^n}{\rm d}y'.$$
	By \lmref{lm.decay}, we have
	$$\int_{\mathbb{R}^n}\frac{1}{|y|^{n-1}}\frac{\epsilon}{(1+t+|x-y|)^{n}}{\rm d}y\lesssim \frac{\epsilon}{(1+t)^{n-1}}$$
	Therefore, 
	$$\|\nabla_x Z^{\gamma}(\phi) Z^{\beta} (f)\|_{L^1(\mathbb{R}^n_x\times\mathbb{R}^n_v)}\lesssim \sum_{|\beta|\leq N}\frac{\epsilon}{(1+t)^{n-1}} \|Z^{\beta}f\|_{L^1(\mathbb{R}^n_x\times\mathbb{R}^n_v)}$$
	Now, if $|\gamma|>N-n$, since $|\beta|+|\gamma|\leq N+1$ and $N\geq 2n$, we have $|\beta|\leq N-n$. We do change of variable $z=x-y$ and do the same analysis to $\rho(|Z^\beta f|)$, we can get the same decay estimate.\\
	
\end{proof}

For the V-Y system, we can get even better estimates. Let us remind that a solution to \eqref{eq.screen.solution} can be written as
$$\phi(x)=\int_{\mathbb{R}^n}G_1(x-y)\rho(f)(y){\rm d}y,$$
where $G_1(x)$ has the form
$$G_1(x)=\frac{c'_n}{|x|^{\frac{n}{2}-1}}K_{\frac{n}{2}-1}(|x|),$$
Here $c'_n$ is a constant and $K_{\nu}(r)$ is the modified Bessel function of the second kind, which has an integral expression of the form when $r>0$
$$K_{\nu}(r)=\int_0^{\infty}e^{-r\cosh \lambda} \cosh (\nu \lambda){\rm d}\lambda.$$ 
The asymptotics of $K_\nu(r)$  are well known(see \cite{Bessel}). Let us first prove the following simple estimate on $K_\nu(r)$.

\begin{lemma}
	For any $\nu\geq \frac{1}{2}$ and $r>0$, we have
	\begin{equation}\label{eq.Ku}
	K_{\nu}(r)\lesssim \frac{e^{-r}}{\sqrt{r}}\left(1+\frac{1}{r^{\nu-\frac{1}{2}}}\right).
	\end{equation}
\end{lemma}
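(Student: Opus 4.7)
The plan is to factor $e^{-r}$ out of the integral representation via the substitution $s = r(\cosh\lambda - 1)$. This gives $\cosh\lambda = 1 + s/r$, $\sinh\lambda = \sqrt{s(2r+s)}/r$, and $d\lambda = ds/\sqrt{s(2r+s)}$, so
\[
K_\nu(r) = e^{-r}\int_0^\infty e^{-s}\,\cosh(\nu\lambda(s))\,\frac{ds}{\sqrt{s(2r+s)}}.
\]
The $\cosh(\nu\lambda)$ factor is then controlled by $\cosh(\nu\lambda) \leq e^{\nu\lambda} = (\cosh\lambda + \sinh\lambda)^\nu = \bigl(1 + s/r + \sqrt{s(2r+s)}/r\bigr)^\nu$, which is the only place where the growth rate of $\nu$ enters.

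The key step is then to split the integral at $s = r$, separating the regime where the polynomial factor stays $O(1)$ from the exponential tail. For $s \in [0,r]$, both $s/r$ and $\sqrt{s(2r+s)}/r = \sqrt{(s/r)(2+s/r)}$ are bounded by absolute constants, so $\cosh(\nu\lambda) \lesssim 1$; combined with $\sqrt{s(2r+s)} \geq \sqrt{2rs}$, this piece is bounded by
\[
\frac{C}{\sqrt{2r}}\int_0^\infty \frac{e^{-s}}{\sqrt{s}}\,ds \;\lesssim\; \frac{1}{\sqrt{r}},
\]
which after multiplication by $e^{-r}$ produces the first term of \eqref{eq.Ku}. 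For $s \geq r$, the estimates $\sqrt{s(2r+s)} \leq \sqrt{3}\,s$ and $s/r \geq 1$ give $\cosh(\nu\lambda) \lesssim (s/r)^\nu$, while $\sqrt{s(2r+s)} \geq s$; this piece is bounded by
\[
\frac{C}{r^\nu}\int_r^\infty e^{-s} s^{\nu-1}\,ds \;\leq\; \frac{C\,\Gamma(\nu)}{r^\nu},
\]
yielding $e^{-r}/r^\nu = (e^{-r}/\sqrt{r})\,r^{-(\nu-1/2)}$, the second term of \eqref{eq.Ku}. Summing the two contributions gives the claim.

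The only real obstacle is bookkeeping: the polynomial factor $\bigl(1 + s/r + \sqrt{s(2r+s)}/r\bigr)^\nu$ depends on $r$ in a way that is uncontrolled uniformly, so the split at the natural scale $s = r$ is essential to absorb it either into a constant (on $[0,r]$) or into the $\Gamma$-integral at the price of the $r^{-\nu}$ factor (on $[r,\infty)$). The hypothesis $\nu \geq 1/2$ is more than sufficient to make $\Gamma(\nu)$ finite and $\cosh(\nu\lambda)\le e^{\nu\lambda}$ valid for $\lambda\ge 0$.
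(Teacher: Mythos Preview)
Your proof is correct and follows the same overall strategy as the paper: factor out $e^{-r}$ by a substitution and control $\cosh(\nu\lambda)\le e^{\nu\lambda}$. The paper uses the closely related change of variable $u=2\sqrt{r}\sinh(\lambda/2)$ (equivalently $s=u^2/2$ in your notation), which casts the integral in Gaussian form
\[
K_\nu(r)\ \lesssim\ \frac{e^{-r}}{\sqrt{r}}\int_0^\infty e^{-u^2/2}\Bigl(1+\tfrac{u^2}{4r}\Bigr)^{\nu-\frac12}\,du,
\]
and then handles the polynomial factor in one stroke via the elementary inequality $(1+x)^{\nu-1/2}\lesssim 1+x^{\nu-1/2}$ (valid because $\nu\ge\tfrac12$), yielding the two terms of \eqref{eq.Ku} without any splitting of the domain. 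Your explicit split at $s=r$ achieves the same effect and makes the two regimes more transparent; the paper's route is marginally more streamlined since the convexity inequality replaces the case analysis.
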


\begin{proof}
	This can be shown through change of variables in the integral form of $K_\nu(r)$. We recall that
	$$K_{\nu}(r)=\int_0^{\infty}e^{-r\cosh \lambda} \cosh (\nu \lambda){\rm d}\lambda.$$ 
	Let us do the change of variable $u=2\sqrt{r}\sinh (\frac{\lambda}{2})$, then we have
	$${\rm d}u = \sqrt{r}\cosh (\frac{\lambda}{2}){\rm d}\lambda,\quad \cosh (\frac{\lambda}{2})=\left(1+\frac{u^2}{4r}\right)^{\frac{1}{2}},$$
	$$\cosh \lambda=1+2\sinh^2 (\frac{\lambda}{2})=1+ \frac{u^2}{2r}.$$
	Because $e^{\nu \lambda}=(e^{\frac{\lambda}{2}})^{2\nu}\leq (2\cosh (\frac{\lambda}{2}))^{2\nu}$, then we have
	$$\cosh (\nu \lambda)\leq e^{\nu \lambda}\lesssim (\cosh (\frac{\lambda}{2}))^{2\nu}.$$
	Therefore, we have
	$$K_{\nu}(r)\lesssim \frac{e^{-r}}{\sqrt{r}}\int_0^{\infty}e^{-\frac{u^2}{2}}\left(1+\frac{u^2}{4r}\right)^{\nu -\frac{1}{2}}{\rm d}u.$$
	Since $\nu\geq 1/2$, we have
	$$\left(1+\frac{u^2}{4r}\right)^{\nu -\frac{1}{2}}\lesssim 1+ \frac{u^{2\nu -1}}{r^{\nu-\frac{1}{2}}}.$$
	Therefore, we have
	$$K_{\nu}(r)\lesssim \frac{e^{-r}}{\sqrt{r}}\left(1+\frac{1}{r^{\nu-\frac{1}{2}}}\right).$$
\end{proof}

\begin{lemma}\label{lm.V-Y.L1}
	For any $1\leq p\leq \infty$ and any function $\Psi\in L^p(\mathbb{R}^n_x)$, we have
	$$\|G_1*\Psi\|_{L^p}\lesssim \|\Psi\|_{L^p},\quad \|\nabla_xG_1*\Psi\|_{L^p}\lesssim \|\Psi\|_{L^p}.$$
\end{lemma}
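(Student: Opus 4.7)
The plan is to reduce both estimates to the single statement that $G_{1}$ and each component of $\nabla_{x}G_{1}$ lie in $L^{1}(\mathbb{R}^{n})$. Once this is established, Young's convolution inequality immediately gives
$$
\|G_{1}*\Psi\|_{L^{p}}\leq\|G_{1}\|_{L^{1}}\|\Psi\|_{L^{p}},\qquad \|\nabla_{x}G_{1}*\Psi\|_{L^{p}}\leq\|\nabla_{x}G_{1}\|_{L^{1}}\|\Psi\|_{L^{p}},
$$
for every $1\leq p\leq\infty$, since $\partial_{x^{i}}(G_{1}*\Psi)=(\partial_{x^{i}}G_{1})*\Psi$ in the sense of distributions (and classically whenever $\Psi$ is regular).

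The first step is to check that $G_{1}\in L^{1}(\mathbb{R}^{n})$. I split the integral into the regions $\{|x|\leq 1\}$ and $\{|x|\geq 1\}$. On the outer region, the previous lemma's bound \eqref{eq.Ku} (valid for $\nu=n/2-1\geq 1/2$ when $n\geq 3$; the case $n=2$ will be treated separately since $\nu=0$) gives
$$
|G_{1}(x)|\lesssim \frac{1}{|x|^{n/2-1}}\cdot \frac{e^{-|x|}}{\sqrt{|x|}}\Bigl(1+\frac{1}{|x|^{(n-3)/2}}\Bigr),
$$
which is integrable at infinity thanks to the exponential factor. On the inner region, I use the classical near-zero asymptotics of the modified Bessel function: $K_{\nu}(r)\sim c_{\nu}r^{-\nu}$ as $r\to 0^{+}$ for $\nu>0$, so $G_{1}(x)\sim |x|^{2-n}$, which is locally integrable in dimension $n\geq 3$. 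For $n=2$ one has $\nu=0$ and $K_{0}(r)\sim -\log r$ near zero, which is still locally integrable; and at infinity the exponential decay of $K_{0}$ (derivable directly from its integral representation) takes care of the outer region.

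The second step handles $\nabla_{x}G_{1}$ via the Bessel recursion $\frac{d}{dr}\bigl(r^{-\nu}K_{\nu}(r)\bigr)=-r^{-\nu}K_{\nu+1}(r)$. This yields
$$
\nabla_{x}G_{1}(x)=-c'_{n}\,\frac{x}{|x|}\cdot\frac{K_{n/2}(|x|)}{|x|^{n/2-1}},
$$
so $|\nabla_{x}G_{1}(x)|\lesssim |x|^{1-n/2}K_{n/2}(|x|)$ with $n/2\geq 1>\frac{1}{2}$ in every dimension under consideration. At infinity, the inequality \eqref{eq.Ku} again gives exponential decay with harmless polynomial prefactors. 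Near the origin, the asymptotics $K_{n/2}(r)\sim c\, r^{-n/2}$ give $|\nabla_{x}G_{1}(x)|\lesssim |x|^{1-n}$, which is integrable on $\{|x|\leq 1\}\subset\mathbb{R}^{n}$ since $\int_{0}^{1}r^{n-1}\cdot r^{1-n}\,dr<\infty$.

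Combining the two steps, both $G_{1}$ and $\nabla_{x}G_{1}$ are in $L^{1}(\mathbb{R}^{n})$, and Young's inequality closes the argument. The only mildly delicate point is the simultaneous control of the Bessel singularity near the origin (which switches between power and logarithmic type depending on whether $\nu>0$ or $\nu=0$) and its exponential decay at infinity; this is precisely why the preceding estimate \eqref{eq.Ku} was needed, together with the standard near-zero asymptotics $K_{\nu}(r)\sim c_{\nu}r^{-\nu}$ (resp.\ $K_{0}(r)\sim -\log r$) which is classical and does not require a separate proof.
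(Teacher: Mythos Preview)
Your proof is correct and follows essentially the same route as the paper: reduce to Young's inequality, then verify $G_{1},\nabla_{x}G_{1}\in L^{1}$ using the Bessel asymptotics. The only cosmetic difference is that the paper invokes the identity $K_{\nu}'(r)=-\tfrac{1}{2}(K_{\nu-1}(r)+K_{\nu+1}(r))$ whereas you use the equivalent recursion $\tfrac{d}{dr}\bigl(r^{-\nu}K_{\nu}(r)\bigr)=-r^{-\nu}K_{\nu+1}(r)$; your version has the minor advantage of giving $\nabla_{x}G_{1}$ in closed form immediately, and you are also more explicit than the paper about the $n=2$ case (where $\nu=0$ falls outside the hypothesis $\nu\geq\tfrac{1}{2}$ of \eqref{eq.Ku}).
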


\begin{proof}
	This is a direct result from Young's convolution inequality. The only thing we need to show is that $\|G_1\|_{L^1}$ and $\|\nabla_xG_1\|_{L^1}$ are finite. To show this, it suffices to use the estimate \eqref{eq.Ku} and the following relation
	$$\frac{\partial}{\partial r}K_{\nu}(r)=-\frac{1}{2}(K_{\nu-1}(r)+K_{\nu+1}(r)).$$
\end{proof}

We then have the following lemma, similar to \lmref{lm.vp.phiZ}.
\begin{lemma}\label{lm.vy.phiZ}
	Suppose $(f,\phi)$ are sufficiently regular solutions to the V-Y system. Then, for any multi-index $\gamma,\beta$ with $|\gamma|\leq N$, $|\beta|\leq N$, $|\gamma|+|\beta|\leq N+1$, we have
	\begin{equation}\label{eq.V-Y.high}
	\|\nabla_x Z^{\gamma}(\phi) Z^{\beta} (f)\|_{L^1(\mathbb{R}^n_x\times\mathbb{R}^n_v)}\lesssim \sum_{|\beta' |\leq N}\frac{\epsilon}{(1+t)^{n}} \|Z^{\beta'}f\|_{L^1(\mathbb{R}^n_x\times\mathbb{R}^n_{v})}
	\end{equation}
\end{lemma}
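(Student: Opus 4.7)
\textbf{Proof proposal for Lemma \ref{lm.vy.phiZ}.} The plan is to mirror the proof of \lmref{lm.vp.phiZ} but to replace the pointwise kernel analysis (\lmref{lm.decay}) by a direct application of Young's convolution inequality, exploiting the fact that both $G_1$ and $\nabla_x G_1$ lie in $L^1(\mathbb{R}^n)$ by \lmref{lm.V-Y.L1}. This substitution is precisely what produces the extra factor of $(1+t)^{-1}$ compared with the V-P estimate \eqref{ho}: there is no longer a critical singular kernel $|y|^{1-n}$ to absorb via the scaling $y=(1+t)y'$.

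First I would use the explicit representation obtained in the course of proving \lmref{lm.screen.eq} (see \eqref{eq.screen.solution}) to write
$$Z^{\gamma}\phi = \sum_{k=1}^{|\gamma|+1}\sum_{|\gamma'|\leq|\gamma|-k+1} c^{\gamma}_{k,\gamma'}\, G_1 *^{(k)}\rho(Z^{\gamma'}f),$$
so that $\nabla_x Z^{\gamma}\phi$ is a finite linear combination of expressions of the form $\nabla_x G_1 \ast G_1^{\ast (k-1)} \ast \rho(Z^{\gamma'}f)$ with $|\gamma'|\leq|\gamma|$. Young's inequality together with \lmref{lm.V-Y.L1} then gives, for every $p\in[1,\infty]$,
$$\|\nabla_x Z^{\gamma}\phi\|_{L^p(\mathbb{R}^n_x)} \lesssim \sum_{|\gamma'|\leq |\gamma|}\|\rho(Z^{\gamma'}f)\|_{L^p(\mathbb{R}^n_x)}.$$
This is the one workhorse estimate; everything else is just choosing $p$ appropriately.

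Next I would split into two cases. Since $N\geq 2n$ and $|\gamma|+|\beta|\leq N+1$, one always has $\min(|\gamma|,|\beta|)\leq N-n$. In Case 1, when $|\gamma|\leq N-n$, every $|\gamma'|\leq |\gamma|$ satisfies $|\gamma'|+n\leq N$, so the Klainerman--Sobolev estimate \eqref{ks:ab} together with the bootstrap \eqref{ba:hd} yields $\|\rho(|Z^{\gamma'}f|)\|_{L^\infty_x}\lesssim \epsilon(1+t)^{-n}$; applying the workhorse estimate with $p=\infty$ gives $\|\nabla_x Z^{\gamma}\phi\|_{L^\infty_x}\lesssim \epsilon(1+t)^{-n}$, and the desired bound follows from $\|\nabla_x Z^{\gamma}\phi\cdot Z^{\beta}f\|_{L^1_{x,v}}\leq \|\nabla_x Z^{\gamma}\phi\|_{L^\infty_x}\|Z^{\beta}f\|_{L^1_{x,v}}$. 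In Case 2, when $|\beta|\leq N-n$, I would instead apply \eqref{ks:ab} to $Z^{\beta}f$ to get $\|\rho(|Z^{\beta}f|)\|_{L^\infty_x}\lesssim \epsilon(1+t)^{-n}$, and use the workhorse estimate with $p=1$ to conclude $\|\nabla_x Z^{\gamma}\phi\|_{L^1_x}\lesssim \sum_{|\gamma'|\leq N}\|Z^{\gamma'}f\|_{L^1_{x,v}}$; Hölder in $x$ then yields
$$\|\nabla_x Z^{\gamma}\phi\cdot Z^{\beta}f\|_{L^1_{x,v}} \leq \|\rho(|Z^{\beta}f|)\|_{L^\infty_x}\|\nabla_x Z^{\gamma}\phi\|_{L^1_x} \lesssim \frac{\epsilon}{(1+t)^n}\sum_{|\gamma'|\leq N}\|Z^{\gamma'}f\|_{L^1_{x,v}}.$$

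I do not anticipate a serious obstacle: the whole argument is essentially an accounting exercise once one has the $L^1$ integrability of $G_1$ and $\nabla_x G_1$. The only mildly delicate step is making sure the representation in \eqref{eq.screen.solution} is used with the correct range of indices so that Young's inequality can be iterated $k$ times without ever leaving $L^1$; since the constants $c^{\gamma}_{k,\gamma'}$ and the range of summation are already under control from \lmref{lm.screen.eq}, this is straightforward.
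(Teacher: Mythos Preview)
Your proposal is correct and follows essentially the same route as the paper's proof: represent $\nabla_x Z^{\gamma}\phi$ via iterated convolutions with $G_1$ (the paper applies $\nabla_x G_1 *$ to the source from \lmref{lm.screen.eq}, you use \eqref{eq.screen.solution} directly, which is equivalent), invoke \lmref{lm.V-Y.L1} to pass from $\|\nabla_x Z^{\gamma}\phi\|_{L^p}$ to $\sum_{|\gamma'|\leq|\gamma|}\|\rho(Z^{\gamma'}f)\|_{L^p}$, and then split into the same two cases $|\gamma|\leq N-n$ (use $p=\infty$ and Klainerman--Sobolev on $\rho$) versus $|\beta|\leq N-n$ (use $p=1$ and Klainerman--Sobolev on $\rho(|Z^{\beta}f|)$). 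The paper's argument is slightly terser but structurally identical.
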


\begin{proof}
	For the V-Y system, by \lmref{lm.screen.eq}, we have
	$$\triangle Z^{\gamma}\phi - Z^{\gamma}\phi= \sum_{k=0}^{|\gamma|}\sum_{|\gamma'|\leq |\gamma|-k} c_{k,\gamma'}^{\gamma} G_1*^{(k)}\rho (Z^{\gamma'}f),$$
	where $G_1(x)=\frac{c'_n}{|x|^{\frac{n}{2}-1}}K_{\frac{n}{2}-1}(|x|)$, and $K_{\nu}(r)$ is the modified Bessel function of the second kind.
	Therefore, we have
	$$\nabla_xZ^{\gamma}\phi = \nabla_xG_1*\left(\sum_{k=0}^{|\gamma|}\sum_{|\gamma'|\leq |\gamma|-k} c_{k,\gamma'}^{\gamma} G_1*^{(k)}\rho (Z^{\gamma'}f)\right).$$
	Now, if $|\gamma|\leq N-n$, by \lmref{lm.V-Y.L1} and Klainerman-Sobolev inequality, we have
	$$\|\nabla_x Z^{\gamma}\phi\|_{L^\infty}\lesssim \sum_{|\gamma'|\leq N-n}\|\rho(Z^{\gamma'}f)\|_{L^\infty}\lesssim \frac{\epsilon}{(1+t)^n},$$
	which gives \eqref{eq.V-Y.high}. If $|\gamma|>N-n$, since $|\beta| +|\gamma\leq N+1$ and $N\geq 2n$, we have $|\beta|\leq N-n$. Therefore, we have
	$$\|\nabla_x Z^{\gamma}\phi\|_{L^1}\lesssim \sum_{|\gamma'|\leq N}\|\rho(Z^{\gamma'}f)\|_{L^1},\quad \|\rho(Z^{\beta}f)\|_{L^\infty}\lesssim \frac{\epsilon}{(1+t)^n},$$
	which also gives \eqref{eq.V-Y.high}.
\end{proof}

\subsection{Improving the Bootstrap Assumption}

Now, with the estimates for $\|\nabla_x Z^{\gamma}(\phi) Z^{\beta}(f)\|_{L^1(\mathbb{R}^n_x\times\mathbb{R}^n_v)}$, we can finally improve the bound on $\mathcal{E}_N[f]$. For the V-P system, we have
\begin{align*}
\|T_{\phi}(Z^{\alpha}f)(s)\|_{L^1(\mathbb{R}^n_x\times\mathbb{R}^n_v)} & \lesssim (1+s) \sum_{1\leq|\beta|\leq|\alpha|}\sum_{|\gamma|+|\beta|\leq|\alpha|+1} \|\nabla_x Z^{\gamma}(\phi) Z^{\beta}(f)\|_{L^1(\mathbb{R}^n_x\times\mathbb{R}^n_v)}\\
& \lesssim\frac{\epsilon}{(1+s)^{n-2}}\sum_{|\beta'|\leq N}\|Z^{\beta'}f\|_{L^1(\mathbb{R}^n_x\times\mathbb{R}^n_v)}\\
&\lesssim \frac{\epsilon^2}{(1+s)^{n-2}}.
\end{align*}
Therefore, we have
\begin{align*}
\mathcal{E}_N[f(t)]& \leq \mathcal{E}_N[f_0] + C\int_0^t \frac{\epsilon^2}{(1+s)^{n-2}}{\rm d}s,
\end{align*}
where the constant $C$ only depends on $n$ and $N$. Since $n\geq 4$, the integral is globally bounded for any $T$. For $\epsilon$ small enough such that
$$C\int_0^\infty\frac{\epsilon}{(1+s)^{n-2}}{\rm d}s\leq \frac{1}{2},$$
we  have the improved bound
$$\mathcal{E}_N[f(t)]\leq \epsilon + \frac{\epsilon}{2}<2\epsilon,\quad\forall t\in[0,T],$$
which indicates that $T$ is not finite. Similarly, for V-Y system, we have better decay
$$\|T_{\phi}(Z^{\alpha}f)(s)\|_{L^1(\mathbb{R}^n_x\times\mathbb{R}^n_v)}\lesssim \frac{\epsilon^2}{(1+s)^{n-1}},$$
therefore we can get the same result in dimensions $n\geq 3$.

This improves the bootstrap assumption \eqref{ba:hd} and therefore ends the proof of Theorem \ref{thm.high}.


\section{3-Dimensional V-P System and 2-Dimensional V-Y System}


In this section, we will deal with the low dimensional cases, i.e., the V-P system in 3-dimensions and the V-Y system in 2-dimensions, cf~Theorem \ref{thm.low}. The standard vector field method will not work in this low dimensional cases. The main difficulty is that, for example, the V-P system in 3-dimensions, using the vector field method in previous section only provides us the estimate:
$$\mathcal{E}_N[f(t)] \leq \mathcal{E}_N[f_0] + C\int_0^t \frac{\epsilon^2}{1+s}{\rm d}s,$$
where the integral on the right hand side is not uniformly bounded in $t$. To get rid of this problem and slightly improve the estimate, we need to do some modification to our vector fields so that we can get better decay than $\frac{1}{1+s}$ for the error term (see \cite{Smulevici.V-P}). 

In this section, if $n=3$, we work on the V-P system, and if $n=2$ we work on the V-Y system. $f_0$ denotes initial data satisfying the assumptions of Theorem \ref{thm.low} and $(f, \phi)$ denotes the classical solution arising from it.

\subsection{The Modified Vector Fields}

Let us recall that our vector field is composed of
$$\gamma=\Big\{ \partial_{x^i}, t\partial_{x^i} +\partial_{v^i}, \Omega_{ij}^x + \Omega_{ij}^v, S^x+ S^v,1\leq i<j\leq n\Big\},$$
We let $Z^i$, $i=1,..., 2n+ n(n-1)/2 +1$, be an ordering of $\gamma$ such that $Z^i=t\partial_{x^i} +\partial_{v^i}$, $i=1,...,n$.   Let us first look at the commutation with operator $T_\phi$. For each vector field $Z^i$, we can easily calculate that
$$[T_\phi, Z^i] (f)= -\mu\sum_{k=1}^n\partial_{x^k}(Z^i\phi +c_i\phi)\partial_{v^k}f. $$
Here $c_i=-2$ if $Z^i=S^x+S^v$, otherwise, we have $c_i=0$. If we want to write the right hand side as linear combinations of vector fields on $f$, we have to rewrite $\partial_{v^k}$ as
$$\partial_{v^k} = (t\partial_{x^k} +\partial_{v^k}) - t(\partial_{x^k}).$$
Therefore, we gain an additional multiplier $(1+t)$ in our decay estimate, that is the reason why our method will not work for $n=3$ (or $n=2$ for the V-Y system). To avoid this problem, we consider a modification of our vector fields in the following form:
$$Y^i:=Z^i-\sum_{k=1}^n\varphi_k^i(t,x,v)\partial_{x^k},\quad i=1,..., 2n+ n(n-1)/2 +1,$$
where $\varphi^i_k$ are sufficiently smooth functions of $(t,x,v)$ and vanish at $t=0$. To find out what the functions $\varphi_k^i$ should be, let us commute $T_\phi$ with the modified vector field $Y^i$. For each $Y^i$, we have that,
\begin{align}\label{eq.modi.TY.1}
[T_\phi, Y^i](f) & =-\mu\sum_{k=1}^n\partial_{x^k}(Z^i\phi +c_i\phi)\partial_{v^k}f  -\mu\sum_{k=1}^n T_\phi(\varphi_k^i)\partial_{x^k}f +\mu \sum_{k,j=1}^n\varphi_k^i\partial_{x^k}\partial_{x^j}\phi\partial_{v^j}f.
\end{align}
Now, if we rewrite $\partial_{v^k}$ as
$$\partial_{v^k} = (t\partial_{x^k} +\partial_{v^k}) - t(\partial_{x^k}).$$
We will get
\begin{align*}
[T_\phi, Y^i](f) & =-\mu\sum_{k=1}^n\partial_{x^k}(Z^i\phi +c_i\phi)Z^k f +\mu t\sum_{k=1}^n\partial_{x^k}(Z^i\phi +c_i\phi)\partial_{x^k} f \\
& \quad -\mu\sum_{k=1}^n T_\phi(\varphi_k^i)\partial_{x^k}f +\mu \sum_{k,j=1}^n\varphi_k^i\partial_{x^k}\partial_{x^j}\phi Z^j f -\mu \sum_{k,j=1}^n\varphi_k^i\partial_{x^k}(t\partial_{x^j})\phi \partial_{x^j} f 
\end{align*}
The only bad term with addition multiplier of $t$ is $\mu t\partial_{x^k}(Z^i\phi +c_i\phi)\partial_{x^k} f$ when $Z^i\neq \partial_x$. To remove these bad terms, we just need $T_\phi(\varphi_k^i)=\mu t\partial_{x^k}(Z^i\phi +c_i\phi)$. 
\begin{definition}\label{def.modified}
	Let $Z^i$, $i=1,..., 2n+ n(n-1)/2 +1$, be an ordering of $\gamma$. The modified vector fields $Y^i$ have the following form,
	$$Y^i:=Z^i-\sum_{k=1}^n\varphi_k^i(t,x,v)\partial_{x^k},\quad i=1,..., 2n+ n(n-1)/2 +1,$$
	where 
	\begin{enumerate}
		\item $\varphi_k^i\equiv 0$ if $Z^i$ is a translation in space, i.e. $Z^i=\partial_{x^k}$.
		\item If $Z^i$ is not a translations, then $\varphi_k^i(t,x,v)$ is a solution of
		\begin{gather}
		T_\phi(\varphi_k^i)=\mu t\partial_{x^k}(Z^i\phi +c_i\phi),\label{eq.modi.phi.eq}\\
		\varphi_k^i(0,x,v)=0.
		\end{gather}
		Here $c_i=-2$ if $Z^i=S^x+S^v$, otherwise, we have $c_i=0$.
	\end{enumerate}
    The set of modified vector fields is denoted by $\gamma_m$.
\end{definition}

Throughout the paper, we denote by $Y$ a generic modified vector field in $\gamma_m$. For any multi-index $\alpha=(\alpha^1,...,\alpha^k)$ with $k=|\alpha|$, the operator $Y^\alpha\in\gamma^{|\alpha|}_m$ is defined by $Y^{\alpha}=Y^{\alpha^1}Y^{\alpha^2}...Y^{\alpha^k}$. We also denote by $\mathcal{M}$ the set of all functions $\{\varphi_k^i\}$. We also denote by $\varphi$ a generic function in $\mathcal{M}$. We say that $P(\varphi)$ is a multi-linear form of degree $d$ with signature less than $k$ if $P(\varphi)$ has the following form
$$P(\varphi)=\sum_{|\alpha_1|+...+|\alpha_d|\leq k\atop (\varphi_1,...,\varphi_d)\in\mathcal{M}^d}C_{\bar{\alpha}\bar{\varphi}}\prod_{j=1}^d Y^{\alpha_j}(\varphi_j)$$
where $\alpha_j$ are all multi-indices and $C_{\bar{\alpha}\bar{\varphi}}$ are constants with $\bar{\alpha}=(\alpha_1,...,\alpha_d)$ and $\bar{\varphi}=(\varphi_1,...,\varphi_d)$.

\subsection{Properties of Modified Vector Fields}

In this section, we will study the main properties of the modified vector field that will be used later in our proof. 

\begin{lemma}[Higher order commutation formula with $T_\phi$]
	For any multi-index $\alpha$, we have
	\begin{equation}\label{eq.mv.tphi}
	[T_{\phi}, Y^{\alpha}] = \sum_{d=0}^{|\alpha|+1}\sum_{i=1}^{n}\sum_{|\gamma|,|\beta|\leq |\alpha|} P_{d\gamma\beta}^{\alpha,i}(\varphi)\partial_{x^i}Z^{\gamma}(\phi) Y^{\beta},
	\end{equation}
	where $P_{d\gamma\beta}^{\alpha,i}(\varphi)$ are multilinear forms of degree $d$ with signatures less than $k$ such that $k\leq |\alpha|-1$ and $k+|\gamma|+|\beta|\leq |\alpha| +1$.
\end{lemma}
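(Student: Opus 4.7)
The plan is to proceed by induction on $|\alpha|$. The case $|\alpha|=0$ is trivial (the sum is empty since the constraint $k\leq|\alpha|-1=-1$ is vacuous and $[T_\phi,\mathrm{Id}]=0$), and the case $|\alpha|=1$ is essentially the computation preceding Definition~\ref{def.modified}: starting from \eqref{eq.modi.TY.1}, rewrite each $\partial_{v^k}$ as $Z^k-t\partial_{x^k}$, observe that the resulting $t$-multiplied pieces cancel precisely against $-\mu T_\phi(\varphi_k^i)\partial_{x^k}$ thanks to the defining equation $T_\phi(\varphi_k^i)=\mu t\partial_{x^k}(Z^i\phi+c_i\phi)$, and finally convert each remaining $Z^k$ into $Y^k+\sum_l\varphi_l^k\partial_{x^l}$. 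The resulting expression matches \eqref{eq.mv.tphi} with degree at most $2$, signature $k=0$, and $|\gamma|+|\beta|\leq 2$.

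For the inductive step, assume the formula for all multi-indices of length at most $m$ and take $|\alpha|=m+1$. Decompose $Y^\alpha=Y^j Y^{\alpha'}$ with $|\alpha'|=m$ and split
\[
[T_\phi,Y^\alpha]=[T_\phi,Y^j]\,Y^{\alpha'}+Y^j\,[T_\phi,Y^{\alpha'}].
\]
The first summand is handled directly by the base case: it has the form $\sum P\,\partial_{x^i}Z^\gamma(\phi)\,Y^\delta Y^{\alpha'}$, and composing $Y^\delta Y^{\alpha'}=Y^\beta$ gives the desired shape with $|\beta|\leq 1+m=|\alpha|$. For the second summand, I commute $Y^j$ through each factor of $P^{\alpha',i}_{d'\gamma'\beta'}(\varphi)\,\partial_{x^i}Z^{\gamma'}(\phi)\,Y^{\beta'}$. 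Because the first two factors are functions, $[Y^j,\cdot]$ reduces to ordinary $Y^j$-differentiation, producing by Leibniz three contributions: (a) $Y^j(P)\,\partial_{x^i}Z^{\gamma'}(\phi)\,Y^{\beta'}$, (b) $P\,Y^j(\partial_{x^i}Z^{\gamma'}(\phi))\,Y^{\beta'}$, and (c) $P\,\partial_{x^i}Z^{\gamma'}(\phi)\,Y^jY^{\beta'}$.

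Each of these fits the announced form. In (a), the Leibniz action inside $P$ raises the signature from $k'$ to $k'+1$ while keeping the degree and $|\gamma'|,|\beta'|$ fixed. In (b), writing $Y^j=Z^j-\sum_l\varphi_l^j\partial_{x^l}$ and using Lemma~\ref{lm.commu.ZZ} to commute $Z^j$ past $\partial_{x^i}$ yields a combination of $\partial_{x^{i''}}Z^{\gamma''}(\phi)$ with $|\gamma''|\leq|\gamma'|+1$ and either the same $P$ (from the $Z^j$-part) or $P$ multiplied by an extra $\varphi_l^j$ with degree raised by one (from the $-\varphi_l^j\partial_{x^l}$-part). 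In (c), $Y^jY^{\beta'}=Y^\beta$ with $|\beta|=|\beta'|+1$.

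The main obstacle is the bookkeeping that verifies the structural constraints $k\leq|\alpha|-1$, $d\leq|\alpha|+1$, and especially $k+|\gamma|+|\beta|\leq|\alpha|+1$ throughout. From the inductive hypothesis one has $k'\leq m-1$, $d'\leq m+1$, $|\gamma'|,|\beta'|\leq m$, and $k'+|\gamma'|+|\beta'|\leq m+1$. In case (a) the bound becomes $(k'+1)+|\gamma'|+|\beta'|\leq m+2=|\alpha|+1$ with $k'+1\leq m=|\alpha|-1$. In case (b) the analogous sum is $k'+|\gamma''|+|\beta'|\leq k'+|\gamma'|+1+|\beta'|\leq m+2$, and the degree becomes at most $d'+1\leq m+2=|\alpha|+1$. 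In case (c) one has $k'+|\gamma'|+|\beta|=k'+|\gamma'|+|\beta'|+1\leq m+2$. Combined with the contribution of $[T_\phi,Y^j]\,Y^{\alpha'}$, which falls within the same framework, this closes the induction.
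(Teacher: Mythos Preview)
Your proof is correct and follows essentially the same approach as the paper: the base case $|\alpha|=1$ is handled exactly as in the computation leading up to Definition~\ref{def.modified}, and the inductive step uses the identity $[T_\phi,Y^jY^{\alpha'}]=[T_\phi,Y^j]Y^{\alpha'}+Y^j[T_\phi,Y^{\alpha'}]$, which is precisely what the paper invokes. The paper's own proof stops at citing this identity without spelling out the bookkeeping for the constraints on $k$, $d$, $|\gamma|$, $|\beta|$; your careful verification of these in cases (a)--(c) fills in what the paper leaves implicit.
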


\begin{proof}
	Let us first look at the case when $|\alpha|=1$. By \eqref{eq.modi.TY.1} and rewriting $\partial_{v^k}$ as
	$$\partial_{v^k} = (t\partial_{x^k} +\partial_{v^k}) - t(\partial_{x^k}),$$
	we will get, after using the equations on $\varphi_k^i$ \eqref{eq.modi.phi.eq},
	\begin{align*}
	[T_\phi, Y^i](f) 
	&= -\mu\sum_{k=1}^n\partial_{x^k}(Z^i\phi +c_i\phi)Z^k f +\mu \sum_{k,j=1}^n\varphi_k^i\partial_{x^k}\partial_{x^j}\phi Z^j f -\mu \sum_{k,j=1}^n\varphi_k^i\partial_{x^k}(t\partial_{x^j})\phi \partial_{x^j} f
	\end{align*}
	Now, by just rewriting $Z^k$ as
	$$Z^k=Y^k + \sum_{l=1}^n\varphi_l^k\partial_{x^l},$$
	we will get that $[T_\phi,Y^i]$ has the following form
	$$[T_\phi,Y^i]=\sum_{|\gamma|\leq 1\atop 1\leq k,l\leq n}P(\varphi)\partial_{x^k}Z^{\gamma}(\phi)Y^l,$$ where $P(\varphi)$ is a multi-linear form of degree at most 2 with signature 0. So the lemma is true for $|\alpha|=1$. For $|\alpha|\geq 2$, we can prove the lemma by induction with the help of following equality
	$$[T_\phi, Y^iY^\alpha ]=[T_\phi, Y^i]Y^\alpha + Y^i [T_{\phi}, Y^{\alpha}].$$
	
\end{proof}

\begin{lemma}\label{lm.mv.Z.Y}
	For any multi-index $\alpha$, we have
	\begin{equation}
	Z^{\alpha}=\sum_{d=0}^{|\alpha|}\sum_{|\beta|\leq |\alpha|}P_{d\beta}^{\alpha}(\varphi)Y^{\beta},
	\end{equation}
	where $P_{d\beta}^{\alpha}(\varphi)$ are multilinear forms of signature less than $k$ with $k\leq |\alpha|-1$ and $k+|\beta|\leq |\alpha|$.
\end{lemma}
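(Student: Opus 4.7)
The plan is to prove the identity by induction on $|\alpha|$, using the defining relation $Z^i = Y^i + \sum_{k=1}^n \varphi_k^i \partial_{x^k}$ (with $\varphi_k^i \equiv 0$ when $Z^i$ is a pure translation), and then carefully tracking how signature and degree evolve when a modified vector field is pushed past a multilinear form in $\varphi$.

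For the base case $|\alpha|=1$, the definition of $Y^i$ gives $Z^i = Y^i + \sum_k \varphi_k^i\, \partial_{x^k}$. Since each $\partial_{x^k}$ is itself one of the modified vector fields (its correction term is zero), this already has the claimed form: the summand $Y^i$ corresponds to the trivial multilinear form $P\equiv 1$ (degree $0$, signature $0$) and $|\beta|=1$, and each summand $\varphi_k^i\, Y^k$ corresponds to a degree-$1$ form of signature $0$ with $|\beta|=1$. Both satisfy $k\le |\alpha|-1=0$ and $k+|\beta|\le |\alpha|=1$.

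For the inductive step I would write $Z^\alpha = Z^{\alpha_1} Z^{\alpha'}$ with $|\alpha'|=|\alpha|-1$, apply the inductive hypothesis to $Z^{\alpha'}$, and then substitute $Z^{\alpha_1} = Y^{\alpha_1} + \sum_k \varphi_k^{\alpha_1} Y^k$ into the left factor. After distributing we must commute the leading $Y^{\alpha_1}$ (or $Y^k$) past the coefficient $P_{d'\beta'}^{\alpha'}(\varphi)$, using the Leibniz rule
\[
Y\bigl(P(\varphi) Y^{\beta'}\bigr) = Y(P(\varphi))\, Y^{\beta'} + P(\varphi)\, Y Y^{\beta'}.
\]
Each such move either (a) raises the signature of the multilinear form by $1$ while keeping $|\beta'|$ fixed, or (b) keeps the signature fixed while raising $|\beta'|$ by $1$; in both cases the quantity $k+|\beta|$ increases by at most $1$. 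The multiplication by $\varphi_k^{\alpha_1}$ in the second half of the decomposition increases the degree $d$ by $1$ but leaves the signature unchanged (since $\varphi_k^{\alpha_1}$ itself has signature $0$). Since we started with $k'+|\beta'|\le |\alpha'|=|\alpha|-1$ and $k'\le |\alpha'|-1=|\alpha|-2$, after one such step we still have $k+|\beta|\le |\alpha|$ and $k\le |\alpha|-1$, as required.

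The main bookkeeping obstacle is verifying that the signature bound $k\le |\alpha|-1$ is preserved: a priori, applying $Y^{\alpha_1}$ to a form of signature exactly $|\alpha'|-1$ would push it to $|\alpha'|=|\alpha|-1$, which is still admissible, while the contribution $\varphi_k^{\alpha_1} Y^k$ creates a fresh factor of signature $0$ and hence does not threaten the bound. One checks that the only way to saturate $k+|\beta|=|\alpha|$ is through the Leibniz branches, while the bound $k \le |\alpha|-1$ is guaranteed because in any branch that raises $k$ we have not increased $|\beta|$ simultaneously, and the new signature never exceeds $k'+1 \le |\alpha'|=|\alpha|-1$. Once this bookkeeping is checked for a single step, the induction closes immediately.
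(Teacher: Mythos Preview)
Your proof is correct and follows the same route as the paper: the paper's proof consists only of the base case $Z^i = Y^i + \sum_k \varphi_k^i\,\partial_{x^k}$ (noting that $\partial_{x^k}$ is itself a modified vector field) together with the sentence ``the rest can be shown by induction,'' and you have simply unpacked that induction and tracked the signature/degree bookkeeping explicitly. One small notational point: when you write ``each summand $\varphi_k^i\, Y^k$'' you mean $\partial_{x^k}$ viewed as an element of $\gamma_m$, not the $k$-th vector in the ordering (which in the paper's convention is $t\partial_{x^k}+\partial_{v^k}$ minus its correction); the substance of the argument is unaffected.
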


\begin{proof}
	The lemma is trivial for $|\alpha|=1$, since $\displaystyle{Z^i=Y^i +\sum_{k=1}^n\varphi_k^i\partial_{x^k}}$ where $\partial_{x^k}$ is also a modified vector field. The rest can be shown by induction.
\end{proof}

\begin{lemma}\label{lm.mv.rho.Z}
	For all multi-index $\alpha$, we have
	\begin{equation}
	\rho(Z^{\alpha} f) =\sum_{d=0}^{|\alpha|}\sum_{|\beta|\leq |\alpha|}\rho(Q_{d\beta}^{\alpha}(\partial_x \varphi)Y^{\beta} f) +\sum_{j=1}^{|\alpha|}\sum_{d=1}^{|\alpha|+1}\sum_{|\beta|\leq |\alpha|}\frac{1}{t^j}\rho(P_{d\beta}^{\alpha,j}(\varphi) Y^{\beta}(f)),
	\end{equation}
	where $Q_{d\beta}^{\alpha}(\partial_x \varphi)$ are multilinear forms with respect to $\partial_x \varphi$ of signatures less than $k'$ satisfying $k'\leq |\alpha|-1$ and $k' + d +|\beta|\leq |\alpha|$, $P_{d\beta}^{\alpha,j}(\varphi)$ are multilinear froms of degree $d$ with signatures less than $k$ satisfying $k\leq |\alpha| $ and $k+|\beta|\leq |\alpha|$.
\end{lemma}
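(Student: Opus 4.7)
The plan is to argue by induction on $|\alpha|$, using the algebraic tools $Z^i=Y^i+\sum_k\varphi_k^i\partial_{x^k}$, the exchange identity $\partial_{x^k}=\frac{1}{t}(Z^k_{\mathrm{trans}}-\partial_{v^k})$ for $Z^k_{\mathrm{trans}}=t\partial_{x^k}+\partial_{v^k}\in\gamma$, its reverse $\partial_{v^k}=Z^k_{\mathrm{trans}}-t\partial_{x^k}$, integration by parts in $v$ inside the $\rho$ integral, and the observation that $\partial_{x^l}$ is itself a modified vector field (corresponding to $\varphi\equiv 0$). The base case $|\alpha|=0$ reduces to the tautology $\rho(f)=\rho(f)$, which fits the first sum with $Q\equiv 1$.

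For the one-step case $|\alpha|=1$, the splitting $Z^i=Y^i+\sum_k\varphi_k^i\partial_{x^k}$ gives $\rho(Z^if)=\rho(Y^if)+\sum_k\rho(\varphi_k^i\partial_{x^k}f)$. The first summand belongs to the $Q$-sum with $d=0$. For the second, I insert $\partial_{x^k}=\frac{1}{t}(Z^k_{\mathrm{trans}}-\partial_{v^k})$, integrate the $\partial_{v^k}$ piece by parts in $v$, and then rewrite $\partial_{v^k}\varphi_k^i=Z^k_{\mathrm{trans}}\varphi_k^i-t\partial_{x^k}\varphi_k^i$. The resulting $-\rho(\partial_{x^k}\varphi_k^i\cdot f)$ has no $1/t$ factor and sits in the $Q$-sum with $d=1$, $k'=0$; the remaining pieces carry an explicit $1/t$, and after expanding $Z^k_{\mathrm{trans}}$ once more via the definition of modified vector fields, they become contributions $\frac{1}{t}\rho(P(\varphi)Y^\beta f)$ with degrees, signatures and $|\beta|$ all respecting the stated bounds.

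For the inductive step, apply \lmref{lm.mv.Z.Y} to write $Z^{\alpha'}f=\sum_{d,\beta}P^{\alpha'}_{d\beta}(\varphi)Y^\beta f$, and then repeat the base-case manipulations summand by summand: the $d=0$ terms go directly into the $Q$-sum, while every remaining $\partial_{x^k}$ hidden in a factor of the form $Y^{\alpha_j}\varphi_j$ (or produced by expanding an interior $Z^i$) is rewritten via the exchange identity, integration by parts in $v$ transfers any $\partial_v$'s onto the $\varphi$-factors, and the identity $\partial_v\varphi=Z_{\mathrm{trans}}\varphi-t\partial_x\varphi$ routes the result either to the $Q$-sum (as a $\partial_x\varphi$ term, without $1/t$) or to the $P$-sum (with an extra $1/t$ factor). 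The main obstacle is the combinatorial bookkeeping: one must verify that each atomic manipulation either increments the degree $d$ and signature $k'$ of a $Q(\partial_x\varphi)$ while not increasing $|\beta|$, or raises the signature $k$ of a $P(\varphi)$ by one at the cost of an additional $1/t$, so that the invariants $k'+d+|\beta|\le|\alpha|$, $k+|\beta|\le|\alpha|$, $k'\le|\alpha|-1$ and $k\le|\alpha|$ are all preserved along the induction; once these invariants are checked for the atomic steps, the induction closes immediately.
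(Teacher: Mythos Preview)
The paper gives no argument beyond ``by induction'' and a citation to \cite[Lemma 6.3]{Smulevici.V-P}, so your sketch is already more detailed than what the paper supplies. Your toolkit is exactly right --- the splitting $Z^i=Y^i+\sum_k\varphi^i_k\partial_{x^k}$, the exchange identity $\partial_{x^k}=\tfrac{1}{t}(Z^k_{\mathrm{trans}}-\partial_{v^k})$, integration by parts in $v$ inside $\rho$, and $\partial_{v^k}\varphi=Z^k_{\mathrm{trans}}\varphi-t\partial_{x^k}\varphi$ --- and your $|\alpha|=1$ computation is correct and is precisely the mechanism that produces the $Q(\partial_x\varphi)$ versus $\tfrac{1}{t}P(\varphi)$ split.

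The inductive step, however, has a gap as written. Invoking \lmref{lm.mv.Z.Y} first delivers $\rho(Z^{\alpha}f)=\sum\rho\big(P_{d\beta}(\varphi)\,Y^\beta f\big)$ with $P(\varphi)$ a generic product of factors $Y^{\alpha_j}(\varphi_j)$, and at that point there is no visible $\partial_{x}$ acting on the integrand to which your exchange identity can be applied: a factor such as $Y^{l_{\mathrm{trans}}}\varphi$ contains no hidden $\partial_x$, and expanding it as $(t\partial_{x^l}+\partial_{v^l}-\sum_m\varphi^{l_{\mathrm{trans}}}_m\partial_{x^m})\varphi$ introduces a $t$-factor rather than a $1/t$-factor. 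The dichotomy ``$Q(\partial_x\varphi)$ without $1/t$'' versus ``$P(\varphi)$ with $1/t^j$'' is produced only when a fresh block $\varphi\,\partial_{x^l}$ hits the \emph{whole} integrand. A cleaner organization is to induct on the present lemma directly: write $\rho(Z^\alpha f)=\rho(Z^i Z^{\alpha'}f)$ with $|\alpha'|=|\alpha|-1$, use \lmref{lm.commu.TZ} to pull the outer $Z^i$ through $\rho$, apply the inductive hypothesis to $\rho(Z^{\alpha'}f)$, and then let $Z^i=Y^i+\varphi^i_l\partial_{x^l}$ act on each resulting $\rho(Q\,Y^\beta f)$ or $\tfrac{1}{t^j}\rho(P\,Y^\beta f)$ as a whole. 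In that formulation the $\varphi\partial_x$-part always yields $\rho\big(\varphi^i_l\,\partial_{x^l}(h)\big)$, to which your base-case trick applies verbatim, and the invariants $k'+d+|\beta|\le|\alpha|$, $k'\le|\alpha|-1$, $k+|\beta|\le|\alpha|$, $k\le|\alpha|$ are then straightforward to propagate.
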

 
\begin{proof}
	This lemma can be shown by induction. We refer to \cite[Lemma 6.3]{Smulevici.V-P} for the proof.
\end{proof}

\begin{lemma}\label{lm.mv.Y.na}
	We have
	\begin{equation}
	Y^{\alpha}\nabla \phi = Z^{\alpha}\nabla \phi + \frac{1}{t}\sum_{d=1}^{|\alpha|}\sum_{|\beta|\leq |\alpha|}P_{d\beta}^{\alpha}(\varphi)Z^{\beta}\nabla\phi
	\end{equation}
	where $P_{d\beta}^{\alpha}(\varphi)$ are multilinear forms of degree $d$ with signatures less than $k$ satisfying $k\leq |\alpha|-1$ and $k+|\beta|\leq |\alpha|$.
\end{lemma}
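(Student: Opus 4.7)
The engine of the argument is one observation: $\nabla\phi$ depends on $(t,x)$ alone (it is the gradient of a scalar field solving a $v$-independent elliptic equation), and this property is preserved by every $Z^i\in\gamma$, since the $v$-derivative pieces of the $Z^i$'s (namely $\partial_{v^i}$, $\Omega_{ij}^v$, $S^v$) annihilate functions independent of $v$. Therefore every $Z^\beta\nabla\phi$ that arises during the induction is again a function of $(t,x)$ only, and we may freely invoke the conversion identity
\begin{equation*}
\partial_{x^k}\psi \;=\; \tfrac{1}{t}\bigl(t\partial_{x^k}+\partial_{v^k}\bigr)\psi \;=\; \tfrac{1}{t}\,Z^k\psi,\qquad \psi=\psi(t,x),
\end{equation*}
where $Z^k=t\partial_{x^k}+\partial_{v^k}$ is the corresponding uniform-motion vector field in $\gamma$.

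The plan is then a straightforward induction on $|\alpha|$. For the base case $|\alpha|=1$, the very definition $Y^i=Z^i-\sum_k \varphi_k^i\partial_{x^k}$ together with the conversion identity applied to $\nabla\phi$ gives $Y^i\nabla\phi = Z^i\nabla\phi - \tfrac{1}{t}\sum_k \varphi_k^i\,Z^k\nabla\phi$, which matches the claim with a single degree-one, signature-zero multilinear form. For the inductive step, assume the formula at order $|\alpha|$ and apply $Y^i$ to both sides. Two facts drive the expansion: (i) no element of $\gamma$ contains $\partial_t$, hence $Y^i$ does not either, so $Y^i(\tfrac{1}{t})=0$; (ii) on any $Z^\beta\nabla\phi$ the decomposition of $Y^i$ gives $Y^i Z^\beta\nabla\phi = Z^iZ^\beta\nabla\phi - \tfrac{1}{t}\sum_k \varphi_k^i\,Z^kZ^\beta\nabla\phi$. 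Leibniz on $\tfrac{1}{t}P^\alpha_{d\beta}(\varphi)Z^\beta\nabla\phi$ then produces three types of terms: the pure $Z$-part $Z^iZ^\alpha\nabla\phi$ (which is the principal $Z^{|\alpha|+1}\nabla\phi$ contribution on the right-hand side), terms where $Y^i$ hits $P^\alpha_{d\beta}(\varphi)$ (degree $d$ unchanged, signature bumped by $1$), and terms where the conversion identity introduces a new $\tfrac{1}{t}\varphi_k^i$ (degree bumped by $1$, $|\beta|$ bumped by $1$, signature unchanged).

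The remaining work is bookkeeping. Each of the new contributions fits the template on the right-hand side of the lemma, and a direct count of indices verifies the propagation of the constraints $k\leq(|\alpha|+1)-1$ and $k+|\beta|\leq|\alpha|+1$: differentiating $P$ adds $1$ to $k$ (permissible since the old $k\leq|\alpha|-1$), whereas producing a new $\varphi$-factor adds $1$ to both the degree and to $|\beta|$ while leaving $k$ alone, again fitting the new bounds. The main technical obstacle is precisely this combinatorial accounting — in particular, iterating the conversion $\partial_{x^k}=\tfrac{1}{t}Z^k$ naturally generates higher negative powers of $t$ (one per $\varphi$-factor produced), which must be reconciled with the single $\tfrac{1}{t}$ prefactor in the statement. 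This is the same phenomenon that appears explicitly as $\sum_j \tfrac{1}{t^j}$ in \lmref{lm.mv.rho.Z}; one reads the $\tfrac{1}{t}$ here as the leading (worst) power, with any additional $1/t$ factors providing extra decay that is harmlessly absorbed into the multilinear forms $P^\alpha_{d\beta}(\varphi)$. Apart from this accounting, the induction is mechanical and does not require any further input.
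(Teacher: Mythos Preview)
Your induction scheme is the same as the paper's (base case plus induction on $|\alpha|$), and your base case is correct. The gap is in your treatment of the extra $1/t$ powers. By definition the multilinear forms $P^\alpha_{d\beta}(\varphi)$ are linear combinations of products $\prod_j Y^{\alpha_j}(\varphi_j)$ with \emph{constant} coefficients $C_{\bar\alpha\bar\varphi}$; they cannot absorb a stray factor of $1/t$, so ``harmlessly absorbed into the multilinear forms'' is not a valid step. The lemma genuinely asserts a single $\tfrac{1}{t}$ prefactor (contrast with \lmref{lm.mv.rho.Z}, where the $\sum_j t^{-j}$ is written out explicitly), and the induction must reproduce exactly that.

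The fix is simple and you already have the ingredients. The point is that $\partial_{x^k}$ is \emph{itself} an element of $\gamma$. Hence when $Y^i$ hits a term that already carries the $\tfrac{1}{t}$ prefactor, write
\[
Y^i\bigl(Z^\beta\nabla\phi\bigr)\;=\;Z^iZ^\beta\nabla\phi\;-\;\sum_k \varphi_k^i\,\partial_{x^k}Z^\beta\nabla\phi
\]
and \emph{do not} convert: the string $\partial_{x^k}Z^\beta$ is already in $\gamma^{|\beta|+1}$, so no additional $1/t$ is produced (degree goes up by one, signature unchanged, $|\beta|$ goes up by one, and the constraints $k\le|\alpha|$, $k+|\beta|\le|\alpha|+1$ still hold). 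The conversion $\partial_{x^k}=\tfrac{1}{t}(t\partial_{x^k})$ is needed only once, on the principal term $Y^i(Z^\alpha\nabla\phi)$, to manufacture the $\tfrac{1}{t}$ in the first place. With this adjustment the induction closes cleanly with a single $1/t$.
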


\begin{proof}
	For $|\alpha|=1$, we have
	$$Y\nabla \phi= (Z+\varphi\partial_x)\nabla \phi=Z\nabla \phi +\frac{1}{t}\varphi(t\partial_x)\nabla \phi.$$
	The rest can be shown by induction.
\end{proof}

\subsection{Bootstrap Assumptions}

In this section, we will use the energy corresponding to modified vector fields. For $N \ge 2n+3$, we take the following energy
\begin{equation}
\mathcal{E}_N[f]= \sum_{|\alpha|\leq N} \|Y^{\alpha}f\|_{L^1(\mathbb{R}^n_x\times\mathbb{R}^n_v)}.
\end{equation}

	

The modified vector field works for general cases, but here we only consider the low dimensional ones, i.e., if $n=3$, we work on the V-P system, and if $n=2$ we work on the V-Y system.

We consider the following bootstrap assumptions. Let $T\geq 0$ be the largest time so that, for all $t\in[0,T]$, we have
\begin{enumerate}
	\item 
	\begin{equation}\label{eq.bootstrap.1}
	\mathcal{E}_N[f(t)]\leq 2\epsilon.
	\end{equation}
	\item  For any multi-index $\alpha$ with $|\alpha|\leq N-n-2$ and any $Y^\alpha\in\gamma_m^{|\alpha|}$, we have
	\begin{equation}\label{eq.bootstrap.2}
	|Y^{\alpha}\varphi(t,x,v)|\lesssim \epsilon^{\frac{1}{2}} (1+\log(1+t)), \quad \forall (x,v)\in\mathbb{R}^n_x\times\mathbb{R}^n_v.
	\end{equation}
	\item  For any multi-index $\alpha$ with $|\alpha|\leq N-n-3$ and any $Y^\alpha\in\gamma_m^{|\alpha|}$, we have
	\begin{equation}\label{eq.bootstrap.3}
	|Y^{\alpha}\nabla_x\varphi(t,x,v)|\lesssim \epsilon^{\frac{1}{2}}, \quad \forall (x,v)\in\mathbb{R}^n_x\times\mathbb{R}^n_v.
	\end{equation}
	\item  For any multi-index $\alpha$ with $|\alpha|\leq N-n-1$ and any $Z^{\alpha}\in\Gamma^{|\alpha|}$, we have
	\begin{equation}\label{eq.bootstrap.4}
	|\nabla_xZ^{\alpha}\phi(t,x)|\lesssim \frac{\epsilon^{\frac{1}{2}}}{(1+t)^2}, \quad \forall x\in\mathbb{R}^n.
	\end{equation}         
\end{enumerate}

\begin{remark}
	The modified vector field $Y^i=Z^i$ at time $t=0$. Therefore, 
	$$\mathcal{E}_N[f_0]=\sum_{|\alpha|\leq N}\|Z^\alpha f_0\|_{L^1(\mathbb{R}^n_x\times\mathbb{R}^n_v)} \leq \epsilon.$$
\end{remark}

\subsection{Klainerman-Sobolev Inequality for Modified Vector Field}

Based on the bootstrap assumptions on $\varphi$, we can get the Klainerman-Sobolev inequality for the modified vector field. We refer to \cite[Proposition 6.1]{Smulevici.V-P} for the proof. 

\begin{proposition}
	For any sufficiently smooth function $f$, we have
	\begin{equation} \label{ks:mvf}
	\rho(|f|)(t,x)\lesssim\frac{1}{(1+t+|x|)^n} \sum_{|\alpha|\leq n} \|Y^{\alpha}f\|_{L^1(\mathbb{R}^n_x\times\mathbb{R}^n_v)}.
	\end{equation}
	
\end{proposition}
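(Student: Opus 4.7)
The plan is to reduce the claim to the $L^1$-Klainerman--Sobolev inequality already proved for the unmodified microscopic fields (Proposition 2.3), and then to convert each factor $\|Z^\alpha f\|_{L^1}$ on the right-hand side into a combination of $\|Y^\beta f\|_{L^1}$ via \lmref{lm.mv.Z.Y}, controlled through the bootstrap assumption \eqref{eq.bootstrap.2} on $\varphi$.

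Applying Proposition 2.3 to $f$ gives
$$
\rho(|f|)(t,x)\lesssim \frac{1}{(1+t+|x|)^n}\sum_{|\alpha|\leq n,\ Z^\alpha\in\gamma^{|\alpha|}}\|Z^{\alpha}f\|_{L^1(\mathbb{R}^n_x\times\mathbb{R}^n_v)}.
$$
By \lmref{lm.mv.Z.Y}, each $Z^\alpha$ expands as
$$
Z^\alpha=\sum_{d=0}^{|\alpha|}\sum_{|\beta|\leq|\alpha|} P_{d\beta}^\alpha(\varphi)\,Y^\beta,
$$
where $P_{d\beta}^\alpha(\varphi)$ is a multi-linear form of degree $d$ with signature $k\leq|\alpha|-1$, so every $\varphi$-factor is of the form $Y^{\alpha_j}\varphi$ with $|\alpha_j|\leq k\leq n-1$.

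Since $N\geq 2n+3$, we have $n-1\leq N-n-2$, so \eqref{eq.bootstrap.2} applies to each such factor, giving
$$
\|P_{d\beta}^\alpha(\varphi)\|_{L^\infty_{x,v}}\lesssim \epsilon^{d/2}(1+\log(1+t))^{d}.
$$
Pulling this $L^\infty_{x,v}$ bound out of the $L^1$ norm,
$$
\|Z^\alpha f\|_{L^1}\lesssim \sum_{d=0}^{|\alpha|}\epsilon^{d/2}(1+\log(1+t))^d\sum_{|\beta|\leq|\alpha|}\|Y^\beta f\|_{L^1}.
$$
Summing over $|\alpha|\leq n$ and substituting into the Klainerman--Sobolev inequality from the first display yields the stated estimate, provided one absorbs the slowly growing prefactor into the implicit constant (which is admissible in the bootstrap regime, thanks to the smallness $\epsilon^{1/2}\ll 1$).

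The main technical point is the mild logarithmic-in-$t$ growth carried by $\varphi$ through \eqref{eq.bootstrap.2}. Because every nontrivial multi-linear form $P_{d\beta}^\alpha$ with $d\geq 1$ necessarily carries at least one factor of $\epsilon^{1/2}$, this growth is controlled by powers of $\epsilon^{1/2}(1+\log(1+t))$ that are treated as harmless corrections within the bootstrap hierarchy (they are dominated by the decay in $(1+t+|x|)^{-n}$ for the applications). All other ingredients are routine: Hölder's inequality to separate $P_{d\beta}^\alpha(\varphi)$ from $Y^\beta f$, and the index accounting from \lmref{lm.mv.Z.Y} to ensure that at most $n-1$ derivatives ever hit any $\varphi$, so that \eqref{eq.bootstrap.2} is the only bootstrap bound on $\varphi$ needed.
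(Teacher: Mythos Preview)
Your reduction via Proposition~2.3 and \lmref{lm.mv.Z.Y} is structurally sound, but the concluding absorption step is not valid. What you actually obtain is
\[
\rho(|f|)(t,x)\;\lesssim\;\frac{1}{(1+t+|x|)^n}\left(\sum_{d=0}^{n}\bigl[C\epsilon^{1/2}(1+\log(1+t))\bigr]^{d}\right)\sum_{|\beta|\leq n}\|Y^\beta f\|_{L^1_{x,v}},
\]
and for any fixed $\epsilon>0$ the prefactor diverges as $t\to\infty$: smallness of $\epsilon^{1/2}$ does not keep $\epsilon^{1/2}(1+\log(1+t))$ bounded. You have thus proved a weaker inequality carrying a $(1+\log(1+t))^{n}$ loss, not the stated one, whose implicit constant (per the paper's $\lesssim$ convention) must depend only on $n$ and $N$. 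It is true that for the downstream applications such a loss would be harmless---it is dominated by any $(1+t)^{\sigma_0}$ and the bootstrap argument already absorbs those---but that does not close the gap in proving the proposition as written.

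The paper does not supply its own proof here; it defers to Smulevici, where the Klainerman--Sobolev argument (localize, rescale, Sobolev) is redone directly with the modified fields rather than by first invoking the $Z$-inequality and then converting via \lmref{lm.mv.Z.Y}. The difference is not cosmetic: your route through \lmref{lm.mv.Z.Y} necessarily produces undifferentiated factors of $\varphi$, and those are precisely the objects carrying the logarithmic growth in \eqref{eq.bootstrap.2}. In the direct argument the coefficients that enter the Sobolev step are organised through the change-of-variables Jacobian, so that what must be bounded are the quantities $Y^{\alpha}(\partial_x\varphi)$ controlled uniformly by \eqref{eq.bootstrap.3}, not the bare $Y^{\alpha}\varphi$. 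If you want to salvage your strategy you must either accept the weaker statement with the log loss, or replace \lmref{lm.mv.Z.Y} by a decomposition in which only $\partial_x\varphi$ appears---which that lemma, as written, does not provide.
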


\subsection{Estimates for $\|Y^{\alpha}(\varphi)Y^{\beta}(f)\|_{L^1(\mathbb{R}^n_x\times\mathbb{R}^n_v)}$}

Before we prove the main result, let us first estimate the products of type $Y^{\alpha}(\varphi)Y^{\beta}(f)$, which will play an important role in improving the bootstrap assumptions in next subsection. Let us first prove the following lemma,

\begin{lemma}\label{lm.mv.grad.term}
	For any multi-index $\gamma,\alpha$ with $|\gamma|\leq N$, $|\alpha|\leq N-n$, we have
	\begin{equation}\label{hoe}
	\|\nabla_x Z^{\gamma}(\phi) Y^{\alpha} (f)\|_{L^1_{(\mathbb{R}^n_x\times\mathbb{R}^n_v)}}\lesssim \sum_{|\beta|\leq |\gamma|}\frac{\epsilon}{(1+t)^2} \|\rho(Z^{\beta}f)\|_{L^1(\mathbb{R}^n_x)}
	\end{equation}
	where $(f,\phi)$ is a sufficiently smooth solution to V-P system if $n=3$ and V-Y system if $n=2$.
\end{lemma}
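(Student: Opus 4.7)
The plan is to adapt the strategy of \lmref{lm.vp.phiZ} (resp. \lmref{lm.vy.phiZ}) to the modified-vector-field setting. The essential new input is that, since $|\alpha|\le N-n$, the $L^{1}$ Klainerman--Sobolev inequality for modified vector fields \eqref{ks:mvf} together with the bootstrap bound \eqref{eq.bootstrap.1} gives the sharp pointwise decay
\begin{equation*}
\rho(|Y^{\alpha}f|)(t,x)\ \lesssim\ \frac{\epsilon}{(1+t+|x|)^{n}}.
\end{equation*}
On the potential side I would use \lmref{lm.poisson.eq} or \lmref{lm.screen.eq} to express $\nabla_{x}Z^{\gamma}\phi$ as a sum of convolutions of an explicit Green's kernel $K_{\gamma}$ with $\rho(Z^{\gamma'}f)$ for $|\gamma'|\le|\gamma|$. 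Taking absolute values of the convolution (without pulling them inside the $v$-integral defining $\rho$) yields
\begin{equation*}
|\nabla_{x}Z^{\gamma}\phi(x)|\ \lesssim\ \sum_{|\gamma'|\le|\gamma|}\int|K_{\gamma}(x-y)|\,|\rho(Z^{\gamma'}f)(y)|\,dy,
\end{equation*}
which is precisely what produces the $\|\rho(Z^{\beta}f)\|_{L^{1}_{x}}$ factor on the right-hand side of the lemma.

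For the V-P system in dimension $n=3$, the relevant kernel is $|K_{\gamma}(x)|=|\nabla G_{0}(x)|\lesssim |x|^{-(n-1)}=|x|^{-2}$. Starting from the identity
\begin{equation*}
\|\nabla_{x}Z^{\gamma}(\phi)\,Y^{\alpha}(f)\|_{L^{1}_{x,v}}\ =\ \int|\nabla_{x}Z^{\gamma}\phi(x)|\,\rho(|Y^{\alpha}f|)(x)\,dx,
\end{equation*}
I would insert the convolution bound, use Fubini, change variables $y=x-z$, plug in the pointwise bound on $\rho(|Y^{\alpha}f|)$, and rescale $z=(1+t)z'$, exactly as in the proof of \lmref{lm.vp.phiZ}. \lmref{lm.decay} with $n=3$ then yields
\begin{equation*}
\int_{\mathbb{R}^{3}}\frac{dz}{|z|^{2}(1+t+|y+z|)^{3}}\ \lesssim\ \frac{1}{(1+t)^{2}},
\end{equation*}
giving the desired factor $\epsilon(1+t)^{-2}$. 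No splitting on $|\gamma|$ is required because the decay factor always comes from the $Y^{\alpha}f$-side.

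For the V-Y system in dimension $n=2$, each kernel $K_{\gamma}$ obtained from \lmref{lm.screen.eq} has the form $\nabla G_{1}\ast G_{1}^{\ast(k-1)}$ and satisfies $\|K_{\gamma}\|_{L^{1}(\mathbb{R}^{2})}\lesssim 1$ by \lmref{lm.V-Y.L1} and Young's inequality. Performing the same manipulations and then using the crude bound $(1+t+|y+z|)^{-n}\le(1+t)^{-n}$ factors $\|K_{\gamma}\|_{L^{1}}$ out of the inner integral, producing directly the factor $(1+t)^{-n}=(1+t)^{-2}$ without any rescaling step.

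The main technical point I expect to watch carefully is the bookkeeping: one must check that $|\alpha|\le N-n$ leaves exactly the required room to invoke \eqref{ks:mvf} for $\rho(|Y^{\alpha}f|)$, which consumes the remaining $n$ modified derivatives and is where the hypothesis $N\ge 2n+3$ is tight. Once that is in place, the V-P case reduces to a scaling plus \lmref{lm.decay}, and the V-Y case reduces to the $L^{1}$-boundedness of the Yukawa kernel from \lmref{lm.V-Y.L1}, so the rest of the argument is routine.
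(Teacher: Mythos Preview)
Your proposal is correct and matches the paper's own proof, which simply says the argument is the same as \lmref{lm.vp.phiZ} and \lmref{lm.vy.phiZ} with the sole change that the Klainerman--Sobolev inequality for modified vector fields \eqref{ks:mvf} is applied to $\rho(|Y^{\alpha}f|)$; you have spelled out precisely those details, including the key observation (also made in the paper's subsequent remark) that no case split on $|\gamma|$ is needed because $|\alpha|\le N-n$ always allows the decay to be extracted from the $Y^{\alpha}f$ factor. The only inaccuracy is cosmetic: the hypothesis $N\ge 2n+3$ is not what is ``tight'' here---only $|\alpha|+n\le N$ is used, and the extra margin in $N$ is consumed elsewhere (e.g.\ in \lmref{lm.mv.YY}).
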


\begin{proof}
	The proof is exactly the same as \lmref{lm.vp.phiZ} and \lmref{lm.vy.phiZ}, where we need to use the exact formula for $\nabla_xZ^{\gamma}\phi$. The only difference is to use the Klainerman-Sobolev inequality for the modified vector field of $Y^\alpha f$. 
\end{proof}

\begin{remark}
	Note that the norm on the RHS of \eqref{hoe} is weaker than that appearing in \eqref{ho}. The reason is that since $|\alpha| \le N-n$, we can always appeal the Klainerman-Sobolev estimate to control the contribution of $|Y^{\alpha} (f)|$.
\end{remark}

With the help of above lemma, we can show that

\begin{lemma}\label{lm.mv.YY}
	For any fixed small number $\sigma>0$, there exist constants $C_{\sigma}$ and $\epsilon_{\sigma}$ such that, if $\epsilon\leq\epsilon_{\sigma}$, then for all multi-index $\alpha,\beta$ with $|\alpha|\leq N-1$, $|\beta| \leq N$ and $|\alpha| + |\beta| \leq N+1$, we have
	\begin{equation}
	\|Y^{\alpha}(\varphi)(t) Y^{\beta}(f)(t)\|_{L^1(\mathbb{R}^n_x\times\mathbb{R}^n_v)}\leq C_{\sigma}(1+t)^{\sigma}\epsilon,
	\end{equation}
	Moreover, for all multi-index $\alpha,\beta$ with $|\alpha|\leq N-2$, $|\beta| \leq N$ and $|\alpha| + |\beta| \leq N$, and all $1\leq i\leq n$, we have
	\begin{equation}
	\|Y^{\alpha}(\partial_{x^i}\varphi)(t) Y^{\beta}(f)(t)\|_{L^1(\mathbb{R}^n_x\times\mathbb{R}^n_v)}\leq C_{\sigma}\epsilon.
	\end{equation}
	
\end{lemma}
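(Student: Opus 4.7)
The plan is to close an approximate $L^1$ conservation law for the product $g := Y^{\alpha}(\varphi) Y^{\beta}(f)$ and conclude by Gronwall. Since $\varphi|_{t=0}=0$ by \defref{def.modified}, $g(0)=0$, so \lmref{lm.con.law} gives
\begin{equation*}
\|g(t)\|_{L^1} \le \int_0^t \|T_\phi(g)(s)\|_{L^1}\, ds.
\end{equation*}
Using that $T_\phi$ is a first-order derivation, $T_\phi g = (T_\phi Y^\alpha \varphi)\, Y^\beta f + Y^\alpha\varphi\, (T_\phi Y^\beta f)$. I expand both factors with $T_\phi f = 0$, $T_\phi\varphi = \mu t \partial_{x^k}(Z^i\phi+c_i\phi)$, and the commutation formula \eqref{eq.mv.tphi}:
\begin{align*}
T_\phi Y^\beta f &= [T_\phi, Y^\beta] f = \sum P(\varphi)\, \partial_{x^i} Z^\gamma(\phi)\, Y^{\beta'}(f),\\
T_\phi Y^\alpha \varphi &= [T_\phi, Y^\alpha]\varphi + \mu t\, Y^\alpha \partial_{x^k}(Z^i\phi+c_i\phi).
\end{align*}
\lmref{lm.mv.Y.na} then recasts $t Y^\alpha \partial_{x^k}(Z^i\phi+c_i\phi)$ as $t Z^\alpha\nabla(\cdots)$ plus multilinear corrections with extra $\varphi$-factors, so every summand of $T_\phi g$ has the schematic form $\prod_j Y^{\alpha_j}(\varphi) \cdot \partial_x Z^\gamma(\phi) \cdot Y^{\beta'}(f)$, possibly multiplied by a single factor of $t$.

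The critical bookkeeping step is that the degree restrictions in \eqref{eq.mv.tphi} and \lmref{lm.mv.Y.na}, combined with $|\alpha|\le N-1$, $|\beta|\le N$, $|\alpha|+|\beta|\le N+1$, guarantee that at most one factor among $\{Y^{\alpha_j}\varphi,\, Y^{\beta'}f\}$ has order exceeding $N-n-2$. Every other $\varphi$-factor then sits in $L^\infty$ via bootstrap \eqref{eq.bootstrap.2}, contributing $\epsilon^{1/2}\log(1+t)$, and $\partial_x Z^\gamma(\phi)$ is controlled either by bootstrap \eqref{eq.bootstrap.4} in $L^\infty$ or, preferentially, paired with the $f$-factor through \lmref{lm.mv.grad.term} to yield $\epsilon/(1+t)^2$. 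If the high-order factor is $Y^{\beta'}f$, \lmref{lm.mv.grad.term} produces an integrable kernel $\lesssim \epsilon(\log(1+s))^m/(1+s)^2$. If instead it is some $Y^{\alpha_j}\varphi$, I pair it with $Y^{\beta'}f$ and bound their product by $M(s)$, defined as the supremum of $\|Y^{\alpha}\varphi\, Y^{\beta}f\|_{L^1}$ over all admissible $(\alpha,\beta)$. The one distinguished piece, $\mu t \nabla Z^\alpha(\phi)\, Y^\beta f$, is bounded via \lmref{lm.mv.grad.term} by $\lesssim \epsilon^2/(1+s)$, integrating to $\epsilon^2\log(1+t)$; this is the source of the time growth.

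Assembling the estimates and taking the supremum over admissible multi-indices yields a Gronwall-type inequality
\begin{equation*}
M(t) \le C\epsilon + C\epsilon^2\log(1+t) + C\epsilon^{1/2} \int_0^t \frac{(\log(1+s))^m}{(1+s)^2}\, M(s)\, ds,
\end{equation*}
whose kernel is integrable, so Gronwall delivers $M(t) \lesssim \epsilon (1+t)^{C\epsilon}$; choosing $\epsilon_\sigma$ such that $C\epsilon_\sigma \le \sigma$ concludes the first inequality. The second inequality is obtained by running the same scheme on $\partial_{x^i}Y^\alpha\varphi \cdot Y^\beta f$, absorbing the commutator $[Y^\alpha,\partial_{x^i}]$ via \lmref{lm.commu.ZZ} and the first estimate; the extra $\partial_{x^i}$ upgrades the decay of every $\phi$-factor by one power of $(1+s)^{-1}$, so the bad piece with explicit $t$ now scales as $t/(1+s)^3$, integrating to a uniform constant and rendering the Gronwall inequality growth-free. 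The main difficulty I anticipate is the combinatorial bookkeeping in the expansion of $T_\phi g$: one must verify, using the degree constraints in \eqref{eq.mv.tphi} and \lmref{lm.mv.Y.na}, that exactly one factor of each summand is high-order, so that bootstrap \eqref{eq.bootstrap.2} covers the remaining $\varphi$'s and the Gronwall closure succeeds.
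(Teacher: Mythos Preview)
Your overall scheme (apply \lmref{lm.con.law} to $g=Y^{\alpha}(\varphi)Y^{\beta}(f)$, expand $T_\phi g$ via \eqref{eq.mv.tphi} and \lmref{lm.mv.Y.na}, and Gronwall) matches the paper, but there is a genuine gap in your treatment of the ``distinguished piece'' $\mu t\,\nabla_x Z^{\eta}(\phi)\,Y^{\beta}(f)$. You assert that \lmref{lm.mv.grad.term} bounds it by $\epsilon^2/(1+s)$; however, the right-hand side of \lmref{lm.mv.grad.term} is $\tfrac{\epsilon}{(1+s)^2}\sum_{|\eta'|\le|\eta|}\|\rho(Z^{\eta'}f)\|_{L^1}$ with $|\eta'|$ up to $N$, and $\|\rho(Z^{\eta'}f)\|_{L^1}$ is \emph{not} controlled by $\epsilon$ from the bootstrap \eqref{eq.bootstrap.1}, which only bounds $\|Y^{\alpha}f\|_{L^1}$. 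Converting $Z^{\eta'}$ to modified vector fields via \lmref{lm.mv.Z.Y} reintroduces high-order factors $Y^{\alpha'}(\varphi)$, so this term feeds $M(s)$ back into your Gronwall with a kernel of order $1/(1+s)$, not the integrable $\log^m/(1+s)^2$ you claim. (This is also why your conclusion $M(t)\lesssim\epsilon(1+t)^{C\epsilon}$ does not actually follow from the integrable-kernel Gronwall you wrote.) Worse, if you go through \lmref{lm.mv.Z.Y} directly, the remaining low-order $\varphi$-factors contribute $\log$-powers, giving a kernel like $\epsilon\log^{m}(1+s)/(1+s)$ whose integral grows like $\log^{m+1}(1+t)$ and produces super-polynomial growth after exponentiation; the argument does not close.

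The paper avoids this by using \lmref{lm.mv.rho.Z} (not \lmref{lm.mv.Z.Y}) to rewrite $\rho(Z^{\eta'}f)$: that lemma produces terms weighted by $Q(\partial_x\varphi)$, which by bootstrap \eqref{eq.bootstrap.3} are \emph{uniformly} bounded (no $\log$), and terms weighted by $P(\varphi)/t^{j}$, whose extra $t^{-1}$ restores integrability. The $Q(\partial_x\varphi)$-terms naturally land in $\mathcal{G}(t):=\sum\|Y^{\alpha}(\partial_x\varphi)Y^{\beta}f\|_{L^1}$, so the paper sets up a \emph{coupled} system
\[
\mathcal{F}(t)\lesssim \epsilon^{3/2}(1+t)^{\sigma_0}+\epsilon^{1/2}\!\int_0^t\!\frac{\mathcal{F}(s)}{1+s}\,ds+\epsilon\!\int_0^t\!\frac{\mathcal{G}(s)}{(1+s)^{1-\sigma_0}}\,ds,\qquad
\mathcal{G}(t)\lesssim \epsilon+\epsilon^{1/2}\!\int_0^t\!\frac{\mathcal{F}(s)+\mathcal{G}(s)}{(1+s)^{2-\sigma_0}}\,ds,
\]
and closes both estimates \emph{simultaneously}: Gronwall on the first gives $\mathcal{F}\lesssim(\cdots)(1+t)^{C\epsilon^{1/2}}$, substituting into the second yields $\mathcal{G}\lesssim\epsilon$, and feeding back gives $\mathcal{F}\lesssim\epsilon(1+t)^{\sigma}$. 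Your proposal treats $\mathcal{F}$ first in isolation and then $\mathcal{G}$; this ordering cannot work, because closing $\mathcal{F}$ alone requires the input of $\mathcal{G}$ through \lmref{lm.mv.rho.Z}.
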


\begin{proof}
	For simplicity of writing, let us denote
	$$\mathcal{F}(t)= \sum_{|\alpha|\leq N-1}\sum_{|\beta|\leq N\atop |\alpha|+|\beta|\leq N+1}\|Y^{\alpha}(\varphi)(t) Y^{\beta}(f)(t)\|_{L^1(\mathbb{R}^n_x\times\mathbb{R}^n_v)},$$
	$$\mathcal{G}(t)= \sum_{|\alpha|\leq N-2}\sum_{i=1}^n\sum_{ |\alpha|+|\beta|\leq N}\|Y^{\alpha}(\partial_{x^i}\varphi)(t) Y^{\beta}(f)(t)\|_{L^1(\mathbb{R}^n_x\times\mathbb{R}^n_v)}.$$
	We only need to estimate $\mathcal{F}$ and $\mathcal{G}$. First, we have that, if $|\alpha|\leq N-n-2$, by bootstrap assumptions \eqref{eq.bootstrap.1}, \eqref{eq.bootstrap.2}, we have
	\begin{align*}
	\|Y^{\alpha}(\varphi)(t) Y^{\beta}(f)(t)\|_{L^1(\mathbb{R}^n_x\times\mathbb{R}^n_v)} & \lesssim \epsilon^{\frac{1}{2}} (1+\log(1+t))\|Y^{\beta}(f)\|_{L^1(\mathbb{R}^n_x\times\mathbb{R}^n_v)}\\
	& \lesssim (1+t)^{\sigma_0} \epsilon^{\frac{3}{2}},
	\end{align*}
	where $\sigma_0>0$ is a very small constant that is to be fixed later.
	Similarly, for $|\alpha|\leq N-n-3$, we have
	$$\|Y^{\alpha}(\partial_{x^i}\varphi)(t) Y^{\beta}(f)(t)\|_{L^1(\mathbb{R}^n_x\times\mathbb{R}^n_v)}\lesssim \epsilon^{\frac{3}{2}}.$$
	If $|\alpha|>N-n-2$, then we have $|\beta|\leq N-n-1$ since $N\geq2n +3$. We cannot use the bootstrap assumptions \eqref{eq.bootstrap.2} since $|\alpha|$ is too large. However, we can use the Klainerman-Sobolev inequality since $|\beta|$ is not too large. To estimate $\|Y^{\alpha}(\varphi)(t) Y^{\beta}(f)(t)\|_{L^1(\mathbb{R}^n_x\times\mathbb{R}^n_v)}$ (including the special case $Y^{\alpha}=Y^{\alpha'}\partial_{x^i}$), we will use the approximate conservation law in \lmref{lm.con.law}. Since $\varphi=0$ at time $t=0$, we have
	$$\|Y^{\alpha}(\varphi)(t) Y^{\beta}(f)(t)\|_{L^1(\mathbb{R}^n_x\times\mathbb{R}^n_v)} \leq \int_0^t \|T_{\phi}(Y^{\alpha}(\varphi) Y^{\beta}(f))\|_{L^1(\mathbb{R}^n_x\times\mathbb{R}^n_v)}(s){\rm d}s.$$
	So the main issue is to deal with $T_{\phi}(Y^{\alpha}(\varphi) Y^{\beta}(f))$, which can be divided into 3 parts $I_1$, $I_2$ and $I_3$
	\begin{align*}
	T_{\phi}(Y^{\alpha}(\varphi) Y^{\beta}(f)) & =Y^{\alpha}(\varphi)T_{\phi}(Y^{\beta}(f)) + [T_{\phi}, Y^{\alpha}](\varphi) Y^{\beta}(f) + Y^{\alpha}T_{\phi }(\varphi) Y^{\beta}(f)\\
	& =I_1+I_2 +I_3
	\end{align*}
	
	{\bf Estimate for $\|I_1\|_{L^1(\mathbb{R}^n_x\times\mathbb{R}^n_v)}$.} By \eqref{eq.mv.tphi}, we have
	$$I_1=\sum_{d=0}^{|\beta|+1}\sum_{i=1}^{n}\sum_{|\gamma|,|\beta'|\leq |\beta|} P_{d\gamma\beta'}^{\beta,i}(\varphi)\partial_{x^i}Z^{\gamma}(\phi) Y^{\beta'}(f)Y^{\alpha}(\varphi). $$
	Since $|\beta|\leq N-n-1$, the signatures of $P_{d\gamma\beta'}^{\beta,i}(\varphi)$ are less than $N-n-2$. By bootstrap assumptions, we have
	$$|P_{d\gamma\beta'}^{\beta,i}(\varphi)|\lesssim (1+\log(1+t))^{N} \lesssim (1+t)^{\sigma_0},$$
	$$|\partial_{x^i}Z^{\gamma}(\phi)|\lesssim \frac{\epsilon^{\frac{1}{2}}}{(1+t)^2},$$
	where $\sigma_0\in(0,1)$ is a small number to be fixed later. Therefore, we have
	\begin{equation}
	\|I_1\|_{L^1(\mathbb{R}^n_x\times\mathbb{R}^n_v)}\lesssim \frac{\epsilon^{\frac{1}{2}}}{(1+t)^{2-\sigma_0}}\mathcal{F}(t).
	\end{equation}
	
	{\bf Estimate for $\|I_2\|_{L^1(\mathbb{R}^n_x\times\mathbb{R}^n_v)}$.} By \eqref{eq.mv.tphi}, we have
	$$I_2=\sum_{d=0}^{|\alpha|+1}\sum_{i=1}^{n}\sum_{|\gamma|,|\beta'|\leq |\alpha|} P_{d\gamma\beta'}^{\alpha,i}(\varphi)\partial_{x^i}Z^{\gamma}(\phi) Y^{\beta'}(\varphi)Y^{\beta}(f).$$
	Here the multi-linear form $P_{d\gamma\beta'}^{\beta,i}(\varphi)$ has signature less than $k\leq |\alpha|-1$ and $k+|\gamma| + |\beta'| \leq |\alpha| +1\leq N$. Now if $|\gamma|\leq N-n-1$, then
	$$|\partial_{x^i}Z^{\gamma}(\phi)|\lesssim \frac{\epsilon^{\frac{1}{2}}}{(1+t)^2}.$$
	$P_{d\gamma\beta'}^{\beta,i}(\varphi)Y^{\beta'}(\varphi)$ is a multi-linear form with at most one factor $Y^{\alpha'}(\varphi)$ with $N-n-2<|\alpha'|\leq |\alpha|$, while the rest can be uniformly bounded by $(1+\log(1+t))^N\lesssim (1+t)^{\sigma_0}$. Therefore, we have
	$$\|P_{d\gamma\beta'}^{\alpha,i}(\varphi)\partial_{x^i}Z^{\gamma}(\phi) Y^{\beta'}(\varphi)Y^{\beta}(f)\|_{L^1(\mathbb{R}^n_x\times\mathbb{R}^n_v)}\lesssim \frac{\epsilon^{\frac{1}{2}}}{(1+t)^{2-\sigma_0}}\mathcal{F}(t).$$
	If $|\gamma|>N-n-1$, then the bootstrap assumptions tell us
	$$|P_{d\gamma\beta'}^{\beta,i}(\varphi)Y^{\beta'}(\varphi)|\lesssim (1+\log(1+t))^{N} ,$$
	Since $|\gamma|\leq |\alpha|\leq N-1$, by \lmref{lm.mv.grad.term}, we have
	$$\|\partial_{x^i} Z^{\gamma}(\phi) Y^{\beta} (f)\|_{L^1(\mathbb{R}^n_x\times\mathbb{R}^n_v)}\lesssim \sum_{|\eta|\leq |\gamma|}\frac{\epsilon}{(1+t)^2}\|Z^{\eta}f\|_{L^1(\mathbb{R}^n_x\times\mathbb{R}^n_v)} .$$
	By \lmref{lm.mv.Z.Y}, we have
	$$\|Z^{\eta}f\|_{L^1(\mathbb{R}^n_x\times\mathbb{R}^n_v)}\leq \sum_{d'=0}^{|\eta|}\sum_{|\eta'|\leq |\eta|}\|P_{d'\eta'}^{\eta}(\varphi)Y^{\eta'}(f)\|_{L^{1}(\mathbb{R}^n_x\times\mathbb{R}^n_v)}\lesssim (1+\log(1+t))^{N}\mathcal{F}(t).$$
	So we have
	$$\|P_{d\gamma\beta'}^{\alpha,i}(\varphi)\partial_{x^i}Z^{\gamma}(\phi) Y^{\beta'}(\varphi)Y^{\beta}(f)\|_{L^1(\mathbb{R}^n_x\times\mathbb{R}^n_v)}\lesssim \frac{\epsilon(1+\log(1+t))^{2N}}{(1+t)^{2}}\mathcal{F}(t)\lesssim \frac{\epsilon}{(1+t)^{2-\sigma_0}}\mathcal{F}(t).$$
	In summary, we have 
	\begin{equation}
	\|I_2\|_{L^1(\mathbb{R}^n_x\times\mathbb{R}^n_v)}\lesssim \frac{\epsilon^{\frac{1}{2}}}{(1+t)^{2-\sigma_0}}\mathcal{F}(t).
	\end{equation}
	
	{\bf Estimate for $\|I_3\|_{L^1(\mathbb{R}^n_x\times\mathbb{R}^n_v)}$.}  First, for the modified vector field, from \eqref{eq.modi.phi.eq}, we know that
	$$T_{\phi}(\varphi)= t\sum_{i=1}^{n}\sum_{|\eta|\leq 1}c_{Z,i}\partial_{x^i}Z^{\eta}\phi.$$
	By \lmref{lm.mv.Y.na}, we have
	\begin{align*}
	I_3 & =t\sum_{|\eta|\leq |\alpha|+1}c_{\eta,i}\partial_{x^i}Z^{\eta}(\phi)Y^{\beta}(f) +\sum_{d=1}^{|\alpha|}\sum_{|\eta|\leq |\alpha| +1} P_{d\eta}^{\alpha}(\varphi)\partial_{x^i}Z^{\eta}(\phi)Y^{\beta}(f)\\
	&=I_{3,1}+I_{3,2},
	\end{align*}
	where $P_{d\eta}^{\alpha}(\varphi)$ are multi-linear forms of degree $d$ with signatures less than $k$ satisfying $k\leq |\alpha|\leq N-1$ and $k+|\eta|\leq |\alpha|+1$. Now, let's look at $I_{3,2}$. When $|\eta|\leq N-n-1$, we have 
	$$|\partial_{x^i}Z^{\eta}(\phi)|\lesssim \frac{\epsilon^{\frac{1}{2}}}{(1+t)^2},$$
	$$\|P_{d\eta}^{\alpha}(\varphi)Y^{\beta}(f)\|_{L^1(\mathbb{R}^n_x\times\mathbb{R}^n_v)}\lesssim (1+\log(1+t))^N\mathcal{F}(t).$$
	So we have,
	$$\|P_{d\eta}^{\alpha}(\varphi)\partial_{x^i}Z^{\eta}(\phi)Y^{\beta}(f)\|_{L^1(\mathbb{R}^n_x\times\mathbb{R}^n_v)}\lesssim  \frac{\epsilon^{\frac{1}{2}}}{(1+t)^{2-\sigma_0}}\mathcal{F}(t).$$
	When $|\eta|>N-n-1$, we have
	$$|P_{d\eta}^{\alpha}(\varphi)|\lesssim (1+\log(1+t))^N.$$
	Therefore, we have
	$$\|I_3\|_{L^1(\mathbb{R}^n_x\times\mathbb{R}^n_v)}\lesssim (t+1)\sum_{|\eta|\leq |\alpha| +1} \|\partial_{x^i}Z^{\eta}(\phi)Y^{\beta}(f)\|_{L^1(\mathbb{R}^n_x\times\mathbb{R}^n_v)} + \frac{\epsilon^{\frac{1}{2}}}{(1+t)^{2-\sigma_0}}\mathcal{F}(t).$$
	Now we only need to estimate $\|\partial_{x^i} Z^{\eta}(\phi) Y^{\beta} (f)\|_{L^1(\mathbb{R}^n_x\times\mathbb{R}^n_v)}$. By \lmref{lm.mv.grad.term}, we have
	\begin{align*}
	\|\partial_{x^i} Z^{\eta}(\phi) Y^{\beta} (f)\|_{L^1(\mathbb{R}^n_x\times\mathbb{R}^n_v)} & \lesssim \sum_{|\eta'|\leq |\eta|}\frac{\epsilon}{(1+t)^2} \|\rho(Z^{\eta'}f)\|_{L^1(\mathbb{R}^n_x)}\\
	& \lesssim \frac{\epsilon^2}{(1+t)^2} + \sum_{1\leq|\eta'|\leq |\eta|}\frac{\epsilon}{(1+t)^2} \|\rho(Z^{\eta'}f)\|_{L^1(\mathbb{R}^n_x)}.
	\end{align*}
	Now, for $|\eta'|\geq 1$, we can write $Z^{\eta'}f=Z^{\eta''}(Zf)$, where $0\leq|\eta''|= |\eta'|-1\leq N-1$. We apply \lmref{lm.mv.rho.Z} to $Zf$, then we have
	\begin{align*}
	\rho(Z^{\eta'} f) & =\sum_{d=0}^{|\eta''|}\sum_{|\beta'|\leq |\eta''|}\rho(Q_{d\beta'}^{\eta''}(\partial_x \varphi)Y^{\beta'}(Z f)) +\sum_{j=1}^{|\eta''|}\sum_{d=1}^{|\eta''|+1}\sum_{|\beta'|\leq |\eta|}\frac{1}{t^j}\rho(P_{d\beta'}^{\eta'',j}(\varphi) Y^{\beta'}( Zf))\\
	 & =P_1 +P_2
	\end{align*}
	where $Q_{d\beta'}^{\eta''}(\partial_x \varphi)$ are multi-linear forms with respect to $\partial_x \varphi$ of signatures less than $k'$ satisfying $k'\leq |\eta''|-1\leq N-2$ and $k' + d +|\beta'|\leq |\eta''|$, $P_{d\beta'}^{\eta'',j}(\varphi)$ are multilinear froms of degree $d$ with signatures less than $k$ satisfying $k\leq |\eta''| $ and $k+|\beta'|\leq |\eta''|\leq N-1$. For $P_1$, we have
	\begin{align*}
	\rho(Q_{d\beta'}^{\eta''}(\partial_x \varphi)Y^{\beta'}(Z f)) & =\rho\left(Q_{d\beta'}^{\eta''}(\partial_x \varphi)Y^{\beta'}\big(Y(f)-c_Y\varphi \partial_ {x^j}(f)\big)\right) \\
	& =\rho(Q_{d\beta'}^{\eta''}(\partial_x \varphi)(Y^{\beta'}Y(f))-\sum_{|\beta''|\leq |\beta'|}c_{Y\beta''}\rho(Q_{d\beta'}^{\eta''}(\partial_x \varphi)Y^{\beta''}(\varphi) Y^{\beta'-\beta''}\partial_ {x^j}(f))).
	\end{align*}
	Since $k'\leq N-2$ and $k'+d +|\beta'|+1\leq |\eta''| +1<N+1$, we have
	$$\|\rho(Q_{d\beta'}^{\eta''}(\partial_x \varphi)(Y^{\beta'}Y(f))\|_{L^1(\mathbb{R}^n_x)}\lesssim \mathcal{G}(t).$$
	For the second part of $P_1$, we have either $k'+d \leq N-n-2$ or $|\beta''|\leq N-n-2$, by bootstrap assumptions, we have
	$$\|\rho(Q_{d\beta'}^{\eta''}(\partial_x \varphi)Y^{\beta''}(\varphi) Y^{\beta'-\beta''}\partial_ {x^j}(f)))\|_{L^1(\mathbb{R}^n_x)}\lesssim \mathcal{F}(t) + (1+\log(1+t))\mathcal{G}(t).$$
	Therefore,
	$$\|P_1\|_{L^1(\mathbb{R}^n_x)} \lesssim \mathcal{F}(t) + (1+t)^{\sigma_0}\mathcal{G}(t).$$
	For $P_2$, by using the form $Z=Y+\varphi\partial_x$, we have
	$$\|P_2\|_{L^1(\mathbb{R}^n_x)}\lesssim \frac{(1+\log(1+t))^N}{t}\mathcal{F}(t).$$
	In summary, we have
	$$\|I_3\|_{L^1(\mathbb{R}^n_x\times\mathbb{R}^n_v)}\lesssim \frac{\epsilon^2}{1+t} + \frac{\epsilon}{1+t}\mathcal{F}(t) + \frac{\epsilon}{(1+t)^{1-\sigma_0}}\mathcal{G}(t) + \frac{\epsilon^{\frac{1}{2}}}{(1+t)^{2-\sigma_0}}\mathcal{F}(t).$$
	
	Now, if $Y^{\alpha}=Y^{\alpha'}\partial_{x^j}$, then $I_3$ becomes
	$$I_3=t\sum_{|\eta|\leq |\alpha|}c_{\eta,il}\partial_{x^i}\partial_{x^l}Z^{\eta}(\phi)Y^{\beta}(f) +\sum_{d=1}^{|\alpha|-1}\sum_{|\eta|\leq |\alpha|} P_{d\eta}^{\alpha,il}(\varphi)\partial_{x^i}\partial_{x^l}Z^{\eta}(\phi)Y^{\beta}(f).$$
	Since $t\partial_{x^l}\in \Gamma$, then the estimate for $I_3$ can be improved by
	$$\|I_3\|_{L^1(\mathbb{R}^n_x\times\mathbb{R}^n_v)}\lesssim \sum_{|\eta|\leq |\alpha| +1} \|\partial_{x^i}Z^{\eta}(\phi)Y^{\beta}(f)\|_{L^1(\mathbb{R}^n_x\times\mathbb{R}^n_v)} + \frac{\epsilon^{\frac{1}{2}}}{(1+t)^{2-\sigma_0}}\mathcal{F}(t),$$
	Therefore, we have the improved estimates
	$$\|I_3\|_{L^1(\mathbb{R}^n_x\times\mathbb{R}^n_v)}\lesssim \frac{\epsilon^2}{(1+t)^2} + \frac{\epsilon}{(1+t)^{2-\sigma_0}}\mathcal{G}(t) + \frac{\epsilon^{\frac{1}{2}}}{(1+t)^{2-\sigma_0}}\mathcal{F}(t).$$
	
	In summary,  for $|\alpha|>N-n-2$, we have
	$$\|T_{\phi}(Y^{\alpha}(\varphi)Y^{\beta}(f))\|_{L^1(\mathbb{R}^n_x\times\mathbb{R}^n_v)}\lesssim \frac{\epsilon^2}{1+t} + \frac{\epsilon}{1+t}\mathcal{F}(t) + \frac{\epsilon}{(1+t)^{1-\sigma_0}}\mathcal{G}(t) + \frac{\epsilon^{\frac{1}{2}}}{(1+t)^{2-\sigma_0}}\mathcal{F}(t).$$
	For $|\alpha|>N-n-3$, we have
	$$\|T_{\phi}(Y^{\alpha}(\partial_x\varphi)Y^{\beta}(f))\|_{L^1(\mathbb{R}^n_x\times\mathbb{R}^n_v)}\lesssim \frac{\epsilon^2}{(1+t)^2} + \frac{\epsilon}{(1+t)^{2-\sigma_0}}\mathcal{G}(t) + \frac{\epsilon^{\frac{1}{2}}}{(1+t)^{2-\sigma_0}}\mathcal{F}(t).$$
	As a consequence, we have
	\begin{align*}
	\|Y^{\alpha}(\varphi(t))Y^{\beta}(f(t))\|_{L^1(\mathbb{R}^n_x\times\mathbb{R}^n_v)} & \lesssim \int_{0}^{t}\|T_{\phi}(Y^{\alpha}(\varphi(s))Y^{\beta}(f(s)))\|_{L^1(\mathbb{R}^n_x\times\mathbb{R}^n_v)}{\rm d}s\\
	& \lesssim \epsilon^2\log(1+t) + \epsilon^{\frac{1}{2}}\int_{0}^{t}\frac{\mathcal{F}(s)}{1+s}{\rm d }s +\epsilon \int_{0}^{t}\frac{\mathcal{G}(s)}{(1+s)^{1-\sigma_0}}{\rm d }s.
	\end{align*}
	\begin{align*}
	\|Y^{\alpha}(\partial_x\varphi(t))Y^{\beta}(f(t))\|_{L^1(\mathbb{R}^n_x\times\mathbb{R}^n_v)} & \lesssim \int_{0}^{t}\|T_{\phi}(Y^{\alpha}(\partial_x\varphi(s))Y^{\beta}(f(s)))\|_{L^1(\mathbb{R}^n_x\times\mathbb{R}^n_v)}{\rm d}s\\
	& \lesssim \epsilon^2+ \epsilon^{\frac{1}{2}}\int_{0}^{t}\frac{\mathcal{F}(s)}{(1+s)^{2-\sigma_0}}{\rm d }s +\epsilon \int_{0}^{t}\frac{\mathcal{G}(s)}{(1+s)^{2-\sigma_0}}{\rm d }s
	\end{align*}
	
	In summary, we have
	\begin{equation}\label{eq.mv.est.F}
	\mathcal{F}(t)\lesssim \epsilon^{\frac{3}{2}}(1+t)^{\sigma_0} + \epsilon^{\frac{1}{2}}\int_{0}^{t}\frac{\mathcal{F}(s)}{1+s}{\rm d }s +\epsilon \int_{0}^{t}\frac{\mathcal{G}(s)}{(1+s)^{1-\sigma_0}}{\rm d }s,
	\end{equation}
	\begin{equation}\label{eq.mv.est.G}
	\mathcal{G}(t)\lesssim \epsilon + \epsilon^{\frac{1}{2}}\int_{0}^{t}\frac{\mathcal{F}(s)}{(1+s)^{2-\sigma_0}}{\rm d }s +\epsilon \int_{0}^{t}\frac{\mathcal{G}(s)}{(1+s)^{2-\sigma_0}}{\rm d }s.
	\end{equation}
	We apply Gronwall's lemma to \eqref{eq.mv.est.F}, then we have
	\begin{equation}
	\mathcal{F}(t)\lesssim\left(\epsilon^{\frac{3}{2}}(1+t)^{\sigma_0}  + \epsilon \int_{0}^{t}\frac{\mathcal{G}(s)}{(1+s)^{1-\sigma_0}}{\rm d }s\right) (1+t)^{C\epsilon^{\frac{1}{2}}}.
	\end{equation}
	We apply this to \eqref{eq.mv.est.G}, then we have
	\begin{align*}
	\mathcal{G}(t) & \lesssim \epsilon  +\epsilon \int_{0}^{t}\frac{\mathcal{G}(s)}{(1+s)^{2-\sigma_0}}{\rm d }s + \epsilon^2\int_{0}^t\frac{1}{(1+s)^{2-2\sigma_0-C\epsilon^{\frac{1}{2}}}}{\rm d}s\\
	&\quad + \epsilon^{\frac{3}{2}}\int_0^t \frac{{\rm d}s}{(1+s)^{2-\sigma_0-C\epsilon^{\frac{1}{2}}}}\int_0^s\frac{\mathcal{G}(\tau)}{(1+\tau)^{1-\sigma_0}}{\rm d }\tau.
	\end{align*}
	The last term
	\begin{align*}
	{\rm Last} & =\epsilon^{\frac{3}{2}}\int_0^t \frac{\mathcal{G}(\tau)}{(1+\tau)^{1-\sigma_0}}{\rm d }\tau\int_\tau^t
	\frac{{\rm d}s}{(1+s)^{2-\sigma_0-C\epsilon^{\frac{1}{2}}}}\\
	& \leq \frac{\epsilon^{\frac{3}{2}}}{1-\sigma_0-C\epsilon^{\frac{1}{2}}} \int_0^t \frac{\mathcal{G}(\tau)}{(1+\tau)^{2-2\sigma_0-C\epsilon^{\frac{1}{2}}}}{\rm d }\tau.
	\end{align*}
	Now we choose $\sigma_0$ and $\epsilon_\sigma$ such that $2\sigma_0 +C\epsilon_\sigma^{\frac{1}{2}}\leq \min\{\frac{1}{2},\sigma\}$, then we have
	$$\mathcal{G}(t)\lesssim \epsilon,\quad \mathcal{F}(t)\lesssim \epsilon(1+t)^{\sigma}.$$

\end{proof}

\subsection{Improving the Bootstrap Assumptions}	

\begin{lemma}
	If the initial data $f_0$ satisfies $\mathcal{E}_N[f_0]\leq \epsilon$, then when $\epsilon$ is small enough, we have that, for all $t\in [0,T]$, 
	$$\mathcal{E}_N[f(t)]\leq \frac{3}{2}\epsilon.$$
\end{lemma}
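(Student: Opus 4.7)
The plan is to apply the approximate conservation law of \lmref{lm.con.law} to each $Y^{\alpha}f$ with $|\alpha|\leq N$, which yields
\begin{equation*}
\|Y^{\alpha}f(t)\|_{L^1(\mathbb{R}^n_x\times\mathbb{R}^n_v)} \leq \|Y^{\alpha}f(0)\|_{L^1(\mathbb{R}^n_x\times\mathbb{R}^n_v)} + \int_0^t\|[T_{\phi},Y^{\alpha}]f(s)\|_{L^1(\mathbb{R}^n_x\times\mathbb{R}^n_v)}\,ds.
\end{equation*}
Since $Y^i=Z^i$ at $t=0$, the initial contribution is bounded by $\epsilon$, and the task reduces to showing that the time integral is at most $\epsilon/2$ when $\epsilon$ is small. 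To handle the commutator I use the higher-order formula \eqref{eq.mv.tphi}, which expresses $[T_{\phi},Y^{\alpha}]f$ as a sum of terms of the form $P_{d\gamma\beta}^{\alpha,i}(\varphi)\,\partial_{x^i}Z^{\gamma}(\phi)\,Y^{\beta}f$ where the signature $k$ of $P$ satisfies $k\leq|\alpha|-1$ and $k+|\gamma|+|\beta|\leq|\alpha|+1\leq N+1$.

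I split the analysis according to the size of $|\gamma|$. When $|\gamma|\leq N-n-1$, the pointwise decay $|\partial_{x^i}Z^{\gamma}\phi|\lesssim\epsilon^{1/2}(1+t)^{-2}$ from bootstrap assumption \eqref{eq.bootstrap.4} is available, and I need to control $\|P_{d\gamma\beta}^{\alpha,i}(\varphi)\,Y^{\beta}f\|_{L^1}$. Within $P$, I isolate the factor $Y^{\alpha_*}\varphi$ of largest order: if every order is $\leq N-n-2$, bootstrap \eqref{eq.bootstrap.2} controls all factors pointwise by $(1+\log(1+t))^{N}$ and the $L^1$ norm is bounded by this polylog times $\mathcal{E}_N[f]\lesssim\epsilon$; otherwise $|\alpha_*|>N-n-2$, but then $|\alpha_*|\leq N-1$ and $|\alpha_*|+|\beta|\leq k+|\beta|+1\leq N+1$, so \lmref{lm.mv.YY} supplies $\|Y^{\alpha_*}(\varphi)\,Y^{\beta}f\|_{L^1}\leq C_{\sigma}(1+t)^{\sigma}\epsilon$ while the remaining $\varphi$-factors stay pointwise $O((1+\log(1+t))^N)$.

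For the complementary regime $|\gamma|>N-n-1$, the constraint $k+|\gamma|+|\beta|\leq N+1$ and $N\geq 2n+3$ force $|\beta|\leq N-n$, so \lmref{lm.mv.grad.term} yields the bilinear bound $\|\partial_{x^i}Z^{\gamma}(\phi)\,Y^{\beta}f\|_{L^1}\lesssim\epsilon(1+t)^{-2}\sum_{|\eta|\leq|\gamma|}\|\rho(Z^{\eta}f)\|_{L^1}$. Expanding $Z^{\eta}$ via \lmref{lm.mv.Z.Y} and reapplying \lmref{lm.mv.YY} together with the bootstrap bounds on $\varphi$ then returns a factor of size $\epsilon(1+t)^{\sigma}$. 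The multilinear prefactor $P_{d\gamma\beta}^{\alpha,i}(\varphi)$, being of signature $\leq N-1$, is bounded pointwise by $(1+\log(1+t))^N$ since in this regime the constraint $k+|\gamma|\leq N+1$ with $|\gamma|>N-n-1$ forces $k<n+2\leq N-n-2$, so \eqref{eq.bootstrap.2} applies to every factor.

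Combining the two cases, after choosing $0<\sigma<1$, every term in the commutator contributes at most $C\epsilon^{3/2}(1+s)^{\sigma-2}$ to the integrand, so
\begin{equation*}
\int_0^t\|[T_{\phi},Y^{\alpha}]f(s)\|_{L^1(\mathbb{R}^n_x\times\mathbb{R}^n_v)}\,ds\;\lesssim\;\epsilon^{3/2}\int_0^{\infty}(1+s)^{\sigma-2}\,ds\;\lesssim\;\epsilon^{3/2},
\end{equation*}
and for $\epsilon$ sufficiently small this is bounded by $\epsilon/2$, giving $\mathcal{E}_N[f(t)]\leq(3/2)\epsilon$. The main obstacle is not any individual estimate but the bookkeeping of signatures, orders and distributions of derivatives across the multilinear prefactor: in each term one must decide which factor of $\varphi$ to pair with $Y^{\beta}f$ to invoke \lmref{lm.mv.YY}, and which to control pointwise via \eqref{eq.bootstrap.2}, while verifying at each step that the index constraints in \lmref{lm.mv.YY} are respected.
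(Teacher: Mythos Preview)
Your proof is correct and follows essentially the same route as the paper's: apply the conservation law to $Y^{\alpha}f$, expand $[T_{\phi},Y^{\alpha}]$ via \eqref{eq.mv.tphi}, split on whether $|\gamma|\leq N-n-1$, and in each branch combine the pointwise bootstrap bounds on $\partial_{x^i}Z^{\gamma}\phi$ or on $P(\varphi)$ with \lmref{lm.mv.YY}, \lmref{lm.mv.grad.term} and \lmref{lm.mv.Z.Y} to extract an integrable rate $\epsilon^{3/2}(1+s)^{-1-\sigma'}$. Two small arithmetic slips do not affect the argument: in the chain ``$|\alpha_*|+|\beta|\leq k+|\beta|+1\leq N+1$'' the ``$+1$'' is superfluous (one has directly $|\alpha_*|\leq k$ and $k+|\beta|\leq N+1$), and the inequality ``$n+2\leq N-n-2$'' fails at the endpoint $N=2n+3$, though what you actually need, $k\leq n+1\leq N-n-2$, does hold there.
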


\begin{proof}
	For any multi-index $\alpha\leq N$, we have
	$$[T_{\phi}, Y^{\alpha}](f)=\sum_{d=0}^{|\alpha|+1}\sum_{i=1}^{n}\sum_{|\gamma|,|\beta|\leq |\alpha|} P_{d\gamma\beta}^{\alpha,i}(\varphi)\partial_{x^i}Z^{\gamma}(\phi) Y^{\beta}(f),$$
	where $P_{d\gamma\beta}^{\alpha,i}(\varphi)$ are multi-linear forms of degree $d$ with signatures less than $k$ satisfying that $k\leq |\alpha|-1$ and $k+|\gamma|+|\beta|\leq |\alpha| +1\leq N+1$.  We have two different cases. When $|\gamma|\leq N-n-1$, then we have
	$$|\partial_{x^i}Z^{\gamma}(\phi)|\leq \frac{\epsilon^{\frac{1}{2}}}{(1+t)^2}.$$
	Since $k+|\beta|\leq N+1$ and $k\leq N-1$, by \lmref{lm.mv.YY}, we have,
	$$\|P_{d\gamma\beta}^{\alpha,i}(\varphi) Y^{\beta}(f)\|_{L^1(\mathbb{R}^n_x\times\mathbb{R}^n_v)}\lesssim (1+\log(t+1))^{N+1}(1+t)^{\sigma}\epsilon.$$
	By taking $\sigma$ small enough, we have that
	$$\|P_{d\gamma\beta}^{\alpha,i}(\varphi)\partial_{x^i}Z^{\gamma}(\phi) Y^{\beta}(f)\|_{L^1(\mathbb{R}^n_x\times\mathbb{R}^n_v)}\lesssim  \frac{\epsilon^{\frac{3}{2}}}{(1+t)^{1+\sigma'}},$$
	where $\sigma'>0$.
	When $|\gamma|>N-n-1$, then $k, |\beta|\leq N-n-2$ since $N\geq 2n+3$, so we have,
	$$|P_{d\gamma\beta}^{\alpha,i}(\varphi)|\lesssim (1+\log(1+t))^{N+1}.$$
	By \lmref{lm.mv.grad.term}, we have
	$$\|\partial_{x^i}Z^{\gamma}(\phi) Y^{\beta}(f)\|_{L^1(\mathbb{R}^n_x\times\mathbb{R}^n_v)}\lesssim  \frac{\epsilon}{(1+t)^2}\sum_{|\eta|\leq |\gamma|}\|Z^{\eta}(f)\|_{L^1(\mathbb{R}^n_x\times\mathbb{R}^n_v)}.$$
	By \lmref{lm.mv.Z.Y}, we have
	$$Z^{\eta}(f)=\sum_{d=0}^{|\eta|}\sum_{|\eta'|\leq |\eta|}P_{d\eta'}^{\eta}(\varphi)Y^{\eta'}(f),$$
	where $P_{d\eta'}^{\eta}(\varphi)$ are multi-linear forms of signature less than $k$ with $k\leq |\eta|-1\leq N-1$ and $k+|\eta'|\leq |\eta|\leq N$. Therefore, by \lmref{lm.mv.YY}, we have
	$$\|Z^{\eta}(f)\|_{L^1(\mathbb{R}^n_x\times\mathbb{R}^n_v)}\lesssim (1+\log(t+1))^{N}(1+t)^{\sigma}\epsilon,$$
	which implies that there exist $\sigma'>0$ such that
	$$\|P_{d\gamma\beta}^{\alpha,i}(\varphi)\partial_{x^i}Z^{\gamma}(\phi) Y^{\beta}(f)\|_{L^1(\mathbb{R}^n_x\times\mathbb{R}^n_v)}\lesssim  \frac{\epsilon^2}{(1+t)^{1+\sigma'}}.$$
	In summary, we have that, there exists $\sigma'>0$, such that
	$$\|T_{\phi}Y^{\alpha}(f)\|_{L^1(\mathbb{R}^n_x\times\mathbb{R}^n_v)}\lesssim \frac{\epsilon^{\frac{3}{2}}}{(1+t)^{1+\sigma'}}.$$
	Therefore, when $\epsilon$ is small enough,
	\begin{align*}
	\mathcal{E}_{N}[f(t)] & \leq \mathcal{E}_{N}[f_0] +\sum_{|\alpha|\leq N}\int_0^{t}\|T_{\phi}Y^{\alpha}(f)\|_{L^1(\mathbb{R}^n_x\times\mathbb{R}^n_v)}\\
	& \leq \epsilon + C\epsilon^{\frac{3}{2}}\int_0^{\infty}\frac{1}{(1+s)^{1+\sigma'}}{\rm d}s\leq \frac{3}{2}\epsilon.
	\end{align*}

\end{proof}

\begin{lemma}
	For all multi-index $\alpha$ with $|\alpha|\leq N-n-1$, we have
	\begin{equation}
	|\nabla_xZ^{\gamma}\phi(t,x)|\lesssim \frac{\epsilon}{(1+t)^2}.
	\end{equation}         
\end{lemma}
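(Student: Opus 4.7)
The plan is to mimic the proofs of Lemma \ref{lm.vp.phiZ} and Lemma \ref{lm.vy.phiZ}, namely to represent $\nabla_x Z^\alpha \phi$ explicitly via the Green's function and then control the resulting $\rho(Z^\beta f)$ for $|\beta|\le|\alpha|$ in $L^\infty$. Since we are in the low-dimensional setting and the vector-field energy is controlled only on the modified fields $Y^\alpha$, the new ingredient is that after invoking Lemma \ref{lm.poisson.eq} or Lemma \ref{lm.screen.eq} we must convert each $Z^\beta$ into a combination of $Y^{\beta'}$'s in order to apply the modified Klainerman-Sobolev inequality \eqref{ks:mvf} together with the energy bootstrap \eqref{eq.bootstrap.1}.

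Concretely, I would first apply Lemma \ref{lm.poisson.eq} (for V-P with $n=3$) or Lemma \ref{lm.screen.eq} (for V-Y with $n=2$) to express
$$
\nabla_x Z^\alpha \phi(t,x) \;=\; \sum_{k,\beta} c_{k,\beta}^{\alpha}\, \nabla_x G_m *^{(k+1)} \rho(Z^\beta f)(t,x),\qquad |\beta|\le|\alpha|.
$$
Next, using Lemma \ref{lm.mv.Z.Y}, I would expand $Z^\beta f = \sum_{d,\beta'} P_{d\beta'}^{\beta}(\varphi)\, Y^{\beta'} f$, where each multilinear form $P(\varphi)$ has total signature at most $|\beta|-1 \le N-n-2$, so that every factor $Y^\eta \varphi$ appearing in $P$ falls in the range of bootstrap \eqref{eq.bootstrap.2} and therefore $|P(\varphi)| \lesssim (1+\log(1+t))^{N}$. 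Integrating in $v$ and combining with \eqref{ks:mvf} applied to each $Y^{\beta'} f$ (which is legal because $|\beta'|+n \le (N-n-1)+n \le N$, so the RHS of the Klainerman-Sobolev bound is controlled by $\mathcal{E}_N[f(t)] \lesssim \epsilon$), I obtain the pointwise control
$$
\bigl|\rho(Z^\beta f)(t,x)\bigr| \;\lesssim\; \frac{\epsilon\,(1+\log(1+t))^{N}}{(1+t+|x|)^{n}}.
$$

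To finish, in the V-P case $n=3$ I would use $|\nabla_x G_0(y)| \lesssim |y|^{-(n-1)}$ and rescale $y=(1+t)y'$ to invoke Lemma \ref{lm.decay}, exactly as in the proof of Lemma \ref{lm.vp.phiZ}; this produces the decay $(1+t)^{-(n-1)} = (1+t)^{-2}$. In the V-Y case $n=2$, I would instead bound $\nabla_x Z^\alpha \phi$ in $L^\infty$ directly: Lemma \ref{lm.V-Y.L1} ensures that convolution with either $G_1$ or $\nabla_x G_1$ is bounded on $L^\infty$, and combining this with the $(1+t+|x|)^{-n} = (1+t+|x|)^{-2}$ decay of $\rho(Z^\beta f)$ yields the same conclusion $\epsilon/(1+t)^{2}$.

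The main technical nuisance, rather than a true obstacle, is the polylogarithmic factor $(1+\log(1+t))^{N}$ coming from bootstrap \eqref{eq.bootstrap.2}; strictly speaking the argument delivers $|\nabla_x Z^\alpha \phi| \lesssim \epsilon(1+\log(1+t))^{N}/(1+t)^{2}$, but this is absorbed into the implicit $\lesssim$ constant when improving bootstrap \eqref{eq.bootstrap.4}, since the target there carries an $\epsilon^{1/2}$ rather than $\epsilon$ and logs are dominated by the $\epsilon^{1/2}/\epsilon$ gap for $\epsilon$ small (alternatively one replaces $(1+\log(1+t))^{N}$ by $(1+t)^{\sigma}$ for arbitrarily small $\sigma>0$ as in Lemma \ref{lm.mv.YY}). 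The threshold $|\alpha|\le N-n-1$ is used exactly twice: to guarantee that the signatures of $P(\varphi)$ lie in the range where \eqref{eq.bootstrap.2} applies, and to leave room for the $n$ additional $Y^\eta$ derivatives required by the Klainerman-Sobolev estimate on $\rho(|Y^{\beta'}f|)$.
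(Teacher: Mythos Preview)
Your overall strategy—Green's function representation, then pointwise control of $\rho(Z^\beta f)$ via the modified Klainerman--Sobolev inequality, and the V-P/V-Y endgame via Lemma~\ref{lm.decay} or Lemma~\ref{lm.V-Y.L1}—matches the paper. However, the conversion from $Z^\beta f$ to the $Y$-picture has a genuine gap.

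You invoke Lemma~\ref{lm.mv.Z.Y} to write $Z^\beta f = \sum P(\varphi)\, Y^{\beta'} f$ and bound $|P(\varphi)|\lesssim (1+\log(1+t))^N$ pointwise via bootstrap~\eqref{eq.bootstrap.2}. This correctly yields
\[
|\rho(Z^\beta f)|\;\lesssim\;\frac{\epsilon\,(1+\log(1+t))^N}{(1+t+|x|)^n},
\]
but the resulting bound $\epsilon(1+\log(1+t))^N/(1+t)^2$ neither proves the lemma as stated nor closes bootstrap~\eqref{eq.bootstrap.4}: to improve~\eqref{eq.bootstrap.4} one must beat $\epsilon^{1/2}/(1+t)^2$ \emph{uniformly in $t$}, whereas $\epsilon(1+\log(1+t))^N$ eventually exceeds $\epsilon^{1/2}$ for any fixed $\epsilon>0$. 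Your absorption claim (``logs are dominated by the $\epsilon^{1/2}/\epsilon$ gap'') is wrong precisely because the log factor depends on $t$, not on $\epsilon$; replacing logs by $(1+t)^\sigma$ runs into the same obstruction.

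The paper avoids this loss by using Lemma~\ref{lm.mv.rho.Z} in place of Lemma~\ref{lm.mv.Z.Y}. The point is that under the velocity average one can decompose $\rho(Z^\alpha f)$ into terms $\rho(Q(\partial_x\varphi)\,Y^{\beta'} f)$—where the multilinear form is in $\partial_x\varphi$, hence uniformly bounded via~\eqref{eq.bootstrap.3} and the second estimate of Lemma~\ref{lm.mv.YY}—plus terms $t^{-j}\rho(P(\varphi)\,Y^{\beta'} f)$, where the log growth of $P(\varphi)$ is killed by the explicit $1/t$ factor. After applying~\eqref{ks:mvf} and handling the at most one high-order factor via Lemma~\ref{lm.mv.YY}, this delivers $|\rho(Z^\alpha f)|\lesssim \epsilon/(1+t+|x|)^n$ with no log loss. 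That refinement—trading $\varphi$ for $\partial_x\varphi$ at the level of velocity averages—is the missing ingredient in your argument.
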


\begin{proof}
	The proof is exactly the same as \lmref{lm.vp.phiZ} and \lmref{lm.vy.phiZ}. For the V-P system in $n=3$, we have that $Z^{\gamma}\phi$ can be written as
	$$\nabla_xZ^{\gamma}\phi(x) = \sum_{|\alpha|\leq |\gamma|}\int_{\mathbb{R}^3}\frac{c_\alpha}{|y|^2}\frac{y}{|y|}\rho(Z^{\alpha}f)(x-y){\rm d}y,$$
	where $c_\alpha$ are some globally bounded constants. Then we have,
	$$|\nabla_xZ^{\gamma}\phi(x) |\lesssim\sum_{|\alpha|\leq |\gamma|} \int \frac{1}{|y|^2}|\rho(Z^{\alpha}f)(x-y)|{\rm d}y.$$
	By \lmref{lm.mv.rho.Z}, we have
	\begin{equation}
	\rho(Z^{\alpha} f) =\sum_{d=0}^{|\alpha|}\sum_{|\beta|\leq |\alpha|}\rho(Q_{d\beta}^{\alpha}(\partial_x \varphi)Y^{\beta} f) +\sum_{j=1}^{|\alpha|}\sum_{d=1}^{|\alpha|+1}\sum_{|\beta|\leq |\alpha|}\frac{1}{t^j}\rho(P_{d\beta}^{\alpha,j}(\varphi) Y^{\beta}(f)),
	\end{equation}
	where $Q_{d\beta}^{\alpha}(\partial_x \varphi)$ are multi-linear forms with respect to $\partial_x \varphi$ of signatures less than $k'$ satisfying $k'\leq |\alpha|-1\leq N-n-2$ and $k' + d +|\beta|\leq |\alpha|$, $P_{d\beta}^{\alpha,j}(\varphi)$ are multi-linear forms of degree $d$ with signatures less than $k$ satisfying $k\leq |\alpha| \leq N-n-1 $ and $k+|\beta|\leq |\alpha|$. Now we apply the Klainerman-Sobolev inequality to every term in the above equation, then we have,
	$$|\rho(Q_{d\beta}^{\alpha}(\partial_x \varphi)Y^{\beta} f)(x-y)|\lesssim \frac{1}{(1+t+|x-y|)^{n}} \sum_{|\eta|\leq n}\|Y^{\eta}[Q_{d\beta}^{\alpha}(\partial_x \varphi)Y^{\beta} f]\|_{L^1(\mathbb{R}^n_x\times\mathbb{R}^n_x)},$$
	$$|\rho(P_{d\beta}^{\alpha,j}(\varphi) Y^{\beta}(f))(x-y)|\lesssim \frac{1}{(1+t+|x-y|)^{n}} \sum_{|\eta|\leq n}\|Y^{\eta}[P_{d\beta}^{\alpha,j}(\varphi) Y^{\beta}(f)]\|_{L^1(\mathbb{R}^n_x\times\mathbb{R}^n_x)}.$$
	Now, since $N\geq 2n+3$, there is at most one multiplier $Y^{\eta'}(\varphi)$ with $|\eta'|>N-n-2$, therefore, by the bootstrap assumption and \lmref{lm.mv.YY}, we have 
	$$|\rho(Z^{\alpha}f)(x-y)|\lesssim \frac{\epsilon}{(1+t+|x-y|)^n} +\frac{\epsilon(\log(1+t)+1)^N}{(1+t+|x-y|)^nt}\lesssim \frac{\epsilon}{(1+t+|x-y|)^n}.$$ 
	By \lmref{lm.decay}, we have 
	$$|\nabla_xZ^{\gamma}\phi(t,x)|\lesssim \frac{\epsilon}{(1+t)^{n-1}}=\frac{\epsilon}{(1+t)^{2}}, \quad n=3.$$
	For the V-Y system, we can do the exactly similar estimate by using the exact expression of $\nabla_x Z^{\gamma} \phi$.

\end{proof}

\begin{lemma}
	For all multi-index $\alpha$ with $|\alpha|\leq N-n-2$, we have
	\begin{equation}
	|Y^{\alpha}\varphi(t,x,v)|\lesssim \epsilon (1+\log(1+t)).
	\end{equation}
	For all multi-index $\alpha$ with $|\alpha|\leq N-n-3$, we have
	\begin{equation}
	|Y^{\alpha}\nabla_x\varphi(t,x,v)|\lesssim \epsilon.
	\end{equation}
\end{lemma}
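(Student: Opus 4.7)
The plan is to integrate the defining equation \eqref{eq.modi.phi.eq} along the characteristics of $T_{\phi}$, exploiting the vanishing of $\varphi_k^i$ at $t=0$ together with the pointwise bound on $\nabla_x Z^{\gamma}\phi$ just established in the preceding lemma. A preliminary observation is that $Y^i|_{t=0}$ agrees with $Z^i|_{t=0}$ (because $\varphi_k^i(0,\cdot,\cdot)\equiv 0$), so a straightforward induction on $|\alpha|$ shows that $Y^{\alpha}\varphi_k^i(0,x,v)=0$ for every multi-index $\alpha$ and every $(x,v)$. Because $T_{\phi}$ is a first-order transport operator, integrating along its characteristics $(X(s),V(s))$ passing through $(t,x,v)$ at time $t$ then yields the pointwise identity
\[
Y^{\alpha}\varphi(t,x,v)=\int_0^t T_{\phi}(Y^{\alpha}\varphi)(s,X(s),V(s))\,ds,
\]
and similarly for $Y^{\alpha}\partial_{x^i}\varphi$. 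Thus everything reduces to pointwise estimates on $T_{\phi}(Y^{\alpha}\varphi)$ and $T_{\phi}(Y^{\alpha}\partial_{x^i}\varphi)$.

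For the first bound, I would decompose $T_{\phi}(Y^{\alpha}\varphi)=[T_{\phi},Y^{\alpha}]\varphi+Y^{\alpha}(T_{\phi}\varphi)$. The commutator is handled using \eqref{eq.mv.tphi}, which expresses it as a sum of terms of the form $P(\varphi)\,\partial_{x^i}Z^{\gamma}(\phi)\,Y^{\beta}(\varphi)$ with $|\gamma|,|\beta|\le|\alpha|\le N-n-2$ and $P$ a multilinear form of bounded signature; the bootstrap assumption \eqref{eq.bootstrap.2} and the improved decay $|\partial_{x^i}Z^{\gamma}\phi|\lesssim \epsilon/(1+t)^2$ proved just above together yield a bound of $\epsilon/(1+t)^{2-\sigma_0}$ for any small $\sigma_0>0$, which is integrable and integrates to $\lesssim\epsilon$. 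For $Y^{\alpha}(T_{\phi}\varphi)$ I use the defining equation $T_{\phi}\varphi_k^i=\mu t\partial_{x^k}(Z^i\phi+c_i\phi)$, the observation that $Y^j t=0$ for every $Y^j\in\gamma_m$ (so that $Y^{\alpha}$ commutes past the scalar factor $t$), and Lemma \ref{lm.mv.Y.na} to expand $Y^{\alpha}\partial_{x^k}(Z^i\phi+c_i\phi)$ as $Z^{\alpha}\partial_{x^k}(Z^i\phi+c_i\phi)$ plus lower-order corrections weighted by multilinear forms in $\varphi$. The leading term contributes $t\,|\partial_{x^k}Z^{\gamma}\phi|\lesssim \epsilon t/(1+t)^2\lesssim\epsilon/(1+t)$, and integrating over $[0,t]$ produces exactly the claimed $\epsilon(1+\log(1+t))$.

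For the refined bound on $Y^{\alpha}\nabla_x\varphi$ with $|\alpha|\le N-n-3$, I would carry out the same characteristic argument for $T_{\phi}(Y^{\alpha}\partial_{x^i}\varphi)=[T_{\phi},Y^{\alpha}\partial_{x^i}]\varphi+Y^{\alpha}\partial_{x^i}(T_{\phi}\varphi)$. The key improvement lies in the source term: $\partial_{x^i}(T_{\phi}\varphi)$ produces a factor $\mu t\partial_{x^i}\partial_{x^k}(Z^i\phi+c_i\phi)$, and since $t\partial_{x^i}\in\Gamma$ this rewrites as $\partial_{x^k}Z^{\bar\gamma}\phi$ with $|\bar\gamma|\le N-n-1$, which is $\lesssim \epsilon/(1+t)^2$ by the preceding lemma. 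The extra spatial derivative therefore produces an additional full power of $1/(1+t)$ compared with the first part, so the integrand is now $\lesssim \epsilon/(1+t)^2$ and $\int_0^t \epsilon/(1+s)^2\,ds\lesssim\epsilon$ without any logarithm. The commutator $[T_{\phi},Y^{\alpha}\partial_{x^i}]\varphi$ is controlled analogously, with every factor of $\partial_{x^i}Z^{\gamma}\phi$ satisfying $|\gamma|\le N-n-1$ and every $\varphi$-multiplier of signature $\le N-n-2$, so that the bootstrap bounds apply uniformly.

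The main technical obstacle will be bookkeeping: one must check that after each expansion through \eqref{eq.mv.tphi} and Lemma \ref{lm.mv.Y.na}, the resulting multilinear forms in $\varphi$ contain at most one factor whose index lies beyond the regime $\{|\cdot|\le N-n-2\}$ in which the pointwise bootstrap \eqref{eq.bootstrap.2} is available, and simultaneously every occurrence of $\partial_{x^i}Z^{\gamma}\phi$ satisfies $|\gamma|\le N-n-1$ so that the improved pointwise decay may be invoked. The index constraints $|\alpha|\le N-n-2$ (respectively $|\alpha|\le N-n-3$), together with the hypothesis $N\ge 2n+3$, are exactly what makes this bookkeeping close.
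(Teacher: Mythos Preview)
Your proposal is correct and follows essentially the same route as the paper: both integrate along the characteristics of $T_{\phi}$, split $T_{\phi}(Y^{\alpha}\varphi)=Y^{\alpha}T_{\phi}(\varphi)+[T_{\phi},Y^{\alpha}]\varphi$, handle the source term via Lemma~\ref{lm.mv.Y.na} and the defining equation \eqref{eq.modi.phi.eq} to extract the leading $\epsilon/(1+t)$ contribution, control the commutator via \eqref{eq.mv.tphi} together with the bootstrap bounds, and obtain the extra power of $(1+t)^{-1}$ in the $\nabla_x$ case by rewriting $t\partial_{x^i}\in\Gamma$. Your explicit remarks that $Y^{\alpha}\varphi|_{t=0}=0$ and $Y^j t=0$ are the only additions, and they are correct.
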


\begin{proof}
	By method of characteristics, we have that
	$$|Y^{\alpha}\varphi(t,x,v)|\leq \int_0^t \|T_\phi Y^{\alpha}(\varphi)(s)\|_{L^\infty(\mathbb{R}^n_x\times\mathbb{R}^n_v)}{\rm d}s.$$
	We just need to estimate $\|T_{\phi}Y^{\alpha}(\varphi)\|_{L^\infty}$. We have that,
	$$T_{\phi}Y^{\alpha}(\varphi)=Y^{\alpha}T_{\phi}(\varphi) + [T_{\phi},Y^{\alpha}](\varphi).
	$$
	For the first part $Y^{\alpha}T_{\phi}(\varphi)$, from \eqref{eq.modi.phi.eq}, we know that
	$$T_{\phi}(\varphi)= t\sum_{i=1}^{n}\sum_{|\eta|\leq 1}c_{Z,i}\partial_{x^i}Z^{\eta}\phi.$$
	Therefore, by \lmref{lm.mv.Y.na} and \lmref{lm.commu.ZZ}, we have 
	$$Y^{\alpha}T_{\phi}(\varphi)=t\sum_{|\eta|\leq |\alpha|+1}c_{\eta,i}\partial_{x^i}Z^{\eta}(\phi) +\sum_{d=1}^{|\alpha|}\sum_{|\eta|\leq |\alpha| +1} P_{d\eta}^{\alpha}(\varphi)\partial_{x^i}Z^{\eta}(\phi),$$
	where $P_{d\eta}^{\alpha}(\varphi)$ are multi-linear forms of degree $d$ with signatures less than $k$ satisfying $k\leq |\alpha|\leq N-n-2$ and $k+|\eta|\leq |\alpha|+1\leq N-n-1$. Then by bootstrap assumptions and the improved estimates for $\partial_{x^i}Z^{\eta}(\phi)$, we have
	$$|Y^{\alpha}T_{\phi}(\varphi)(t)|\lesssim \frac{\epsilon[t+(1+\log (1+t))^{N+1}]}{(1+t)^2}\lesssim \frac{\epsilon}{1+t}.$$
	For the second part $[T_{\phi},Y^{\alpha}](\varphi)$, by \eqref{eq.mv.tphi}, we have
	$$[T_{\phi},Y^{\alpha}](\varphi)=\sum_{d=0}^{|\alpha|+1}\sum_{i=1}^{n}\sum_{|\gamma|,|\beta'|\leq |\alpha|} P_{d\gamma\beta'}^{\alpha,i}(\varphi)\partial_{x^i}Z^{\gamma}(\phi) Y^{\beta'}(\varphi).$$
	Here the multi-linear form $P_{d\gamma\beta'}^{\beta,i}(\varphi)$ has signature less than $k\leq |\alpha|-1\leq N-n-3$ and $k+|\gamma| + |\beta'| \leq |\alpha| +1\leq N -n-1$. Therefore, by bootstrap assumptions, we have
	$$|[T_{\phi},Y^{\alpha}](\varphi)|\lesssim \frac{\epsilon(1+\log (1+t))^{N+1}}{(1+t)^2}\lesssim \frac{\epsilon}{1+t}.$$
	In summary, we have
	$$|Y^{\alpha}\varphi(t,x,v)|\lesssim \int_0^t \frac{\epsilon}{1+s}{\rm d}s \lesssim \epsilon (1+\log (1+t)).$$
	Now if $Y^{\alpha}=Y^{\alpha}\partial_x$, in all the above estimates, the term $\partial_{x^i}Z^{\eta}(\phi)$ will in fact be $\partial_{x^i}\partial_{x^j}Z^{\eta'}(\phi)$, which will provide an additional decay power in $t$ since $\partial_{x^i}\partial_{x^j}Z^{\eta'}(\phi)=t^{-1}\partial_{x^i}(t\partial_{x^j})Z^{\eta'}(\phi)$. So we have,
	$$|Y^{\alpha}\nabla_x\varphi(t,x,v)|\lesssim \int_0^t\frac{\epsilon}{(1+s)^2}{\rm d}s\lesssim \epsilon.$$
\end{proof}	

In summary, we improve the bootstrap assumptions \eqref{eq.bootstrap.1}-\eqref{eq.bootstrap.4} and therefore end the proof of Theorem \ref{thm.low}. 



\section*{Acknowledgments}
This research is funded by the European Research Council under the European Union’s Horizon 2020 research and innovation program (project GEOWAKI, grant agreement 714408). The author would like to thank Jacques Smulevici for many inspiring discussions during the completion of this paper.



\begin{thebibliography}{1}
\addcontentsline{toc}{section}{References}

\bibitem{Bardos.Degond} C. Bardos, P. Degond
{\it Global existence for the Vlasov-Poisson equation in 3 space variables with small initial data,}
{Ann. Inst. H. Poincar\'e Anal. Non Lin\'eaire 2 (1985), no. 2, 101–118. }

\bibitem{Bigorgne.maxwell.high} L. Bigorgne
{\it Asymptotic properties of small data solutions of the Vlasov-Maxwell system in high dimensions,}
{arXiv:1712.09698, 2017. }

\bibitem{Bigorgne.maxwell.maxwell.3d} L. Bigorgne
{\it Sharp asymptotic behavior of solutions of the 3d vlasov-maxwell system with small data,}
{arXiv:1812.11897, 2018. }

\bibitem{Bigorgne.maxwell.massless} L. Bigorgne
{\it Sharp asymptotics for the solutions of the three-dimensional massless Vlasov-Maxwell system with small data,}
{arXiv:1812.09716, 2018. }

\bibitem{Bigorgne.maxwell.asy} L. Bigorgne
{\it Asymptotic properties of the solutions to the Vlasov-Maxwell system in the exterior of a light cone,}
{arXiv:1902.00764, 2019. }

\bibitem{Choi.Ha.Lee.Yukawa} S.-H. Choi, S.-Y. Ha, H. Lee,
{\it Dispersion estimates for the two-dimensional Vlasov-Yukawa system with small data,}
{J. Differential Equations 250 (2011), no. 1, 515–550.}

\bibitem{Fajman.Joudioux.Smulevici.vector-field} D. Fajman, J. Joudioux, J. Smulevici,
{\it A vector field method for relativistic transport equations with applications,}
{Anal. PDE 10 (2017), no. 7, 1539–1612.}

\bibitem{Fajman.Joudioux.Smulevici.vlasov-nordstrom} D. Fajman, J. Joudioux, J. Smulevici,
{\it Sharp asymptotics for small data solutions of the Vlasov-Nordstr\"om system in three dimensions,}
{arXiv:1704.05353, 2017.}

\bibitem{Fajman.Joudioux.Smulevici.Einstein-Vlasov} D. Fajman, J. Joudioux, J. Smulevici, 
{\it The Stability of the Minkowski space for the Einstein-Vlasov system,}
{arXiv:1707.06141, 2017.}

\bibitem{Glassey.book} R.T. Glassey,
{\it The Cauchy problem in kinetic theory,}
{Society for Industrial and Applied Mathematics (SIAM), Philadelphia, PA, 1996. xii+241 pp.}

\bibitem{Hwang.Rendall.Velazquez} H. Hwang, A. Rendall, J. Vel\'azquez, 
{\it Optimal gradient estimates and asymptotic behaviour for the Vlasov-Poisson system with small initial data,}
{Arch. Ration. Mech. Anal. 200(1), 313–360 (2011).}

\bibitem{Klainerman} S. Klainerman,
{\it Uniform decay estimates and the Lorentz invariance of the classical wave equation,}
{Comm. Pure Appl. Math. 38 (1985), no. 3, 321–332.}

\bibitem{Lindblad.Taylor}H. Lindblad, M. Taylor,
{\it Global stability of Minkowski space for the Einstein-Vlasov system in the harmonic gauge,}
{arXiv:1707.06079, 2017.}

\bibitem{Lions.Perthame} P.-L. Lions, B. Perthame,
{\it Propagation of moments and regularity for the 3-dimensional Vlasov-Poisson system,}
{Invent. Math. 105(2), 415–430 (1991).}

\bibitem{Pfaffelmoser} K. Pfaffelmoser,
{\it Global classical solutions of the Vlasov-Poisson system in three dimensions for general initial data,}
{J. Differential Equations 95 (1992), no. 2, 281–303.}

\bibitem{Smulevici.V-P} J. Smulevici,
{\it Small data solutions of the Vlasov-Poisson system and the vector field method,}
{Ann. PDE 2 (2016), no. 2, Art. 11, 55 pp.}


\bibitem{Wang} X. Wang,
{\it Decay estimates for the 3D relativistic and non-relativistic Vlasov-Poisson systems,}
{arXiv:1805.10837, 2018.}

\bibitem{Bessel} G. N. Watson, 
{\it A treatise on the theory of Bessel functions,}
{2nd ed. reprint, Camb. Math. Lib., Cambridge Univ. Press, Cambridge, 1995.}

\bibitem{Yukawa} H. Yukawa, 
{\it On the interaction of elementary particles,}
{Proc. Phys. Math. Soc. Japan 17 (1935) 48–57.}


	
\end{thebibliography}
\end{document}